\newcommand{\inlineoverset}[2]{%
  {\mathop{#2}^{\vbox to 2.5\ex@{\kern-\tw@\ex@
   \hbox{\scriptsize \ensuremath{#1}}\vss}}}}
\numberwithin{equation}{section}
\theoremstyle{plain}
\newtheorem{theorem}{Theorem}[section]
\newtheorem{lemma}{Lemma}[section]
\newcommand{\I}{\ensuremath{\operatorname{\mathbb{I}}}}
\newcommand{\cov}{\ensuremath{\mathrm{cov}}}
\newcommand{\E}{\ensuremath{\operatorname{E}}}
\newcommand{\D}{\ensuremath{\operatorname{D}}}
\newcommand{\var}{\ensuremath{\operatorname{var}}}
\def\mybigx#1{\dimen@#1\relax
\mathchoice
{\vbox to \dimen@{}}%
{\vbox to \dimen@{}}%
{\vbox to .7\dimen@{}}%
{\vbox to .5\dimen@{}}}%
\def\mybig#1{{\hbox{$\left#1\mybigx{0.8em}\right.\n@space$}}}
\begin{document}

\begin{frontmatter}

\title{Nonparametric graphon estimation}
\runtitle{Nonparametric graphon estimation}

\begin{aug}
\author{\fnms{Patrick J.} \snm{Wolfe}\ead[label=e1]{p.wolfe@ucl.ac.uk, s.olhede@ucl.ac.uk}}
\and
\author{\fnms{Sofia C.} \snm{Olhede}}

\runauthor{Wolfe \& Olhede}

\affiliation{University College London}

\address{Department of Statistical Science\\
University College London\\
Gower Street\\
London WC1E 6BT, UK\\
\printead{e1}}

\end{aug}

\begin{abstract}
We propose a nonparametric framework for the analysis of networks, based on a natural limit object termed a graphon. We prove consistency of graphon estimation under general conditions, giving rates which include the important practical setting of sparse networks. Our results cover dense and sparse stochastic blockmodels with a growing number of classes, under model misspecification. We use profile likelihood methods, and connect our results to approximation theory, nonparametric function estimation, and the theory of graph limits. 
\end{abstract}

\begin{keyword}[class=AMS]
\kwd[Primary: ]{62G05; }
\kwd[secondary: ]{05C80, 62G20}
\end{keyword}

\begin{keyword}
\kwd{graph limits; nonparametric regression; sparse random graphs; statistical network analysis; stochastic blockmodels}
\end{keyword}

\end{frontmatter}

\section{Introduction}

Networks are fast becoming part of the modern statistical landscape \citep*{Durrett, diaconis2007graph, bickel2009nonparametric, choi2012stochastic, fienberg2012brief, zhao2012consistency, arias2013cluster, ball2013network, choi2012co}. Yet we lack a full understanding of their large-sample properties in all but the simplest settings, hindering the development of models and inference tools that admit theoretical performance guarantees. 

In this article we introduce a nonparametric framework for the analysis of networks, which relates to kernel-based random graph models \citep{janson2010asymptotic, sussman2013universally}, stochastic blockmodels \citep{airoldi2008mixed, rohe2011spectral}, and degree-based models \citep{chatterjee2011random, BickelLevina}.  We use this framework to establish consistency of likelihood-based network inference under general conditions, and to show convergence rates across a range of network regimes, from dense to sparse.  Our framework thus addresses one of the biggest factors limiting the use of statistical network models in practice: a lack of flexible and transparent analysis tools that admit coherent statistical interpretations \citep{fienberg2012brief}.

Our methodology derives from a large-sample theory tailored to network data, in which well-defined limiting objects play a role akin to the infinite-dimensional functions that underpin classical nonparametric statistics \citep{bickel2009nonparametric}. An exchangeable stochastic network can be viewed as a partial observation of this limiting object under Bernoulli sampling \citep{diaconis2007graph}. Hence our theory is closely related to that of generalized linear models \citep{green1994nonparametric} and of contingency tables \citep{fienberg2012maximum}, as well as to nonparametric function approximation. High-dimensional statistical theory in this setting is nascent, and so the linkages we develop below provide for a foundational understanding of nonparametric statistical network analysis.

\section{Model elicitation}\label{model-elicit}

A network can be represented by an $n \times n$ data matrix $A$, whose $ij$th entry describes the relation between node $i$ and node $j$ of the network. In the most fundamental setting of graph theory, $A$ is a symmetric, binary-valued contingency table: it is sparse yet structured, with $A_{ij} \in \{0,1\}$ denoting the absence or presence of an edge between nodes $i$ and $j$, and with fixed, structural zeros along the main diagonal. 

We call $A$ an adjacency matrix, and model it as a realization of $\binom{n}{2}$ independent Bernoulli trials. Independently for $1 \leq i < j \leq n$, we have 
\begin{equation}
\label{eq:BernoulliModel}
A_{ij} \,\vert\, p_{ij} \sim \operatorname{Bernoulli}(p_{ij}), \qquad A_{ji} = A_{ij}, \quad A_{ii} = 0 .
\end{equation}
Each Bernoulli trial $ A_{ij} $ has success probability $p_{ij}$, which in turn we model using a bivariate function termed a \emph{graphon} that derives from the theory of graph limits \citep{lovasz2012large}. 

A graphon is a nonnegative symmetric function, measurable and bounded, that represents a discrete network as an infinite-dimensional analytic object. It is a basic characterization, allowing us to go from the discrete set of probabilities $ \{ p_{ij} \}_{i<j}$ to a limit object $f\left(x,y\right)$ defined on $(0,1)^2$, independently of the network size. Various summaries of the network can be calculated as functionals of the graphon; for example, a network's degree distribution is characterized by its graphon marginal $ \int_0^1 f\left(\cdot,y\right) \,dy$.

To model both dense and sparse networks, we allow the success probabilities $p_{ij}$ appearing in~\eqref{eq:BernoulliModel} to depend on $n$. We link these to a scaled graphon $\rho_n f\left(x,y\right)$ through a random sample $ \{\xi_i \}_{i=1}^n $ of uniform variates, via a scale parameter $ \rho_n >0 $ that specifies the expected probability of a network edge:
\begin{equation}
\label{pij-model}
p_{ij} = \rho_n f\left(\xi_i,\xi_j\right) ; \,\, \left\{\xi_1, \ldots, \xi_n\right\} \overset{\mathit{iid}}{\sim} \operatorname{Uniform}(0,1),
\,\, \textstyle \iint f\left(x,y\right) \, dx \, dy = 1 . \!\!\!\!
\end{equation}
Observe that $ \E A_{ij} = \E_\xi p_{ij} = \rho_n $ for all $ 1 \leq i < j \leq n $, and so $ \rho_n $ specifies the sparsity of the generated network. We assume the sequence $\{ \rho_n \}_{n = 2, 3, \ldots}$ to be fixed and monotone non-increasing.

This is a canonical model based on exchangeable random networks \citep{bickel2009nonparametric, BickelLevina}, and is also strongly related to other statistical modeling paradigms. It relates the infinite-dimensional graphon $f\left(x,y\right)$ to the set of probabilities $\{ p_{ij} \}_{i<j} $ sampled via $\xi$. This modeling strategy is similar to time series analysis, where a sampled autocovariance is related to an infinite-dimensional spectral representation. As with an independent increments process, we may think of each $\xi_i$ in~\eqref{pij-model} as a latent variable. Furthermore, $\xi_i$ is associated with the $i$th network node, acting as a latent random index into the graphon. This reflects the fact that the observed ordering of the network nodes conveys no information. 

Similarly, the ordering of a given graphon $f\left(x,y\right)$ along the $x$ and $y$ axes has no inherent meaning; that is, $f\left(x,y\right)$ has a built-in invariance to ``rearrangements'' of the $x$ and $y$ axes. This is similar to statistical shape analysis, where we seek to describe objects in a manner that is invariant to their orientation in Euclidean space. Thus $f\left(x,y\right)$ represents an equivalence class of all symmetric functions that can be obtained from one another through measure-preserving transformations of $[0,1]$. 

This notion was formalized by \citet{aldous1981representations} and \citet{hoover1979relations} in the context of exchangeable infinite arrays. Their eponymous theorem asserts that any such array admits a representation in terms of some $f(x,y,\alpha)$. This representation is unique up to measure-preserving transformation \citep{diaconis2007graph}, and the value of $\alpha$ is not identifiable from a single network observation \citep{bickel2009nonparametric}. The Aldous--Hoover representation thus relates~\eqref{pij-model} to an exchangeable infinite array $\{A_{ij}\}_{i,j=1}^\infty$ of binary random variables, such that for all $n = 1, 2, \ldots$, all permutations $\Pi$ of $\{ 1,\ldots,n \}$ and all $a \in \{0,1\}^{n \times n}$, we have that $\Pr(A_{ij} = a_{ij}, 1\leq i<j \leq n) = \Pr(A_{ij} = a_{\Pi(i)\Pi(j)}, 1\leq i,j \leq n)$. 

By putting an observed $n \times n$ adjacency matrix $A$ in correspondence with a finite set of rows and columns of $\{A_{ij}\}_{i,j=1}^\infty$, we arrive at a model for exchangeable networks, or for sub-networks thereof. Exchangeability implies that once we condition on the latent variable $\xi_i$ associated to network node $i$, then all linkages $ A_{i\cdot} $ to node $i$ are conditionally independent and identically distributed. This follows from de~Finetti's representation of a sum of exchangeable indicator variables \citep{diaconis1977finite}.

\section{Main result}\label{sec:mainresult}

Our main result is that whenever a graphon $f$ is H\"older continuous, and maximum likelihood fitting is used to derive a nonparametric estimator of $f$ from $A$, then this estimator will be consistent as long as $\rho_n = \omega \bigl( n^{-1} \log^3 n \bigr)$, and its rate of convergence can be established. 

To construct our estimator, we will calculate group averages after forming $k$ groups from $n$ nodes. Any such grouping can be represented as an integer partition of $n$ via a vector $ h \in \{ 2, \ldots, n \}^k $, such that $ \smash{ \sum_{a=1}^k } h_a = n $. Thus may view $ n^{-1} h $ as the probability mass function of a random variable with range $\left\{ 1, \ldots, k \right\}$, indexed via a cumulative distribution function $ H $ and its generalized inverse $ H^{-1} $:
\begin{subequations}
\begin{align}
\label{H}
H(u) & = \frac{1}{n} \sum_{a=1}^{ \lfloor u \rfloor } h_a ; \quad u\in \left[ 0 , k \right] , \,\, H(u) \in \left\{ 0, \tfrac{h_1}{n} , \tfrac{h_1+h_2}{n}, \ldots, 1 \right\} ,
\\ \label{Hinv} H^{-1}(x) & = \inf_{ u \in \left[ 0 , k \right] }\left\{ H(u)\ge x\right\}; \quad x\in \left( 0 , 1 \right], \,\, H^{-1}(x) \in \left\{ 1, \ldots, k \right\} .
\end{align}
\end{subequations}

The central difficulty in constructing a nonparametric graphon estimator is that we do not know the ordering of our observed adjacency matrix $A$, relative to the ordered sample $\{ \xi_{(i)} \}_{i=1}^n$ indexing the graphon $f$. We thus define an estimator $\smash{ \hat f }$ as a composition of two operations: first we re-index the rows and columns of $A$ according to some permutation $\Pi$ of $\{ 1, \ldots, n \}$, and then we group them in accordance with $H$:
\begin{align*}
\hat f\left( x , y ; h \right) = \hat \rho_n^{-1} \bar A_{ H^{-1}(x) H^{-1}(y) },
\quad \hat \rho_n = \tbinom{n}{2}^{-1} \sum_{i<j} A_{ij} , \quad & (x,y) \in \left( 0, 1 \right)^2;
\\ \bar A_{ab} = \frac{ 1 }{ h_a \left\{ h_b - \I\left(a=b\right) \right\} } \sum_{ j = n H(b-1) + 1 }^{ n H(b) } \sum_{ i = n H(a-1) + 1 }^{ n H(a) } A_{ \Pi(i) \Pi(j) } , \quad & 1 \leq a,b \leq k.
\end{align*}

We then define the mean-squared error of $ \hat f $ relative to $f$ as
\begin{equation*}
\inf_{\sigma\in \mathcal{M} } \iint_{(0,1)^2} \bigl| f\bigl( \sigma(x),\sigma(y) \bigr) - \hat f\left(x,y;h\right) \bigr|^2 \, dx \, dy ,
\end{equation*}
where $\mathcal{M}$ is the set of all measure-preserving bijections of the form $ \sigma \colon [0,1] \to [0,1]$. This error criterion is based on the so-called cut distance in the theory of graph limits \citep{lovasz2012large}, and allows for all possible rearrangements of the axes of $f$ \citep{choi2012co}.

Any estimator $ \hat f $ can be viewed as a Riemann sum approximation of $f$, and thus we must understand when such sums converge. Lebesgue's criterion asserts that a bounded graphon on $(0,1)^2$ is Riemann integrable if and only if it is almost everywhere continuous. A sufficient condition is that $f$ is $\alpha$-H\"older continuous for some $0 < \alpha \leq 1$, where we write
\begin{equation}
\label{eq:Holder-defn}
f \in \operatorname{\textrm{H\"older}}^\alpha(M) \Leftrightarrow \sup_{(x,y) \neq (x',y') \in (0,1)^2} \frac{ \left| f\left(x,y\right) - f\left(x',y'\right) \right| }{ \left| \left(x,y\right) - \left(x',y'\right) \right|^\alpha } \leq M < \infty .
\end{equation}
This assumption ensures that $f$ is uniformly continuous, so that its approximation error can be controlled through Riemann sums.

Under this model specification, we obtain our main result, which we prove in Appendix~\ref{ap-graphcon}.

\begin{theorem}[Consistency of smooth graphon estimation]\label{graphconst}
Assume a sequence of graphon estimators $ \hat f\left( x , y ; h\right) $ is fitted under the model of~\eqref{pij-model}, with $k = \omega(1)$ and $\bar h = n / k$ the average group size, where
\begin{enumerate}

\item \label{cond:Holder1} The graphon $f$ is symmetric, bounded away from zero and $\alpha$-H\"older continuous, $ 0 < \alpha \leq 1$; 

\item The scaling sequence $ \rho_n $ satisfies $ \rho_n = \omega\bigl( n^{-1} \log^3 n \bigr)$, and $\max_n \rho_n f$ is bounded away from unity;

\item Every admissible partition $H$ has group sizes bounded uniformly above and below by $ h_\vee = o(n)$, $ h_\wedge = \omega( \log^{1/2} n ) $, and may be composed with any permutation $ \Pi $ of $\{1,\ldots,n\}$ to yield $ \hat f\left( x , y ; h \right) $.

\end{enumerate}

Suppose furthermore that the minimum effective sample size of every possible fitted grouping, $ \binom{ h_\wedge }{ 2 } \rho_n $, and the average effective sample size across all groupings, $ \bar h^2 \rho_n$, both grow sufficiently rapidly in $n$:
\begin{equation*}
h_\wedge^2 \rho_n 
= \omega \bigl( \log n \bigr) , \qquad
\bar h^2 \rho_n
= \omega \bigl( \max \left\{ \bar h^2 / n , 1 \right\} \log^3 n \bigr) .
\end{equation*}
Then if $ \hat f\left( x , y ; h \right) $ is fitted by blockmodel maximum profile likelihood estimation as described in Section~\ref{sec:blockmodel-approx} below, the mean-squared error of $\hat f$ satisfies
\begin{equation*}
{\cal O}_P \left( \frac{ \log \bar h }{ \bar h^2 \rho_n } + \sqrt{ \frac{ \log^2 \left( 1 / \rho_n \right) \log \left( n / \bar h \right) }{ n \rho_n } } + \left( \frac{ h_\vee }{ n } \right)^{2\alpha} + \frac{ \log \left( h_\vee / \rho_n \right) }{ n^{ \alpha / 2 } } \right) .
\end{equation*}
\end{theorem}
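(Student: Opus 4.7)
The plan is a bias--variance--permutation decomposition of the squared error. Writing $\bar f_{ab}(\Pi,H) = (h_a h_b)^{-1} \sum_{i \in I_a} \sum_{j \in I_b} f(\xi_{\Pi(i)},\xi_{\Pi(j)})$ for the block averages of $f$ induced by a permutation $\Pi$ and partition $H$, and letting $\Pi^\star$ denote the oracle permutation that sorts nodes according to $\xi_i$, I would split the mean-squared error (after composing with a measure-preserving $\sigma$) into four contributions: (i) the pure approximation error of the blockmodel $\bar f(\Pi^\star,H)$ to $f$; (ii) the sampling fluctuation of $\bar A_{ab}$ around its conditional mean $\rho_n \bar f_{ab}$ given the latents; (iii) the fluctuation of $\hat\rho_n$ around $\rho_n$; and (iv) the excess loss from using the profile-MLE pair $(\hat\Pi,\hat H)$ in place of $(\Pi^\star,H)$.

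For (i), the $\alpha$-H\"older condition combined with consecutive order-statistic spacings of order $h_\vee/n$ (up to a Dvoretzky--Kiefer--Wolfowitz correction negligible in the stated regime) yields $|f - \bar f(\Pi^\star,H)| \leq M (h_\vee/n)^\alpha$ pointwise on each block, producing the $(h_\vee/n)^{2\alpha}$ term after integration. For (ii), conditional on the latents each $\bar A_{ab}$ is an average of at least $\binom{h_\wedge}{2}$ independent Bernoullis of mean at most $\rho_n$; Bernstein's inequality combined with a union bound over the $k^2 = (n/\bar h)^2$ blocks yields deviations of order $\sqrt{\rho_n \log \bar h / \bar h^2}$, which upon rescaling by $\hat\rho_n^{-1}$ and squaring produces the $\log \bar h / (\bar h^2 \rho_n)$ term. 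For (iii), a Bernstein bound on $\hat\rho_n$ gives $|\hat\rho_n - \rho_n|/\rho_n = O_P\bigl(\sqrt{\log(1/\rho_n)/(n^2 \rho_n)}\bigr)$; propagating this through the multiplicative rescaling and combining with the other error terms contributes the second term of the stated rate.

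The main obstacle is step (iv). The strategy is to express the profile log-likelihood as a Bregman-type divergence between $\bar A(\Pi,H)$ and $\rho_n \bar f(\Pi,H)$ (cf.\ Section~\ref{sec:blockmodel-approx} for the blockmodel approximation), and then to establish a concentration bound holding simultaneously over all admissible $(\Pi,H)$ that shows the profile MLE attains empirical risk within a slack of order $\log(h_\vee/\rho_n)/n^{\alpha/2}$ of the oracle blockmodel. The key difficulty is taming the $\log(n!) \asymp n \log n$ cost of a naive union bound over permutations; I would exploit (a) the H\"older regularity, which shrinks the effective metric entropy of essentially distinct permutations modulo the equivalence class $\mathcal M$, (b) the lower bound $h_\wedge = \omega(\log^{1/2} n)$, which provides the concentration slack needed inside each block, and (c) the sparsity assumption $\rho_n = \omega(n^{-1}\log^3 n)$, which guarantees that the aggregate signal dominates the logarithmic penalties arising in the uniform bound. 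Assembling the four contributions then yields the stated rate.
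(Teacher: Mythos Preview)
Your decomposition misallocates the four rate terms, and step~(iv) has a real gap. In the paper, the second term $\sqrt{\log^2(1/\rho_n)\log(n/\bar h)/(n\rho_n)}$ does \emph{not} come from $\hat\rho_n$; Lemma~\ref{rhohat} shows $\hat\rho_n/\rho_n = 1 + \mathcal{O}_P(n^{-1/2})$, which is negligible. That term, together with the first term $\log\bar h/(\bar h^2\rho_n)$, comes from Theorem~\ref{excess}: a uniform control of the Kullback--Leibler excess risk over \emph{all} $z\in\mathcal{Z}_k$, obtained by a brute-force union bound with $\log|\mathcal{Z}_k|\le n\log k$ (no H\"older input). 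Your proposal to tame the permutation cost by using H\"older regularity to shrink the metric entropy of $\mathcal{Z}_k/\mathcal{M}$ does not work as stated: smoothness of $f$ constrains the oracle blockmodel (Theorem~\ref{absolute}) but does nothing to reduce the number of combinatorially distinct assignments the MPLE must compete against. The correct mechanism is that $n\log k$ is beaten by the aggregate signal $\binom{n}{2}\rho_n$ under the stated sparsity condition, so no entropy reduction is needed.

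Conversely, the fourth term $\log(h_\vee/\rho_n)/n^{\alpha/2}$ is \emph{not} an MPLE excess at all. It arises in Lemma~\ref{graphconst-D} when converting the integral $\iint \rho_n^{-1}\D\{\rho_n f(\sigma(x),\sigma(y))\,\|\,\rho_n\hat f\}\,dx\,dy$ to the discrete sum $\sum_{i<j}\D(p_{ij}\,\|\,\bar A_{\hat z_i\hat z_j})$: one Taylor-expands the divergence in its first argument, using $|f(\xi_{(i)},\xi_{(j)})-f(i/(n{+}1),j/(n{+}1))|=\mathcal{O}_P(n^{-\alpha/2})$ from order-statistic convergence, and the $\log(h_\vee/\rho_n)$ factor appears because the derivative involves $\log\bar A_{ab}$, which on non-saturated blocks is bounded only by $\log\binom{h_\vee}{2}$ and $\log(1/\rho_n)$. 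Your plan never isolates this discretization step; it is where the paper converts squared error to KL via $|f-g|^2\le 2f\,\rho_n^{-1}\D(\rho_n f\,\|\,\rho_n g)$ and then passes from the continuum to the ordered sample. Without it, the $n^{-\alpha/2}$ scale has no source in your argument.
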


The terms appearing in this expression each stem from a different portion of the nonparametric inference problem of graphon estimation, and will be derived and discussed in Section~\ref{sec:BlockmodelCons}--\ref{sec:rates} below.

\section{Nonparametric graphon approximation via blockmodels}
\label{sec:blockmodel-approx}

To understand Theorem~\ref{graphconst}, we must first describe how a particular class of statistical network model---the stochastic blockmodel---lends itself naturally to nonparametric approximation. Later, in Section~\ref{sec:BlockmodelCons}, we will establish blockmodel consistency under model misspecification, in settings ranging from dense \citep{chatterjee2012matrix, choi2012co} to very sparse networks.

\subsection{Stochastic blockmodels and nonparametric graphon approximation}

A $k$-community blockmodel $( k, z, \theta )$ is a statistical network model that consists of two main components:
\begin{enumerate}
\item \emph{A community assignment function} $z \colon \left\{1, \ldots, n \right\} \to \left\{1, \ldots, k \right\} $. This mapping assigns each of $n$ network nodes to exactly one of $k$ groupings or ``communities,'' each of size $ h_a , 1 \leq a \leq k $.
\item \emph{A block mean estimator} $ \theta \colon \left\{1, \ldots, k \right\}^n \times \left[0,1\right]^{n \times n} \to \left[0,1\right]^{k \times k}$. This assigns an interaction rate $ \theta_{ab} $ to every pair $(a,b)$ of communities, based on the observations $\left\{ A_{ij} : i \in z^{-1}(a) , j \in z^{-1}(b) \right\}$.
\end{enumerate}

Any community assignment function $z$ thus has two components: a vector $h(z) = \left( h_1, \ldots, h_k \right)$ of community sizes equivalent to some $H$ as defined in~\eqref{H}, and a permutation $ \Pi_z $ of $ \left\{ 1, \ldots, n \right\} $ that re-orders the set of network nodes prior to applying the quantile function $H^{-1}(\cdot /n)$ as defined in~\eqref{Hinv}. Thus the community to which $z$ assigns node $i$ is determined by the composition $ H^{-1} \circ \Pi_z$:
\begin{equation}
\label{eq:z-h}
z_i = H^{-1} \left\{ \Pi_z(i) / n \right\} , \quad 1 \leq i \leq n .
\end{equation}
Each $z$ thus represents a re-ordering of the network nodes, followed by a partitioning of the unit interval. Each $\theta_{ab}$ in turn describes the expected rate of interaction between the nodes in communities $a$ and $b$. 

If $k$ grows with $n$, then the nonparametric properties of blockmodels come to the fore \citep{rohe2011spectral, choi2012stochastic, fishkind2013consistent, zhao2012consistency}. In the theory of graph limits \citep{lovasz2012large}, such a model is known as the ``blowup'' of a weighted graph to the domain $(0,1)^2$, or as a ``stepfunction approximation'' of a given graphon $f\left(x,y\right)$.

There are strong theoretical reasons why an arbitrary graphon should be well approximated by blocks \citep{lovasz2012large}. These reasons stem from a fundamental result in combinatorics known as Szemer\'edi's regularity lemma, which cuts across graph theory, analysis and number theory. In our context, this lemma suggests that any sufficiently large graph behaves approximately like a $( k, z, \theta )$-blockmodel for some $k$. However, this value of $k$ may potentially be very large, and so regularizing strategies are needed to infer a blockmodel approximation with good risk properties while requiring relatively few degrees of freedom.

\subsection{Fitting blockmodels to inhomogeneous random graphs}

Once $f\left(x,y\right)$ has been specified and a uniform random sample $ \{ \xi_i \}_{i=1}^n $ realized, our network reduces to a set of $\binom{n}{2}$ $\operatorname{Bernoulli}( p_{ij} )$ trials that are conditionally independent given $ \{ \xi_i \}_{i=1}^n $. We refer to this as an \emph{inhomogeneous random graph} model \citep{bollobas2007phase} for the observed data matrix $A \in \{0,1\}^{n \times n}$. From~\eqref{pij-model}, the conditional log-probability of observing a given adjacency matrix $A$ is
\begin{equation*}
\log \Pr(A \,\vert\, \{ p_{ij} \}_{i<j} ) = \sum_{i<j} \left\{ A_{ij} \log\left( p_{ij} \right) + \left( 1 - A_{ij} \right) \log\left( 1 - p_{ij} \right) \right\} .
\end{equation*}
 
Adopting the notation of \citet*{choi2012stochastic}, we write the log-likelihood function of a blockmodel $( k, z, \theta )$ with respect to an observed data matrix $A$ as
\begin{align}
\nonumber
L(A;z,\theta) & = \sum_{ i < j } \left\{ A_{ij} \log \theta_{z_i z_j} + \left( 1 - A_{ij} \right) \log \left( 1 - \theta_{z_i z_j} \right) \right\} , \quad 1 \leq i,j \leq n
\\ \nonumber & = \sum_{ a \leq b } \mathop{\sum_{\,\,i \in z^{-1}(a),}}_{j \in z^{-1}(b)} \left\{ A_{ij} \log \theta_{ab} + \left( 1 - A_{ij} \right) \log \left( 1 - \theta_{ab} \right) \right\} , \quad 1 \leq a,b \leq k
\\ \nonumber & = \sum_{ a \leq b } \log \theta_{ab} \mathop{\sum_{\,\,i \in z^{-1}(a),}}_{j \in z^{-1}(b)} A_{ij} + \sum_{ a \leq b } \log \left( 1 - \theta_{ab} \right) \mathop{\sum_{\,\,i \in z^{-1}(a),}}_{j \in z^{-1}(b)} \left( 1 - A_{ij} \right)
\\ \label{eq:blockmodelLL} & = \sum_{ a \leq b } h_{ab}^2 \left\{ \bar A_{ab} \log \theta_{ab} + \left( 1 - \bar A_{ab} \right) \log \left( 1 - \theta_{ab} \right) \right\} ,
\end{align}
where $ \bar A_{ab} $ is the arithmetic average of the values of $A$ in the $(a,b)$th block:
\begin{equation}
\label{eq:hab}
\bar A_{ab} = \frac{ 1 }{ h_{ab}^2 } \mathop{\sum_{\,\,i \in z^{-1}(a),}}_{j \in z^{-1}(b)} A_{ij} , 
\qquad h_{ab}^2 =
\begin{cases}
\binom{ h_a }{ 2 } & \text{if $ a = b $,}
\\ h_a h_b & \text{if $ a \neq b $.}
\end{cases}
\end{equation}
and $h_a$ is the size of the $a$th community. Note that this aligns with our earlier definition of $\hat f$, and that the quantities $ h_{ab}^2 , \bar A_{ab} , \theta_{ab} $ all depend on the community assignment function $z$. The structural zeros along the main diagonal of $A$ imply that $h_{ab}$ differs for diagonal blocks ($a=b$) relative to off-diagonal blocks. 
We see from~\eqref{eq:blockmodelLL} that for any fixed assignment $z \in \{1,\ldots,k\}^n$, the log-likelihood $ L(A;z,\theta) $ of $A$ will be maximized in $\theta \in [0,1]^{k \times k}$ by taking $\theta_{ab} = \bar A_{ab}$. This is because each sample proportion $ \bar A_{ab} $ is an extended maximum likelihood estimator for its expectation; ``extended'', because we include the boundary $ \{0,1\}^{k \times k}$ of the parameter space, allowing for the possibility that $\theta_{ab} = \bar A_{ab} \in \{0, 1 \}$. Thus the extended maximum likelihood estimator coincides with the method of moments estimator for $\theta_{ab}$.

Note that~\eqref{eq:blockmodelLL} is a continuous function in $\theta$, and so (by the extreme value theorem) $L(A;z,\theta)$ attains its supremum over the compact set $[0,1]^{k \times k}$. Thus we ``profile out'' $\theta$ from the log-likelihood $L(A;z,\theta)$:
\begin{align}
\nonumber
L(A;z) & = \max_{ \theta \in [0,1]^{ k \times k } } L(A;z,\theta)
\\ \nonumber & = \sum_{ a \leq b } h_{ab}^2 \left\{ \bar A_{ab} \log \bar A_{ab} + \left( 1 - \bar A_{ab} \right) \log \left( 1 - \bar A_{ab} \right) \right\}
\\ \label{eq:blockmodelPL} & = \sum_{ i < j } \left\{ A_{ij} \log \bar A_{z_i z_j} + \left( 1 - A_{ij} \right) \log \left( 1 - \bar A_{z_i z_j} \right) \right\} .
\end{align}

Any maximizer of~\eqref{eq:blockmodelPL} over a fixed, non-empty subset $\mathcal{Z}_k \subseteq \{1,\ldots,k\}^n $ is a maximum profile likelihood estimator (MPLE) of $z$ with respect to $\mathcal{Z}_k$. We may equivalently re-cast the problem of likelihood maximization as one of Bernoulli Kullback--Leibler divergence minimization, with
\begin{equation*}
\D\left( p \,\middle\vert\middle\vert\, p' \right) = p \log \bigl( \tfrac{ p }{ \, p' } \bigr) + ( 1 - p ) \log \bigl( \tfrac{ 1 - p }{ \, 1 - p' } \bigr) 
\end{equation*}
denoting the Kullback--Leibler divergence of a $\operatorname{Bernoulli}( p' )$ distribution from a $\operatorname{Bernoulli}( p )$ one.

Equipped with this definition, observe that any MPLE $ \hat z(A,\mathcal{Z}_k) $ satisfies
\begin{align}
\label{eq:MPLE}
\hat z(A,\mathcal{Z}_k) & = \operatornamewithlimits{argmax}_{ z \in \mathcal{Z}_k } \sum_{ i < j } \left\{ A_{ij} \log \bar A_{z_i z_j} + \left( 1 - A_{ij} \right) \log \left( 1 - \bar A_{z_i z_j} \right) \right\} 
\\ \nonumber & = \operatornamewithlimits{argmax}_{ z \in \mathcal{Z}_k } \max_{ \theta \in [0,1]^{k \times k} } L(A;z,\theta)
\\ \nonumber & = \operatornamewithlimits{argmin}_{ z \in \mathcal{Z}_k } \min_{ \theta \in [0,1]^{k \times k} } \sum_{ i < j } \D\left( A_{ij} \,\middle\vert\middle\vert\, \theta_{z_i z_j} \right)
\\ \nonumber & = \operatornamewithlimits{argmin}_{ z \in \mathcal{Z}_k } \sum_{ i < j } \D\left( A_{ij} \,\middle\vert\middle\vert\, \bar A_{z_i z_j} \right) .
\end{align}

Maximizing the profile log-likelihood of~\eqref{eq:blockmodelPL} to obtain an MPLE $ \hat z(A,\mathcal{Z}_k) $ is thus equivalent to minimizing the sum of divergences $ \sum_{ i < j } \D\left( A_{ij} \,\middle\vert\middle\vert\, \bar A_{z_i z_j} \right) $. This sum serves as a proxy for its ``oracle'' counterpart based on the matrix $p \in [0,1]^{n \times n}$ of Bernoulli parameters of the underlying generative model. This corresponds to an idealized ``best blockmodel approximation'' of $p$.

With this in mind, we define an ``oracle MPLE'' $z(p,\mathcal{Z}_k)$ in direct analogy to~\eqref{eq:MPLE}. Let $ \bar p(z)_{ab} $ denote the arithmetic average of the $h_{ab}^2$ elements of $p$ in the $(a,b)$th block induced by $z$:
\begin{equation}
\label{eq:bar-p-ab}
\bar p(z)_{ab} = \frac{ 1 }{ h_{ab}^2 } \mathop{\sum_{\,\,i \in z^{-1}(a),}}_{j \in z^{-1}(b)} p_{ij} ,
\end{equation}
where we recall that $ h_{ab}^2 $ also depends on the choice of community assignment function $z$. We then have
\begin{align}
\label{eq:MPLEoracle}
\bar z(p,\mathcal{Z}_k) & = \operatornamewithlimits{argmax}_{ z \in \mathcal{Z}_k } \sum_{ i < j } \left\{ p_{ij} \log \bar p_{ z_i z_j } + \left( 1 - p_{ij} \right) \log \left( 1 - \bar p_{ z_i z_j } \right) \right\}
\\ \nonumber & = \operatornamewithlimits{argmin}_{ z \in \mathcal{Z}_k } \sum_{ i < j } \D\left( p_{ij} \,\middle\vert\middle\vert\, \bar p_{ z_i z_j } \right) .
\end{align}
Observe that neither $ \hat z(A,\mathcal{Z}_k) $ nor $ \bar z(p,\mathcal{Z}_k) $ is unique, since permuting the community labels $\{1,\ldots,k\}$ does not affect the likelihood of community assignment in~\eqref{eq:MPLE} or~\eqref{eq:MPLEoracle}. Even aside from the issue of label switching, we are not guaranteed uniqueness; see \citet{chatterjee2011random} and \citet{rinaldo2013maximum} for discussion of this issue in the specific context of network modeling, as well as \citet{fienberg2012maximum} in the general setting of log-linear models for sparse contingency tables.

\section{Sparse blockmodel consistency under model misspecification}
\label{sec:BlockmodelCons}

We now establish that an observed matrix $A \in \{0,1\}^{n \times n}$ of binary adjacencies yields ``oracle'' information on its generative $p \in (0,1)^{n \times n}$ at a rate that depends both on the sparsity of the network and on the speed at which the admissible network community sizes grow with $n$. We show that for suitable sequences of sets $\mathcal{Z}_k(n) \subseteq \{1,\ldots,k\}^n $ of admissible blockmodels, the maximum profile likelihood assignment method $ \hat z(A,\mathcal{Z}_k) $ implies that the likelihood risk of a fitted blockmodel, as measured by summing the divergences $\D\left( p_{ij} \,\middle\vert\middle\vert\, \bar A_{ \hat z_i \hat z_j} \right) $, approaches the risk $\sum_{ i < j } \D\left( p_{ij} \,\middle\vert\middle\vert\, \bar p_{ z_i z_j } \right)$ of the \emph{best possible} blockmodel approximation as $n$ grows large.

Theorem~\ref{excess} (proved in Appendix~\ref{sec:excessProof}) makes this statement precise and provides a set of sufficient conditions, driven primarily by the effective sample size of each fitted block.

\begin{theorem}[Controlling excess blockmodel risk]\label{excess}
$\!\!\!\!$For each $n = 2, 3, \ldots$, let $A \in \{0,1\}^{n \times n}$ be the adjacency matrix of a simple random graph with independent $\operatorname{Bernoulli}( p_{ij} )$ edges, and consider a corresponding sequence of $k$-community blockmodel estimators, with $k = k(n)$ a function of $n$. Assume:
\begin{enumerate}
      
 \item \label{cond:rhoAvg} \emph{The expected edge density $ \binom{n}{2}^{-1} \sum_{i<j} p_{ij}(n) $ of $A$ does not approach $0$ or $1$ too rapidly in~$n$}: there exists a monotone non-increasing, strictly positive sequence $ \bar \rho(n) $, such that for all $n$ sufficiently large, $ \bar \rho(n) \leq \binom{n}{2}^{-1} \sum_{i<j} p_{ij}(n) \leq 1 - \sqrt{ \bar \rho(n) } $.

 \item \label{cond:rhoMin} \emph{Likewise, no block density $ \{ \bar p_{ z_i z_j }(n) \}_{ i < j, z \in \mathcal{Z}_k(n) } $ approaches $0$ or $1$ too rapidly in~$n$}: there exists a monotone non-increasing, strictly positive sequence $ \rho_\wedge(n) $, such that $\rho_\wedge(n) \leq \bar \rho(n)$ and $\rho_\wedge(n) \leq \bar p_{ z_i z_j }(n) \leq 1 - \sqrt{ \rho_\wedge(n) } $ for all $z \in \mathcal{Z}_k(n)$, $1 \leq i<j \leq n$ and $n$ sufficiently large.

 \item \label{cond:nab} \emph{ The sizes $ \{ h_{z_i}(n) \}_{ 1 \leq i \leq n, z \in \mathcal{Z}_k(n)} $ of all possible communities grow sufficiently rapidly in $n$}: there exists a monotone strictly increasing sequence $ h_\wedge(n) $ taking values in $\{2, \ldots , \lfloor n/k(n) \rfloor$ such that for all $n$ sufficiently large, $ h_\wedge(n) \leq \min_{z \in \mathcal{Z}_k(n)} \left\{ \min_{ 1 \leq i \leq n } h_{z_i}(n) \right\} $.

\end{enumerate}

Assume that the sequences $ \mathcal{Z}_k, \bar \rho, \rho_\wedge, h_\wedge $ are fixed in advance and independent of all other quantities. Let $ \bar h = n / k \in [1,n] $, and suppose that the minimum effective sample size of every possible fitted block, $ \binom{ h_\wedge }{ 2 } \rho_\wedge $, and the average effective sample size across all blocks, $ \bar h^2 \bar \rho$, both grow sufficiently rapidly in $n$:
\begin{equation*}
h_\wedge^2 \rho_\wedge 
= \omega \bigl( \log n \bigr) , \qquad
\bar h^2 \bar \rho
= \omega \bigl( \max \left\{ \bar h^2 / n , 1 \right\} \log^3 n \bigr) .
\end{equation*}
Then for all sequences of subsets $ \mathcal{Z}_k \subseteq \{1,\ldots,k\}^n $ that respect condition~\ref{cond:nab}, we have as $n \rightarrow \infty$ that for any choice of $ z \in \mathcal{Z}_k $, deterministic or random,
\begin{multline}
\label{eq:norm-risk-thm}
\frac{ \sum_{ i < j : \bar A_{ z_i z_j } \notin \{0,1\} } \D\left( p_{ij} \,\middle\vert\middle\vert\, \bar A_{ z_i z_j } \right) }{ \sum_{ i < j : \bar A_{ z_i z_j } \notin \{0,1\} } p_{ij} } 
\\ = \frac{ \sum_{ i<j } \D\left( p_{ij} \,\middle\vert\middle\vert\, \bar p_{ z_i z_j } \right) }{ \sum_{ i < j } p_{ij} } + \mathcal{O}_P \left( \max \left\{ \frac{ 1 }{ \bar h^2 \bar \rho } , \frac{ \log \left( n / \bar h \right) }{ n \bar \rho } \right\} \right) .
\end{multline}
For $ \hat z(A,\mathcal{Z}_k) = \operatorname{argmax}_{ z \in \mathcal{Z}_k } \sum_{ i < j } \left\{ A_{ij} \log \bar A_{z_i z_j} + \left( 1 - A_{ij} \right) \log \left( 1 - \bar A_{z_i z_j} \right) \right\} $,
\begin{multline}
\label{oracy1}
\frac{ \sum_{ \bar A_{ \hat z_i \hat z_j } \notin \{0,1\} } \! \D \! \left( p_{ij} \! \,\middle\vert\middle\vert\, \! \bar A_{ \hat z_i \hat z_j } \! \right) }{ \sum_{ i < j : \bar A_{ \hat z_i \hat z_j } \notin \{0,1\} } p_{ij} } 
\\ = \frac{ \min_{ z \in \mathcal{Z}_k } \! \sum_{ i<j } \! \D \! \left( p_{ij} \! \,\middle\vert\middle\vert\, \! \bar p_{ z_i z_j } \! \right) }{ \sum_{ i<j } p_{ij} } 
+ \mathcal{O}_P \! \left( \! \max \! \left\{ \! \frac{ \log \bar h }{ \bar h^2 \bar \rho } , \sqrt{ \frac{ \log^2 \left( 1 / \rho_\wedge \right) \log \left( n / \bar h \right)}{ n \bar \rho } } \right\} \! \right) .
\end{multline}
These results also hold marginally with respect to the model of~\eqref{pij-model}.
\end{theorem}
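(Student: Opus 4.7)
The plan is to reduce the excess divergence to a sum of second-order fluctuations of each block average $\bar A_{ab}$ around its mean $\bar p_{ab}$, exploiting the exact first-order cancellation built into the definition of $\bar p_{ab}$. Viewing $\D(p\,\|\,q)$ as a smooth function of $q \in (0,1)$ with $\partial_q \D(p\,\|\,q) = -p/q + (1-p)/(1-q)$ and $\partial_q^2 \D(p\,\|\,q) = p/q^2 + (1-p)/(1-q)^2$, I would Taylor-expand $\D(p_{ij}\,\|\,\bar A_{ab})$ around $\bar p_{ab}$ and sum over $(i,j)$ within each block $(a,b)$. The linear term vanishes identically, since $\sum_{(i,j)\in(a,b)}\partial_q\D(p_{ij}\,\|\,\bar p_{ab}) = 0$ by the defining property of $\bar p_{ab}$ as the block average. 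This cancellation is the pivotal structural fact propagating into all the subsequent rates.

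The per-block remainder is then controlled by applying Bernstein's inequality to the $h_{ab}^2$ independent Bernoulli contributions making up $\bar A_{ab}$, yielding $(\bar A_{ab} - \bar p_{ab})^2 = \mathcal{O}_P(\bar p_{ab}/h_{ab}^2 + \log n / h_{ab}^4)$, combined with the bound $\partial_q^2\D \lesssim 1/\bar p_{ab}$ valid once $\bar A_{ab}$ lies in a neighbourhood of $\bar p_{ab}$. Summing over the $\mathcal{O}(k^2)$ blocks and normalizing by $\sum_{i<j} p_{ij} \asymp n^2 \bar\rho$ produces exactly the two terms in~\eqref{eq:norm-risk-thm}: a variance-type contribution of order $k^2/(n^2\bar\rho) = 1/(\bar h^2\bar\rho)$ and a deviation-type contribution of order $\log(n/\bar h)/(n\bar\rho)$. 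The hypothesis $h_\wedge^2 \rho_\wedge = \omega(\log n)$ simultaneously guarantees $|\bar A_{ab} - \bar p_{ab}| \ll \bar p_{ab}$ for every block with high probability, legitimising both the Taylor expansion around $\bar p_{ab}$ and the restriction $\bar A_{ab}\notin\{0,1\}$ in the normalized risk.

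For~\eqref{oracy1}, I would upgrade this pointwise-in-$z$ bound to a bound uniform over $z \in \mathcal{Z}_k$ and combine it with the defining inequality $L(A;\hat z) \geq L(A;z)$. Converting this empirical-likelihood comparison into one for $\D(p_{ij}\,\|\,\bar A_{\hat z_i \hat z_j})$ requires subtracting linear residuals of the form $\sum_{i<j}(A_{ij}-p_{ij})\log[\bar A_{z_i z_j}/(1-\bar A_{z_i z_j})]$, which I would control uniformly by a union bound over $\mathcal{Z}_k$. Since $|\mathcal{Z}_k| \leq k^n$, the tail must beat an exponent of $n\log k \asymp n\log(n/\bar h)$; coupled with the boundary blow-up $|\log[\bar A/(1-\bar A)]| \lesssim \log(1/\rho_\wedge)$, this yields the $\sqrt{\log^2(1/\rho_\wedge)\log(n/\bar h)/(n\bar\rho)}$ term, while the extra $\log\bar h$ factor in $\log\bar h/(\bar h^2\bar\rho)$ arises from promoting the per-block quadratic control to the maximum over all $\mathcal{O}(k^2)$ blocks by a further union bound.

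The main obstacle is precisely this uniform concentration step: ensuring the Bernstein bounds for $\bar A_{ab}$ are sharp enough to survive a union bound over $\mathcal{Z}_k$ while still delivering the advertised rates; this is exactly what the hypothesis $\bar h^2\bar\rho = \omega(\max\{\bar h^2/n,1\}\log^3 n)$ is calibrated to achieve. Secondary obstacles are the boundary behaviour at $\bar A_{ab}\in\{0,1\}$, handled by the explicit indicator restriction in the statement plus a negligibility argument using $\rho_\wedge$ and $h_\wedge$, and the extension from conditional-on-$\xi$ to marginal statements under~\eqref{pij-model}, which is essentially automatic since every inequality is phrased in terms of $p_{ij}$ together with independent Bernoulli trials.
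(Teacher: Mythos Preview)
Your proposal follows the same architecture as the paper's proof: the first-order cancellation in the block, concentration of the resulting second-order quantity uniformly over $\mathcal{Z}_k$ via a union bound, handling of the boundary event $\bar A_{ab}\in\{0,1\}$, and the comparison argument $L(A;\hat z)\ge L(A;\bar z)$ for the MPLE. The one noteworthy difference is in execution: where you Taylor-expand $\D(p_{ij}\,\|\,\cdot)$ and then control the quadratic remainder via Bernstein on $(\bar A_{ab}-\bar p_{ab})^2$, the paper instead observes the exact algebraic identity
\[
\sum_{i<j:\bar A\notin\{0,1\}}\bigl\{\D(p_{ij}\,\|\,\bar A_{z_iz_j})-\D(p_{ij}\,\|\,\bar p_{z_iz_j})\bigr\}=\sum_{a\le b:\bar A_{ab}\notin\{0,1\}}h_{ab}^2\,\D(\bar p_{ab}\,\|\,\bar A_{ab}),
\]
and then bounds the moments of each $h_{ab}^2\,\D(\bar p_{ab}\,\|\,\bar A_{ab})$ directly (showing they are asymptotically those of a $\chi^2_1$ variate), applying the Birg\'e--Massart Bernstein inequality to the sum. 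This buys the paper a cleaner treatment of the Taylor remainder: your expansion requires $|\bar A_{ab}-\bar p_{ab}|\ll\bar p_{ab}$ to hold \emph{uniformly} over $z\in\mathcal{Z}_k$ before the second-derivative bound $\partial_q^2\D\lesssim 1/\bar p_{ab}$ is valid, and surviving a union bound of size $|\mathcal{Z}_k|\le k^n$ with only $h_\wedge^2\rho_\wedge=\omega(\log n)$ is delicate; the paper's moment approach absorbs the tail behaviour of $\bar A_{ab}$ near $0$ or $1$ into the moment computation itself, sidestepping this issue.
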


Theorem~\ref{excess} is significant because it gives conditions under which the excess risk of a fitted blockmodel converges to zero, implying that blockmodel parameters can be estimated consistently even when the true generative model giving rise to $A$ is unknown. It predicts different rates of convergence for different network sparsity regimes. Depending on the growth of $k$ with $n$, either the first or the second of two rate terms in~\eqref{oracy1} will dominate. 

We may summarize these regimes as follows:
\begin{enumerate}
\item \emph{Dense networks}: If $\rho_\wedge$ and $\bar \rho$ remain constant in $n$, and $k$ grows with $n$ as $ k = \mathcal{O} ( n^{3/4} )$, then Theorem~\ref{excess} predicts a convergence rate of at least $ \sqrt{ \log(n) / n } $. If instead $k$ grows like $n^{\delta}$ for $ 3/4 < \delta < 1$, then this rate will decrease to $ \log n / n^{2(1-\delta)} $. 
\item \emph{Sparse networks}: If $\rho_\wedge$ and $\bar \rho$ decrease like $n^{-2\gamma}$ for $ 0 < \gamma < 1/2 $, and $k = \mathcal{O} ( n^{3/4-\gamma/2} )$, then Theorem~\ref{excess} predicts the rate $ \log(n)^{3/2} / n^{1/2-\gamma} $. If $k$ grows like $n^{\delta}$ for $ 3/4-\gamma/2 < \delta < 1 - \gamma $, then this rate will decrease to $ \log n / n^{2(1-\delta-\gamma)} $.
\item \emph{Ultra-sparse networks}: If $\rho_\wedge$ and $\bar \rho$ decrease like $\log(n)^{3+\beta} / n$ for $ \beta > 0 $, then Theorem~\ref{excess} predicts rate $ \log(n)^{-\beta/2} $ whenever $k = \mathcal{O} ( n^{1/2} )$, matching the regime of \citet*{choi2012stochastic}.
\end{enumerate}
In each of these cases, the given conditions on $\rho_\wedge$ can be relaxed accordingly.

Theorem~\ref{excess} is the first such result known for sparse or ultra-sparse networks---those for which $\bar \rho = o(1)$, so that the average number of connections per node can grow sublinearly, here as slowly as logarithmically in $n$. This complements the recent result of \citet{choi2012co} for fixed-$k$ fitting of dense bipartite graphs---those for which $ \rho_\wedge$ and $\bar \rho $ remain constant, so that the average number of connections per node grows linearly in $n$. Theorem~\ref{excess} extends this regime, allowing for the growth of $k$ with $n$, while also yielding an improved convergence rate of $ \sqrt{ \log(k) / n } $ for dense graphs.

To understand why Theorem~\ref{excess} holds in this setting, we begin by conditioning on a choice of community assignment function $z$. Blocks of network edges then comprise independent sets of independent Bernoulli trials. Conditionally upon $z$, sample proportions $ \bar A_{ z_i z_j } \,\vert\, z $ of these blocks are thus independent Poisson--Binomial variates. Without additional restrictions, however, a fitted block could be any size---even as small as a single Bernoulli trial. Thus it is necessary to constrain the set $\mathcal{Z}_k \subseteq \{1,\ldots,k\}^n$ of admissible blockmodels, and also to constrain the allowable global and local sparsity of the network, so that the effective sample size of every possible $ \bar A_{ z_i z_j } \,\vert\, z $ grows in $n$. This ensures that all block-wise sample proportions $ \bar A_{ z_i z_j } \,\vert\, z $ behave like Normal variates in the large-sample limit, when appropriately standardized. 

There are then two main technical challenges:
\begin{enumerate}
\item \emph{Double randomness}: While every $ \bar A_{ z_i z_j } \,\vert\, z $ is amenable to analysis, choosing $\hat z$ by profile likelihood maximization introduces ``double randomness,'' coupling all blocks and precluding a direct analysis of $ \bar A_{ \hat z_i \hat z_j } $. Instead, we take the approach of \citet*{choi2012stochastic}, and show that results for $ \bar A_{ z_i z_j } \,\vert\, z $ hold uniformly for any choice of $z$ --- and therefore that they also hold for $ \bar A_{ \hat z_i \hat z_j } $.
\item \emph{Likelihood zeros}: The assumption that all $p_{ij} \in (0,1)$ ensures that each $ \D\left( p_{ij} \,\middle\vert\middle\vert\, \bar p_{ z_i z_j } \right) $ is finite. However, $ \D\left( p_{ij} \,\middle\vert\middle\vert\, \bar A_{ \hat z_i \hat z_j } \right) $ will fail to be finite if $ \bar A_{ \hat z_i \hat z_j } \in \{0,1\} $, in which case the $(\hat z_i , \hat z_j)$th block has saturated. Such blocks add $0$ to the likelihood; their parameters are not estimable \citep{fienberg2012maximum}. The theorem conditions allow us to control the probability of these likelihood zeros, by requiring the effective sample size of each block to grow sufficiently rapidly in $n$.
\end{enumerate}
This latter point is particularly important, since only values in the interior of the parameter space $[0,1]^{k \times k}$ are estimable \citep[Theorem~7]{fienberg2012maximum}. As in the case of additional structural zeros \citep[Corollary~8]{fienberg2012maximum}, the Fisher information matrix will be rank-deficient, and the degrees of freedom must be adjusted accordingly in order to obtain correct inferential conclusions. This explains why the random denominator term is necessary in the left-hand side of~\eqref{oracy1}.

We may connect this understanding to the three sparsity regimes described above: the case of dense networks, corresponding to the setting of exchangeable random graphs; that of sparse networks, where the density of network edges $ \smash{ \binom{n}{2}^{-1} } \sum_{i<j} p_{ij} $ decays as some power of $n$; and that of ultra-sparse networks, where the edge density decays at a rate approaching $ \log(n) / n $. This is the so-called connectivity threshold, above which an inhomogeneous random graph will be fully connected with probability approaching $1$ as $ n \rightarrow \infty$ \citep{alon1995note}. If the edge density were instead to decay at a rate of $ 1 / n $---the extremely sparse setting of \citet{bollobas2009metric}---then the resulting networks would fail in general to be connected, and Poisson rather than Normal limiting behavior would hold for each block \citep{olhede2013degree}.

\section{From blockmodels to smooth graphon estimation}
\label{sec:graph-est}

We now present our final result leading to consistent graphon estimation. To go beyond conditional estimation of inhomogeneous random graphs via blockmodels, we will assume additional structure via graphon smoothness. This smoothness will in turn allow us to control estimation risk, by sending the main term in Theorem~\ref{excess} to zero.

A blockmodel first orders the rows and columns of $A$, and then groups its entries according to a vector of community sizes $ h \in \{ 2, \ldots, n \}^k $. This specifies a partition $ H $ in accordance with~\eqref{H}, which in turn induces a piecewise-constant approximation of the graphon $f\left(x,y\right)$ along blocks. To see this, define the domain $ \omega_{ab} \subseteq [0,1)^2 $ of the $(a,b)$th block as
\begin{align}
\nonumber
\omega_{ab} & = \left[ H(a-1) , H(a) \right) \times \left[ H(b-1) , H(b) \right) , \quad 1 \leq a,b \leq k , 
\intertext{and define the blockmodel approximation $ \bar f\left( x , y ; h \right) $ of $ f\left( x , y \right) $ via the local averages $ \bar f_{ab} , 1 \leq a,b \leq k $:}
\label{omegaxy}
\bar f\left( x , y ; h \right) & = \bar f_{ H^{-1}(x) H^{-1}(y) }, \quad \bar f_{ab} = \frac{ 1 }{ \left| \omega_{ab} \right| } \iint_{ \omega_{ab} }
f\left( x , y \right) \, dx \, dy .
\end{align}

If $f\left(x,y\right)$ is smooth as well as bounded, then results from approximation theory allow the error $ \| f - \bar f \| $ to be controlled in any $L_p$ norm, as a function of the maximum over all block diameters $ ( h_a^2 + h_b^2 )^{1/2} / n $ for $ 1 \leq a,b \leq k$ \citep[see also Lemma~\ref{normbound}]{devore1998nonlinear}. 

Recall from~\eqref{eq:z-h} that any blockmodel community assignment vector $z$ is a composition $ H^{-1} \circ \Pi_z$ for some partition $ H $ of $[0,1]$ and permutation $ \Pi_z $ of $\{1, \ldots, n\}$, so that $ z_i = H^{-1} \left\{ \Pi_z(i) / n \right\} , 1 \leq i \leq n$. From~\eqref{eq:bar-p-ab}, we may express $ \bar p(z) $ for any $ 1 \leq a,b \leq k $ as
\begin{align}
\nonumber
\bar p( z )_{ab} & = \frac{ 1 }{ h_{ab}^2 } \sum_{i<j} p_{ij} \I\bigl[ H^{-1} \left\{ \Pi_z(j) / n \right\} = b \bigr] \I\bigl[ H^{-1} \left\{ \Pi_z(i) / n \right\} = a \bigr]
\\ \label{eq:pbar-xi} & = \frac{ 1 }{ h_{ab }^{2} } \sum_{ j = n H(b-1)+1 }^{ n H(b) } \sum_{ i = n H(a-1)+1}^{ n H(a) \I\left( a \neq b \right) + \left( j - 1 \right) \I\left( a = b \right) } p_{ \, \Pi_z^{-1}(i) \, \Pi_z^{-1}(j) } .
\end{align}
Thus $ \bar p(z)_{ a b } $ is an average over $ h_{ab }^{2} $ graphon evaluations $ f\,\bigl( \xi_{\Pi_z^{-1}(i)},\xi_{\Pi_z^{-1}(j)} \bigr) $, since the model of~\eqref{pij-model} asserts that $ p_{ij}(n) \propto f\left(\xi_i,\xi_j\right) $. These evaluations occur at random points determined by $\{ \xi_1, \ldots \xi_n \}$ according to the inverse of the permutation $ \Pi_z $, while $ H $ determines the size of each block.

From this simple observation, we will show that it is possible to relate $ \bar p(z)_{ a b } $ to $f\left(x,y\right)$ by choosing an ``oracle'' permutation $ \Pi_z(i) $ whose inverse yields the ordered sample $\{ \xi_{(1)}, \ldots \xi_{(n)} \}$. To see this, first note that whenever the H\"older condition of~\eqref{eq:Holder-defn} is satisfied, we have by Lemma~\ref{ftilde} that
\begin{equation*}
f\left( \xi_{(i)},\xi_{(j)} \right) 
= f\,\bigl( \tfrac{i}{n+1} , \tfrac{j}{n+1} \bigr) 
+ {\cal O}_P\bigr( n^{-\alpha/2} \bigr) ,
\end{equation*}
because each $ \xi_{(i)} $ converges in probability to its expectation $ i / \left(n+1\right) $ at a rate no worse than $ n^{-1/2} $, and~\eqref{eq:Holder-defn} relates this to $ \bigl| f\left( \xi_{(i)},\xi_{(j)} \right) - f\,\bigl( \frac{i}{n+1} , \frac{j}{n+1} \bigr) \bigr| $. Now take $ \Pi_z(i) = (i)^{-1} $, where $(i)^{-1}$ denotes the rank of $\xi_i$ from smallest to largest, and observe that $ f\,\bigl( \xi_{\Pi_z^{-1}(i)} , \xi_{\Pi_z^{-1}(j)} \bigr)$ evaluates to $ f\left( \xi_{(i)} , \xi_{(j)} \right) $. 

The key point is that when $f$ is $\alpha$-H\"older continuous, then convergence of the ordered sample $ \{ \xi_{(i)} \}_{i=1}^n $ governs convergence of the random averages comprising $ \bar p( z )_{ab} $ in ~\eqref{eq:pbar-xi}. Indeed, if $ h_\vee $ uniformly upper-bounds the largest possible community size, then by Lemma~\ref{lem:pbar-ftilde}, we have that
\begin{equation*}
\Pi_z = ( \cdot )^{-1}
\, \Rightarrow \,
\rho_n^{-1} \bar p_{ z_{(i)} z_{(j)} } \, - \, \bar f\left(\xi_{(i)},\xi_{(j)} ; h \right) = {\cal O}_P \bigl( n^{-\alpha/2} + ( n / h_\vee )^{-\alpha} \bigr) ,
\end{equation*}
where we recall from~\eqref{omegaxy} that ${\bar f}\left(x,y ; h \right) $ is the local block average of $f$.

As a consequence, we can control the oracle estimation risk featured in Theorem~\ref{excess} as follows.

\begin{theorem}[Controlling absolute risk]\label{absolute}
Assume in the scaled exchangeable graph model of~\eqref{pij-model} that:
\begin{enumerate}
\item \label{cond:Holder} The graphon $f$ is a positive, symmetric function on $(0,1)^2$, and is $\alpha$-H\"older continuous, $ 0 < \alpha \leq 1$; 
\item \label{cond:rho} Furthermore, $f$ is bounded away from zero and $\max_n \rho_n f$ is bounded away from unity;
\item \label{cond:oracle-perm} Each set $ \mathcal{Z}_k(n) \subseteq \{1,\ldots,k\}^n $ of admissible blockmodel assignments has the following property: If $ H $ is generated by some $ z \in \mathcal{Z}_k $, then $ H^{-1} \circ \Pi \in \mathcal{Z}_k $ for every permutation $ \Pi $ of $\{1,\ldots,n\}$.
\end{enumerate}
Then for $ h_\vee (n)$ the largest community size in each $ \mathcal{Z}_k(n) $, the oracle likelihood risk in Theorem~\ref{excess} satisfies
\begin{equation}
\label{eq:min-risk-Holder}
\frac{ \min_{ z \in \mathcal{Z}_k } \sum_{ i<j } \D\left( p_{ij} \,\middle\vert\middle\vert\, \bar p_{ z_i z_j } \right) }{ \sum_{ i<j } p_{ij} } 
= {\cal O}_P \bigl( n^{-\alpha} + ( n / h_\vee )^{-2\alpha} \bigr) .
\end{equation}
\end{theorem}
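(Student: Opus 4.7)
The plan is to upper-bound $\min_{z \in \mathcal{Z}_k}$ by evaluating the objective at a carefully chosen ``oracle'' assignment $z^\ast$. By condition~\ref{cond:oracle-perm}, starting from any admissible partition $H$ I may take $\Pi_{z^\ast}$ to be the permutation sending $i$ to the rank of $\xi_i$; then $z^\ast \in \mathcal{Z}_k$ and, crucially, $\Pi_{z^\ast}^{-1}$ enumerates the ordered sample $\xi_{(1)}, \ldots, \xi_{(n)}$, placing us exactly in the setting of Lemma~\ref{lem:pbar-ftilde}. It therefore suffices to establish the claimed bound for this specific $z^\ast$.

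Next I would convert Bernoulli KL divergences into squared differences. Conditions~\ref{cond:Holder} and~\ref{cond:rho} yield constants $c_1, c_2 > 0$ with $c_1 \rho_n \leq p_{ij}, \, \bar p_{z^\ast_i z^\ast_j} \leq 1 - c_2$ for all $i < j$ and $n$ large, so the standard chi-squared upper bound on Bernoulli KL gives
\[
\D\left( p_{ij} \,\middle\vert\middle\vert\, \bar p_{z^\ast_i z^\ast_j} \right) \leq \frac{ (p_{ij} - \bar p_{z^\ast_i z^\ast_j})^2 }{ \bar p_{z^\ast_i z^\ast_j} \, (1 - \bar p_{z^\ast_i z^\ast_j}) } = {\cal O}\!\left( \frac{ (p_{ij} - \bar p_{z^\ast_i z^\ast_j})^2 }{ \rho_n } \right),
\]
uniformly in $(i,j)$. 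Writing $p_{ij} - \bar p_{z^\ast_i z^\ast_j} = \rho_n \bigl( f(\xi_i,\xi_j) - \rho_n^{-1} \bar p_{z^\ast_i z^\ast_j} \bigr)$ and using the triangle inequality, I would split
\[
\bigl| f(\xi_i,\xi_j) - \rho_n^{-1} \bar p_{z^\ast_i z^\ast_j} \bigr| \leq \bigl| f(\xi_i,\xi_j) - \bar f(\xi_i,\xi_j; h) \bigr| + \bigl| \bar f(\xi_i,\xi_j; h) - \rho_n^{-1} \bar p_{z^\ast_i z^\ast_j} \bigr|.
\]
The first term is a deterministic block-averaging error: since every block $\omega_{ab}$ has sides at most $h_\vee/n$ and $f \in \operatorname{\textrm{H\"older}}^\alpha(M)$, this deviation is at most $M(\sqrt{2}\, h_\vee/n)^\alpha$ pointwise. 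The second term is exactly what Lemma~\ref{lem:pbar-ftilde} controls under the oracle permutation, giving ${\cal O}_P\bigl( n^{-\alpha/2} + (n/h_\vee)^{-\alpha} \bigr)$ uniformly over $(i,j)$.

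Combining these bounds yields $(p_{ij} - \bar p_{z^\ast_i z^\ast_j})^2 = \rho_n^2 \cdot {\cal O}_P\bigl( n^{-\alpha} + (h_\vee/n)^{2\alpha} \bigr)$ uniformly in $(i,j)$, so summing over $\binom{n}{2}$ pairs gives numerator $\binom{n}{2} \rho_n \cdot {\cal O}_P\bigl( n^{-\alpha} + (h_\vee/n)^{2\alpha} \bigr)$ after the $\rho_n^{-1}$ from the KL bound. The denominator $\sum_{i<j} p_{ij} = \rho_n \sum_{i<j} f(\xi_i,\xi_j)$ has mean $\binom{n}{2}\rho_n$ (since $\iint f = 1$) and concentrates with $o_P(1)$ relative error by a law of large numbers for the bounded $U$-statistic kernel $f$, which is a faster rate than any of the error terms above. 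Taking the ratio delivers the stated ${\cal O}_P\bigl( n^{-\alpha} + (h_\vee/n)^{2\alpha} \bigr)$ rate. The main obstacle is the \emph{uniform} control demanded in the two lemma invocations: both $f(\xi_{(i)}, \xi_{(j)}) - f(i/(n+1), j/(n+1))$ and the discrepancy $\bar f - \rho_n^{-1}\bar p$ must hold at the stated $O_P$ rates \emph{simultaneously} across all $\binom{n}{2}$ index pairs rather than pointwise, which rests on a uniform concentration of the order statistics $\xi_{(i)}$ around $i/(n+1)$ and is the substantive content of Lemmas~\ref{ftilde} and~\ref{lem:pbar-ftilde}.
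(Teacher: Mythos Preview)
Your approach is correct and matches the paper's: pick the oracle rank permutation via Condition~\ref{cond:oracle-perm}, reduce Bernoulli KL to a quadratic, split the error into the block-bias term (Lemma~\ref{normbound}) and the $\bar p$--$\rho_n\bar f$ discrepancy (Lemma~\ref{lem:pbar-ftilde}), and control the denominator by the $U$-statistic argument of Lemma~\ref{sump}. The paper carries out the KL-to-quadratic step via a Taylor expansion (Lemmas~\ref{taylor-div}, \ref{KLDiv}, \ref{divergenceMC2}) rather than your direct $\chi^2$ upper bound, but the structure and the rate accounting are identical.

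One correction on your stated obstacle: you do not need uniform-in-$(i,j)$ $O_P$ control or simultaneous concentration of the order statistics, and Lemmas~\ref{ftilde} and~\ref{lem:pbar-ftilde} do not provide this. What they provide are \emph{moment} bounds whose constants are independent of $(i,j)$; in particular Lemma~\ref{ftilde} at $\beta=2$ gives the second-moment control $\E\bigl|f(\xi_{(i)},\xi_{(j)})-f(i_n,j_n)\bigr|^2=O(n^{-\alpha})$ that your squared decomposition requires. The argument closes by summing these uniform expectation bounds over all $\binom{n}{2}$ pairs and applying Markov's inequality once to the normalized sum, exactly as in Lemmas~\ref{comby} and~\ref{divergenceMC2}; no pathwise uniformity over $\{\xi_{(i)}\}$ is invoked.
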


We prove this theorem in Appendix~\ref{sec:sbs-pf} by using the oracle choice of permutation $ (\cdot)^{-1} $ to upper-bound the risk via a block approximation ${\bar f}\left(x,y ; h \right) $ of $f\left(x,y \right)$, based on some $ z^* $ which achieves the minimum in~\eqref{eq:min-risk-Holder}. Conditions~\ref{cond:Holder} and~\ref{cond:rho} are then sufficient to guarantee the claimed rate of approximation. Condition~\ref{cond:oracle-perm} ensures that $ H^{-1} \circ (\cdot)^{-1} \in \mathcal{Z}_k $, since we do not know $ z^* $ or the requisite ordering $ (\cdot)^{-1} $ in advance.

\section{Rates of convergence}
\label{sec:rates}

We see directly that the rate of convergence in Theorem~\ref{absolute} depends on the H\"older continuity of $f$ in two ways: through the convergence of the ordered sample $ \{ \xi_{(i)} \}_{i=1}^n $ (variance), and through the rate at which $ h_\vee / n $ goes to zero in $n$ (bias).  This rate is also self-scaling relative to the sparsity of the network, as it does not depend on $ \rho_n $.

In contrast, Theorem~\ref{excess} depends strongly both on the network sparsity factor $\rho_n$, as well as the minimum and average admissible block sizes, $h_\wedge$ and $ \bar h $.  The conditions of Theorem~\ref{excess} ensure that excess blockmodel risk can be controlled under model misspecification, enabling groupings of nodes with good risk properties to be estimated, despite the variability of the data.

Together, the results of Theorems~\ref{excess} and~\ref{absolute} enable us to establish mean-square graphon consistency at the rates indicated in Theorem~\ref{graphconst}, namely
\begin{equation*}
{\cal O}_P \left( \frac{ \log \bar h }{ \bar h^2 \rho_n } + \sqrt{ \frac{ \log^2 \left( 1 / \rho_n \right) \log \left( n / \bar h \right) }{ n \rho_n } } + \left( \frac{ h_\vee }{ n } \right)^{2\alpha} + \frac{ \log \left( h_\vee / \rho_n \right) }{ n^{ \alpha / 2 } } \right) .
\end{equation*}
The first two terms come directly from Theorem~\ref{excess}, while the third is from Theorem~\ref{absolute}.  The final term comes from relating the discrete quantities featured in these theorems to the graphon itself, and is driven in part by the fact that we do not know the ordering of the data relative to the $\operatorname{Uniform}(0,1)$ variates $\{ \xi_i \}_{i=1}^n$ by which the graphon is sampled.  The $\mathcal{O}\bigl( n^{-1/2} \bigr)$ variance of the ordered sample $\{ \xi_{(i)} \}_{i=1}^n$ subsequently appears, and is modulated by the regularity of the graphon through its H\"older continuity exponent $ \alpha $.

\section{Conclusion}

In this article we have established a number of new results within a nonparametric framework for network inference, based on graphons as natural limiting objects.  Understanding graphons as analytic objects, as well as the behavior of dense and sparse networks based on them, is fundamental to advancing our nonparametric understanding of networks.  

To this end, we have established consistency of graphon estimation under general conditions, giving rates which include the important practical setting of sparse networks.  By treating dense and sparse stochastic blockmodels with a growing number of classes, under model misspecification, our results improve substantially upon what is currently known in the literature.
 
Our results link strongly to approximation theory, nonparametric function estimation, and the theory of graph limits, and thus provide for a foundational understanding of nonparametric statistical network analysis.

\appendix

\newcommand{\Graphonconsistency}{\ref{graphconst}}
\section{Proof of Theorem~\protect\Graphonconsistency{} and its lemmas}
\label{ap-graphcon}

\subsection{Proof of Theorem~\protect\Graphonconsistency{}}

\begin{proof}
We note from Lemma~\ref{rhohat} that for $ (x,y) \in (0,1)^2$
\begin{equation*}
\hat{f}\left(x,y;h\right)=\hat{\rho}_n^{-1} \bar A_{H^{-1}(x) H^{-1}(y)}
=\bigl\{1+{\cal O}_P\bigl( n^{-1/2} \bigr) \bigr\}{\rho}_n^{-1} \bar A_{H^{-1}(x) H^{-1}(y)} .
\end{equation*}
Recalling the definition of $ \bar A_{ab} $, we see that uniformly for all choices of $H$ and $\Pi$, and for all $ 1 \leq a, b \leq k$, we have $ 0 \leq \E \bar A_{ab} \leq \rho_n \sup_{(x,y)\in(0,1)^2} f\left(x,y\right) $ and $ 0 \leq \E \bar A_{ab}^2 \leq \rho_n^2 \sup_{(x,y)\in(0,1)^2} f^2\left(x,y\right) $. 

Since $f$ is by hypothesis H\"older continuous on a bounded domain, it is bounded, and thus $ \bar A_{ab} = {\cal O}_P\bigl( \rho_n \bigr)$ and $ \bar A_{ab}^2 = {\cal O}_P\bigl(\rho_n^2 \bigr)$ by Markov's inequality. We will thus expand the squared error term in the integrand of the graphon mean-squared error pointwise, using the fact that the error term should be evaluated at the infimum over measure preserving bijections. Therefore this error be upper-bounded by its evaluation at some $ \sigma^\ast \in \mathcal{M} $, which we will choose in accordance with the proof of Lemma~\ref{graphconst-D} below:
\begin{align*}
&\inf_{\sigma\in \mathcal{M} } \!\!\iint_{(0,1)^2} \!\!\!\!\!\!\!\!\!\bigl| f\bigl( \sigma(x),\sigma(y) \bigr) \!\!-\!\!
\bigl\{1\!+\!{\cal O}_P\bigl( n^{-1/2} \bigr) \bigr\}{\rho}_n^{-1} \bar A_{H^{-1}(x) H^{-1}(y)} \bigr|^2 \, dx \, dy\\
& \le \iint_{(0,1)^2} \bigl| f\bigl( \sigma^\ast(x),\sigma^\ast(y) \bigr) -
{\rho}_n^{-1} \bar A_{H^{-1}(x) H^{-1}(y)} \bigr|^2 \, dx \, dy+{\cal O}_P\bigl( n^{-1/2} \bigr)
\\ & \leq \iint_{ \hat f \notin \{0,1\} } \bigl| f\bigl( \sigma^\ast(x),\sigma^\ast(y) \bigr) - {\rho}_n^{-1} \bar A_{H^{-1}(x) H^{-1}(y)} \bigr|^2 \, dx \, dy+{\cal O}_P\bigl( n^{-1/2} \bigr)
\\ & \leq 2 \left( \sup f \right)\!\!\! \iint_{ \hat f \notin \{0,1\} } \hskip-0.75cm \rho_n^{-1} \D\left\{ \rho_n f\bigl( \sigma^\ast(x),\sigma^\ast(y) \bigr) \,\middle\vert\middle\vert\, \rho_n \hat f\left( x , y ; h \right) \right\} \,dx \,dy + {\cal O}_P\bigl( n^{-1/2} \bigr) ,
\end{align*}
where the last two lines follow from Lemmas~\ref{fhatbad} and~\ref{KLDiv}, respectively. By Lemma~\ref{graphconst-D}, we have
\begin{multline*}
2 \left( \sup f \right) \iint_{ \hat f \notin \{0,1\} } \hskip-0.75cm \rho_n^{-1} \D\left\{ \rho_n f\bigl( \sigma^\ast(x),\sigma^\ast(x) \bigr) \,\middle\vert\middle\vert\, \rho_n \hat f\left( x , y ; h \right) \right\} \,dx \,dy
= 2 \left( \sup f \right) \\ \cdot\iint_{ \hat f \notin \{0,1\} } \hskip-0.75cm f\left( x , y \right) \,dx \,dy
\textstyle
\frac{ \sum_{ i < j : \bar A_{ z_i z_j } \notin \{0,1\} } \D\left( p_{ij} \,\middle\vert\middle\vert\, \bar A_{ z_i z_j } \right) }
{ \sum_{ i < j: \bar A_{z_i z_j } \notin \{0,1\} } p_{ij} } \left\{ 1 + {\cal O}_P\left( n^{-\alpha / 2  } \right) \right\}\\
+ {\cal O}_P \left( \frac{ \log \left( h_\vee / \rho_n \right) }{ n^{ \alpha / 2 } } + \frac{ \log h_\vee }{ \rho_n n } \right) ,
\end{multline*}
uniformly in $z$. The conditions of Theorem~\ref{graphconst} are sufficient for Theorems~\ref{excess} and~\ref{absolute} to hold, and so if $ \hat f$ is fitted by maximum profile likelihood, then we may substitute terms from Theorems~\ref{excess} and~\ref{absolute} to obtain
\begin{multline*}
2 \left( \sup f \right) \iint_{ \hat f \notin \{0,1\} } \hskip-0.75cm \rho_n^{-1} \D\left\{ \rho_n f\bigl( \sigma^\ast(x),\sigma^\ast(x) \bigr) \,\middle\vert\middle\vert\, \rho_n \hat f\left( x , y ; h \right) \right\} \,dx \,dy
= 2 \left( \sup f \right) \\ \cdot\iint_{ \hat f \notin \{0,1\} } \hskip-0.75cm f\left( x , y \right) \,dx \,dy
\cdot
\textstyle \left[\textstyle {\cal O}_P \bigl( n^{-\alpha} + \bigl( \frac{n }{ h_\vee} \bigr)^{-2\alpha} \bigr) + \textstyle \mathcal{O}_P \left(\max \left\{ \frac{ \log \bar h }{ \bar h^2 \rho_n } , \sqrt{ \frac{ \log^2 \frac{ 1 }{ \rho_n } \log \frac{ n }{ \bar h } }{ n \rho_n } } \right\} \right) \right] \\+ {\cal O}_P \left( \frac{ \log \left( h_\vee / \rho_n \right) }{ n^{ \alpha / 2 } } + \frac{ \log h_\vee }{ \rho_n n } \right) .
\end{multline*}
\begin{equation*}
\vspace{-2\baselineskip}
\end{equation*}
\end{proof}

\subsection{Auxiliary lemmas needed for Theorem~\protect\Graphonconsistency{}}

\begin{lemma}\label{rhohat}
Assume the setting of Theorem~\ref{graphconst}. Then $\E \hat \rho_n = \rho_n $, $ \var \hat \rho_n = {\cal O}\bigl( \rho_n^2 / n \bigr)$.
\end{lemma}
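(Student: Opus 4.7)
The plan is to compute the expectation directly and to decompose the variance as a sum of pairwise covariances, grouped by how many node-indices two Bernoulli entries share. Marginalising over the uniform latent variables $\xi_1,\dots,\xi_n$ gives exactly the identities needed in each case.

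For the expectation, I would use iterated expectation. Since $A_{ij}\mid p_{ij}\sim\operatorname{Bernoulli}(p_{ij})$ with $p_{ij}=\rho_n f(\xi_i,\xi_j)$ and the $\xi_i$ are iid Uniform$(0,1)$,
\begin{equation*}
\E A_{ij}=\E\E[A_{ij}\mid\xi_i,\xi_j]=\rho_n\iint_{(0,1)^2}f(x,y)\,dx\,dy=\rho_n,
\end{equation*}
because $\iint f=1$ by the model of~\eqref{pij-model}. Summing over $i<j$ and dividing by $\binom{n}{2}$ yields $\E\hat\rho_n=\rho_n$.

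For the variance, expand
\begin{equation*}
\var\hat\rho_n=\tbinom{n}{2}^{-2}\sum_{i<j}\sum_{k<l}\cov(A_{ij},A_{kl}),
\end{equation*}
and split the double sum according to $|\{i,j\}\cap\{k,l\}|\in\{0,1,2\}$. In the disjoint case, $A_{ij}$ and $A_{kl}$ depend on disjoint sub-collections of the $\xi$'s and are conditionally independent given all $\xi$'s, so $\cov(A_{ij},A_{kl})=0$. In the case $(i,j)=(k,l)$, there are $\binom{n}{2}$ terms, each bounded by $\var A_{ij}=\rho_n-\rho_n^2\le\rho_n$, giving a contribution of order $\rho_n/n^2$. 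In the shared-index case there are $O(n^3)$ terms; for these I would use conditional independence of $A_{ij}$ and $A_{i\ell}$ given $\xi$ to write
\begin{equation*}
\E[A_{ij}A_{i\ell}]=\rho_n^2\E\bigl[f(\xi_i,\xi_j)f(\xi_i,\xi_\ell)\bigr]=\rho_n^2\int_0^1\!\Bigl(\int_0^1 f(x,y)\,dy\Bigr)^{\!2}dx,
\end{equation*}
so that $\cov(A_{ij},A_{i\ell})=\rho_n^2(\|f_1\|_2^2-1)$, which is $O(\rho_n^2)$ since $f$ is bounded (hypothesis~\ref{cond:Holder1} of Theorem~\ref{graphconst} gives H\"older continuity on a bounded domain, hence $\sup f<\infty$). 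This case therefore contributes $\binom{n}{2}^{-2}\cdot O(n^3\rho_n^2)=O(\rho_n^2/n)$.

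Combining the three cases gives $\var\hat\rho_n=O(\rho_n/n^2)+O(\rho_n^2/n)$. Under the hypothesis $\rho_n=\omega(n^{-1}\log^3 n)$ of Theorem~\ref{graphconst}, $n\rho_n\to\infty$, so $\rho_n/n^2=o(\rho_n^2/n)$ and the second term dominates, yielding $\var\hat\rho_n=\mathcal{O}(\rho_n^2/n)$. There is no real obstacle here; the only point worth care is justifying that disjoint edge-pairs have exactly zero covariance (not merely asymptotically zero), which follows by conditioning on $\xi$ rather than by appealing to any large-$n$ approximation, and correctly identifying that the shared-index covariance is the dominant contribution under the sparsity regime imposed on $\rho_n$.
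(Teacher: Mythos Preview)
Your proposal is correct and follows essentially the same approach as the paper: compute $\E\hat\rho_n$ by iterated expectation, expand $\var\hat\rho_n$ as a double sum of covariances, split by the number of shared node-indices, note that disjoint pairs give zero covariance, bound the diagonal and one-shared-index contributions separately, and invoke $\rho_n=\omega(n^{-1}\log^3 n)$ to identify $\rho_n^2/n$ as the dominant term. The only cosmetic difference is that you compute the one-shared-index covariance exactly as $\rho_n^2(\|f_1\|_2^2-1)$, whereas the paper upper-bounds it via Cauchy--Schwarz by $\rho_n^2\bigl(\iint f^2-1\bigr)$; both are $O(\rho_n^2)$ since $f$ is bounded, so the arguments are interchangeable.
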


\begin{proof}
Since $i<j$ and $k<l$, we have that $ \E A_{ij} \,\vert\, \xi = \rho_n f\left( \xi_i , \xi_j \right) $ and $
\cov\left(A_{ij},A_{kl}\,\middle\vert \, \xi\right) = \rho_n f(\xi_{i},\xi_{j})\left\{1-\rho_n f(\xi_{i},\xi_{j})\right\} \I\left(i=k\right) \I\left(j=l\right) $.
We first use the law of total expectation to deduce
\begin{equation*}
\textstyle
\E \hat \rho_n = \tbinom{n}{2}^{-1} \sum_{i<j} \E_{\xi} \left\{ \rho_n f\left(\xi_i,\xi_j\right) \right\} = \rho_n \iint_{(0,1)^2} f(x,y)\,dx\, dy=\rho_n.
\end{equation*}
The necessary marginal variances and covariances can then be established hierarchically:
\begin{align*}
\nonumber
\var(A_{ij})&=\E_{\xi}\left\{\var\left(A_{ij}\,\middle\vert \,\xi\right)\right\}+\var_{\xi}\left\{\E(A_{ij}\,\middle\vert \,\xi)\right\}
\\ & = \left\{ \E \rho_n f(\xi_{i},\xi_{j}) \right\} \left\{ 1 - \E \rho_n f(\xi_{i},\xi_{j}) \right\} 
 = \rho_n \left( 1 - \rho_n \right) ,
\\ \cov(A_{ij},A_{kl})&=\!\E_{\xi}\!\left\{\cov\!\left(A_{ij},A_{kl}\middle\vert \xi\right)\right\}\!+\!\cov_{{\xi}}\!\left\{\E\left(\!A_{ij}\middle\vert {\xi}\right),\!
\E\left(A_{kl}\,\middle\vert \,{\xi}\right)\right\}\!,(i,j) \neq (k,l).
\end{align*}
Since $ \E f\left(\xi_i,\xi_j\right)f\left(\xi_k,\xi_l\right) = \iint_{(0,1)^2} f^2(x,y)\,dx\, dy $ if $i=k$ and $ j=l$, and $ \bigl\{ \iint_{(0,1)^2} f(x,y)\,dx\, dy \bigr\}^2 $ if $i\neq k$ and $j\neq l$, we obtain when either $i \neq k $ or $ j \neq l $ that
\begin{align*}
\cov_{{\xi}}\left(A_{ij} , A_{kl} \right)&=
\cov_{{\xi}}\left\{\E(A_{ij}\,\vert\,{\xi}),\E(A_{kl}\,\middle\vert \,{\xi})\right\}\\
&=\E_{\xi}\left\{\rho_n f(\xi_{i},\xi_{j})\rho_n f(\xi_{k},\xi_{l}) \right\}-
\E_{\xi}\left\{\rho_n f(\xi_{i},\xi_{j})\right\}\E_{\xi}\left\{\rho_n f(\xi_{k},\xi_{l})\right\}\\
&\le \rho_n^2 \max\{\var f(\xi_{i},\xi_{j}) ,\var f(\xi_{k},\xi_{l})\}\\
&\textstyle \le \rho_n^2\left[\iint_{(0,1)^2} \left\{ f(x,y) \right\}^2 \,dx\, dy-\bigl\{\iint_{(0,1)^2} f(x,y)\,dx\, dy\bigr\}^2 \right].
\end{align*}
Because $\cov_{{\xi}}\left\{A_{ij} , A_{kl} \right\}=0$ when all $i,j$ and $k,l$ are distinct, and since $i\neq j$ and $k\neq l$, we obtain
\begin{align*}
\var \hat \rho_n & =\tbinom{n}{2}^{-2}\sum_{i<j} \var A_{ij}+\tbinom{n}{2}^{-2} \!\!\!\sum_{i\neq k \cup j\neq l} \cov\left(A_{ij},A_{kl}\right)\\
&\le\tbinom{n}{2}^{-2} \rho_n \left( 1 - \rho_n \right) +\tbinom{n}{2}^{-2} \!\!\!
\sum_{i\neq k \cup j\neq l} \cov\left(A_{ij},A_{kl}\right)\left[\I(i=k)+\I(i=l)\right.\\
&\left.+\I(j=k)+\I(j=l) \right]\\
&\textstyle\le \tbinom{n}{2}^{-2} \rho_n \left( 1 - \rho_n \right)+4
n\tbinom{n}{2}^{-2}\rho_n^2 \left[\iint_{(0,1)^2} \left\{ f(x,y) \right\}^2 \,dx\, dy-1 \right].
\end{align*}
The order term of $ {\cal O}(\rho_n^2/n) $ follows, as $\rho_n^2/n\ge \rho_n/n^2 \Leftrightarrow \rho_n\ge 1/n$, since $\rho_n=\omega\bigl(n^{-1}\log^3 n \bigr)$.
\end{proof}

\begin{lemma}\label{fhatbad}
Assume the setting of Theorem~\ref{graphconst}. Then
\begin{equation*}
\sup_{ \sigma \in \mathcal{M} }
\iint_{ \hat f \in \{0,1\}} \bigl| f\bigl( \sigma(x),\sigma(x) \bigr) - \hat f\left(x,y;h\right) \bigr|^2 \, dx \, dy
= \mathcal{O}_P \left( e^{ - \binom{ h_\wedge }{ 2 } \rho_n + 2 \log ( 1/ \rho_n ) } \right) .
\end{equation*}
\end{lemma}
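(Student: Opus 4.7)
The plan is to bound the squared-error integrand uniformly in $\sigma$ and $(x,y)$, and then control the Lebesgue measure of the saturated set in probability; the integral is then the product of these two controls. First, on the saturated set $\{\hat f \in \{0,1\}\}$ the block average $\bar A_{H^{-1}(x) H^{-1}(y)}$ lies in $\{0,1\}$, so $\hat f = \hat \rho_n^{-1} \bar A_{ab}$ is bounded by $\hat \rho_n^{-1} = \mathcal{O}_P(\rho_n^{-1})$ via Lemma~\ref{rhohat}. Since $f$ is H\"older on the bounded domain $(0,1)^2$ and hence bounded, $\bigl| f(\sigma(x),\sigma(y)) - \hat f(x,y;h) \bigr|^2 \leq 2 \| f \|_\infty^2 + 2 \hat \rho_n^{-2} = \mathcal{O}_P(\rho_n^{-2})$ uniformly in $\sigma \in \mathcal{M}$ and $(x,y) \in (0,1)^2$.

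Next I would bound the Lebesgue measure of the saturated set by its expectation. Conditional on $\xi$, the entries $A_{ij}$ within block $(a,b)$ are independent Bernoulli trials with means $\rho_n f(\xi_{\Pi(i)},\xi_{\Pi(j)})$. Using $f \geq f_{\min} > 0$ (Condition~\ref{cond:Holder1} of Theorem~\ref{graphconst}) together with $1-x \leq e^{-x}$,
\begin{equation*}
\Pr \bigl( \bar A_{ab} = 0 \,\big\vert\, \xi \bigr) \leq e^{ - h_{ab}^2 \rho_n f_{\min} }, \qquad \Pr \bigl( \bar A_{ab} = 1 \,\big\vert\, \xi \bigr) \leq ( \rho_n \| f \|_\infty )^{ h_{ab}^2 },
\end{equation*}
and the second term is of strictly smaller order as $\rho_n \to 0$. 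Marginalizing over $\xi$, summing against block areas $|\omega_{ab}|$, and invoking the uniform lower bound $h_{ab}^2 \geq \binom{h_\wedge}{2}$ (from Condition~3 of Theorem~\ref{graphconst}) together with $\sum_{a,b} |\omega_{ab}| = 1$ gives
\begin{equation*}
\E \Bigl[\, \bigl| \{ (x,y) : \hat f(x,y;h) \in \{0,1\} \} \bigr| \,\Bigr] \leq 2\,e^{ - \binom{h_\wedge}{2} \rho_n f_{\min} }.
\end{equation*}
Markov's inequality then delivers the area as $\mathcal{O}_P\bigl( e^{-\binom{h_\wedge}{2}\rho_n} \bigr)$, with the constant $f_{\min}$ absorbed into the asymptotic symbol.

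Combining the pointwise integrand bound with the area bound---and noting that the supremum over $\sigma \in \mathcal{M}$ is free because neither bound depends on $\sigma$---yields $\mathcal{O}_P\bigl( \rho_n^{-2} \cdot e^{-\binom{h_\wedge}{2}\rho_n} \bigr) = \mathcal{O}_P\bigl( e^{-\binom{h_\wedge}{2}\rho_n + 2\log(1/\rho_n)} \bigr)$, as claimed. The main obstacle is cleanly separating the two sources of randomness---the latent sample $\xi$ governing which Bernoulli means appear in each block, and the conditional Bernoulli outcomes $A \mid \xi$ themselves. The hypothesis $f \geq f_{\min} > 0$ makes this separation painless by rendering the conditional saturation probability independent of $\xi$, so the expectation-then-Markov step proceeds directly; a weaker lower bound on $f$ would require more delicate handling of blocks in which the $\xi$'s happen to cluster where $f$ is small.
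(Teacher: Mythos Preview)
Your argument is correct and mirrors the paper's proof closely: both bound the integrand uniformly by $\mathcal{O}_P(\rho_n^{-2})$ and then control the saturated region via the exponentially small block-saturation probability and Markov's inequality. The paper routes the second step through the per-pair sum $\sum_{i<j}\I(\bar A_{z_iz_j}\in\{0,1\})$ and invokes Lemma~\ref{lem:normalization}, whereas you compute the expected Lebesgue measure of the saturated set directly by weighting blocks with their areas $|\omega_{ab}|$; these are equivalent reformulations of the same bound. One small slip: the constant $f_{\min}$ in the exponent cannot literally be ``absorbed into the asymptotic symbol'' (since $e^{-cn}$ is not $\mathcal{O}(e^{-n})$ when $c<1$), but the paper's own statement carries the same imprecision---its proof via Lemma~\ref{lem:normalization} actually delivers $e^{-\binom{h_\wedge}{2}\rho_\wedge}$ with $\rho_\wedge$ proportional to $\rho_n f_{\min}$---so you have reproduced the argument at the paper's level of rigor.
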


\begin{proof}
We apply Lemma~\ref{lem:normalization} to control $\sum_{i<j} \I\left(\bar{A}_{ab}\in\{0,1\}\right) $ marginally, after observing that
\begin{align*}
\sup_{ \sigma \in \mathcal{M} } &
\iint_{ \hat f \in \{0,1\}} \bigl| f\bigl( \sigma(x),\sigma(x) \bigr) - \hat f\left(x,y;h\right) \bigr|^2 \, dx \, dy
 \le\iint_{ \hat f \in \{0,1\}} 2\rho_n^{-2} \, dx \, dy
\\ & = 2 \left( \rho_n n \right)^{-2} \!\!\!\! \sum_{a,b:\bar{A}_{ab}\in\{0,1\}} h_a h_b 
\leq 2 \left( \rho_n n \right)^{-2} \!\!\!\! \sum_{a\leq b:\bar{A}_{ab}\in\{0,1\}} 4h_{ab}^2 
\\
&= 8 \left( \rho_n n \right)^{-2} \sum_{i<j} \I\left(\bar{A}_{ab}\in\{0,1\}\right) .
\end{align*}
\begin{equation*}
\vspace{-2\baselineskip}
\end{equation*}
\end{proof}

\begin{lemma}\label{graphconst-D}
Assume the setting of Theorem~\ref{graphconst}. Then for any $z\in {\cal Z}_k$, 
\begin{multline}
\label{order}
\frac{ \inf_{ \sigma \in \mathcal{M} } \iint_{ \hat f \notin \{0,1\} } \rho_n^{-1} \D\left\{ \rho_n f\bigl( \sigma(x) , \sigma(y) \bigr) \,\middle\vert\middle\vert\, \rho_n \hat f\left( x , y ; h \right) \right\} \,dx \,dy }{ \iint_{ \hat f \notin \{0,1\} } f\left( x , y \right) \,dx \,dy }
\\ =\textstyle
\frac{ \sum_{ i < j : \bar A_{ z_i z_j } \notin \{0,1\} } \D\left( p_{ij} \,\middle\vert\middle\vert\, \bar A_{ z_i z_j } \right) }{ \sum_{ i < j: \bar A_{z_i z_j } \notin \{0,1\} } p_{ij} } \left\{ 1 + {\cal O}_P\left( \frac{ 1 }{ n^{\alpha / 2 } } \right) \right\} 
+ {\cal O}_P \left( \frac{ \log \left( h_\vee / \rho_n \right) }{ n^{ \alpha / 2 } } + \frac{ \log h_\vee }{ \rho_n n } \right) .
\end{multline}
\end{lemma}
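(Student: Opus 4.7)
The plan is to prove matching upper and lower bounds on the infimum on the left-hand side of~(\ref{order}). Both arguments begin by exploiting the fact that $\hat f(x,y;h) = \hat\rho_n^{-1} \bar A_{H^{-1}(x) H^{-1}(y)}$ is piecewise constant on the rectangular blocks $\omega_{ab}$ induced by $H$, so that the integral decomposes as a finite sum indexed by those $(a,b)$ with $\bar A_{ab} \notin \{0,1\}$. By Lemma~\ref{rhohat}, $\hat\rho_n/\rho_n = 1 + \mathcal{O}_P(n^{-1/2})$, which can be absorbed into the stated error terms; this reduces the task to analysing integrals of $\rho_n^{-1} \D\bigl(\rho_n f(\sigma(x),\sigma(y)) \,\|\, \bar A_{ab}\bigr)$ on each block.

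For the upper bound I would evaluate the infimum at the oracle rearrangement $\sigma^\ast \in \mathcal{M}$ induced by the rank permutation $(\cdot)^{-1}$ of $\{\xi_i\}_{i=1}^n$: namely, $\sigma^\ast$ acts piecewise affinely so that the image of $\omega_{ab}$ corresponds to the rank locations of the nodes assigned to communities $a,b$ under $\Pi_z$. Applying $\alpha$-H\"older continuity via Lemma~\ref{ftilde} replaces $f(\sigma^\ast(x),\sigma^\ast(y))$ by $f(\xi_{\Pi_z^{-1}(i)},\xi_{\Pi_z^{-1}(j)})$ at the associated lattice points with multiplicative error $1+\mathcal{O}_P(n^{-\alpha/2})$, converting the block integral into a Riemann sum whose typical term is $\rho_n^{-1} \D(p_{ij} \,\|\, \bar A_{z_i z_j})/\binom{n}{2}$. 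Dividing by the analogous Riemann sum for $\iint_{\hat f \notin \{0,1\}} f$ and using Lemma~\ref{lem:pbar-ftilde} to control $\bar f_{ab} - \rho_n^{-1} \bar p(z)_{ab}$ recovers the right-hand side of~(\ref{order}) up to the stated additive errors.

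The lower bound is the main obstacle, since \emph{a priori} some $\sigma \neq \sigma^\ast$ could achieve a strictly smaller value. My plan is to use convexity of $\D(\cdot \,\|\, q)$ in its first argument: Jensen's inequality applied on each block gives
\begin{equation*}
\iint_{\omega_{ab}} \rho_n^{-1} \D\bigl(\rho_n f(\sigma(x),\sigma(y)) \,\|\, \bar A_{ab}\bigr) \,dx\,dy
\geq |\omega_{ab}| \, \rho_n^{-1} \D\bigl(\rho_n \bar f_{ab}^\sigma \,\|\, \bar A_{ab}\bigr),
\end{equation*}
where $\bar f_{ab}^\sigma = |\omega_{ab}|^{-1} \iint_{\omega_{ab}} f(\sigma(x),\sigma(y))\,dx\,dy$. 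Because $\sigma$ is measure-preserving and $\iint f = 1$, the vector $\{\bar f_{ab}^\sigma\}$ lies in the polytope defined by $\sum_{ab} |\omega_{ab}| \bar f_{ab}^\sigma = 1$ and $\inf f \leq \bar f_{ab}^\sigma \leq \sup f$. First-order optimality analysis of the convex program of minimizing $\sum_{ab} |\omega_{ab}|\, \rho_n^{-1} \D(\rho_n \bar f_{ab}^\sigma \,\|\, \bar A_{ab})$ over this polytope, combined with a H\"older bound on how far any $\bar f_{ab}^\sigma$ can drift from $\bar f_{ab}^{\sigma^\ast}$ (each $\omega_{ab}$ has diameter $\mathcal{O}(h_\vee/n)$ so its admissible block mean under any rearrangement cannot change by more than $\mathcal{O}((h_\vee/n)^\alpha)$ without moving $f$ across a set of positive measure), forces the minimum to agree with the value at $\sigma^\ast$ up to $\mathcal{O}_P((h_\vee/n)^\alpha)$, which is dominated by the additive error already present in~(\ref{order}).

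Combining the two directions yields the claimed equality. The residual term $\mathcal{O}_P\bigl(\log(h_\vee/\rho_n)/n^{\alpha/2} + \log h_\vee / (\rho_n n)\bigr)$ arises from (i) a union bound over the at most $(n/h_\wedge)^2$ blocks, needed to make the H\"older and block-average replacements uniform in $z \in \mathcal{Z}_k$, which supplies the $\log h_\vee$ factor, and (ii) a $\log(1/\rho_n)$ penalty for converting absolute errors in the block means into relative errors in $\log \bar A_{ab}$ inside the KL. The hardest step I anticipate is the lower bound: the mass constraint alone couples the $\{\bar f_{ab}^\sigma\}$ only weakly across blocks, so sharp alignment with $\sigma^\ast$ requires genuinely exploiting both the strict convexity of the Bernoulli KL divergence and the H\"older regularity of $f$ to rule out any large reallocation of mass between blocks.
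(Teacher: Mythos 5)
Your upper-bound argument is essentially the paper's: you restrict the infimum to measure-preserving bijections that permute the $n$ subintervals $\left[(i-1)/n, i/n\right)$, choose the specific permutation $\Pi = (\cdot)^{-1} \circ \Pi_z^{-1}$ aligning ranked $\xi$'s with the community assignment, and then Taylor-expand each block integral around the discrete divergence evaluations using the H\"{o}lder condition (the paper packages this in Lemmas~\ref{f-small-domain} and~\ref{diag-term} for the numerator, and Lemma~\ref{denom} for the denominator). Two small misattributions in your sketch are worth noting: the factor $\log h_\vee$ does not come from a union bound over blocks, but from bounding $\left|\log \bar A_{ab}\right|$ and $\left|\log(1-\bar A_{ab})\right|$ by $\log\binom{h_\vee}{2}$ whenever $\bar A_{ab}$ is a nontrivial proportion; and Lemma~\ref{lem:pbar-ftilde} does not enter this lemma's proof at all, only the proof of Theorem~\ref{absolute}.

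The genuine problem is your lower-bound program. First, it is not needed: the paper's own proof of Lemma~\ref{graphconst-D} is a pure chain of $\leq$'s and establishes only the upper bound; and in the proof of Theorem~\ref{graphconst} the lemma is invoked after the mean-squared error has already been upper-bounded by the divergence integral at a specific $\sigma^\ast$, so only the upper-bound direction is ever used. The ``$=$'' in the lemma statement is loose notation for what is in fact a one-sided bound. Second, the heart of your lower-bound argument is wrong as stated. The claim that, for an arbitrary $\sigma \in \mathcal{M}$, the block average $\bar f_{ab}^\sigma$ can only drift $\mathcal{O}\bigl((h_\vee/n)^\alpha\bigr)$ from $\bar f_{ab}^{\sigma^\ast}$ because each block has small diameter is false; the H\"{o}lder condition constrains $f$, not the rearrangement. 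A measure-preserving bijection may be a global shuffle of the unit interval (e.g.\ exchange of $(0,1/2)$ and $(1/2,1)$), which can move $\bar f_{ab}^\sigma$ by an amount of order the oscillation of $f$, so $\mathcal{O}(1)$, not $\mathcal{O}\bigl((h_\vee/n)^\alpha\bigr)$. Your polytope constraints $\sum_{ab} |\omega_{ab}| \bar f_{ab}^\sigma = 1$ and $\inf f \leq \bar f_{ab}^\sigma \leq \sup f$ are far too weak to force the constrained minimizer to agree with $\sigma^\ast$, and Jensen plus first-order optimality cannot rescue this without a genuinely different control on the set of attainable block-average vectors. You correctly flag this as the hard step, but the step as proposed does not close the gap---and since the lemma is only ever used one-sidedly, it need not be closed.
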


\begin{proof}
We first treat the numerator of~\eqref{order}, whose infimum is over $\mathcal{M}$, the set of all measure-preserving bijective maps of the form $\sigma \colon [0,1] \to [0,1]$. We may write
\begin{align}
\nonumber
0 & \leq \inf_{ \sigma \in \mathcal{M} } \iint_{ \hat f \notin \{0,1\} } \rho_n^{-1} \D\left\{ \rho_n f\bigl( \sigma(x) , \sigma(y) \bigr) \,\middle\vert\middle\vert\, \rho_n \hat f\left( x , y ; h \right) \right\} \,dx \,dy
\\ \label{KLdiv-estimator} & = \textstyle \inf_{ \sigma \in \mathcal{M} } \sum_{ a,b : \bar A(z)_{ a b } \notin \{0,1\} }
\iint_{ \omega(z)_{ a b } } \rho_n^{-1} \D\left\{ \rho_n f\bigl( \sigma(x) , \sigma(y) \bigr) \,\middle\vert\middle\vert\, \bar A(z)_{ a b } \right\} \,dx \,dy ,
\end{align}
since $ \hat f $ is constant on blocks. Observe that for each individual summand in~\eqref{KLdiv-estimator}, we may write
\begin{align}
\label{eq:local-split} 
&\iint_{ \omega(z)_{ a b } } 
\rho_n^{-1} \D\left\{ \rho_n f\bigl( \sigma(x) , \sigma(y) \bigr) \,\middle\vert\middle\vert\, \bar A(z)_{ a b } \right\} \,dx \,dy \\
\nonumber
&\qquad =
\int_{ H(b-1) }^{ H(b) } \int_{ H(a-1) }^{ H(a) } \!\!\!\rho_n^{-1} \D\left\{ \rho_n f(\cdot) \,\middle\vert\middle\vert\, \bar A(z)_{ a b } \right\} \,dx \,dy
\\ &\qquad =  \nonumber
\sum_{ j = n H(b-1)+1}^{ n H(b) } 
\sum_{ i = n H(a-1)+1}^{ n H(a) } \!\!\!\!\!
\int_{\frac{j-1}{n}}^{\frac{j}{n}}
\int_{\frac{i-1}{n}}^{\frac{i}{n}} \!\!\!
\rho_n^{-1} \D\left\{ \rho_n f\bigl( \sigma(x) , \sigma(y) \bigr) \,\middle\vert\middle\vert\, \bar A(z)_{ a b } \right\} dx dy .
\end{align}
We now restrict our choice of $ \sigma \in \mathcal{M} $ to satisfy the following property:
\begin{equation}
\label{eq:perm-choice}
\int_{\frac{j-1}{n}}^{\frac{j}{n}}
\int_{\frac{i-1}{n}}^{\frac{i}{n}}\!\!\!\!\!
f\bigl( \sigma(x) , \sigma(y) \bigr) 
dx dy
=\!\!\!
\int_{\frac{\Pi(j)-1}{n}}^{\frac{\Pi(j)}{n}}
\int_{\frac{\Pi(i)-1}{n}}^{\frac{\Pi(i)}{n}}
\!\!\!\!\!f\left( x , y \right)
dx dy ,
\, 1 \leq i , j \leq n,
\end{equation}
for some permutation $ \Pi $ of $ \{ 1, \ldots, n \} $. Such a choice of measure-preserving bijection can always be made, as it simply partitions the unit interval into $ n + 1 $ subintervals of the form $\left[ (i-1)/n , i/n \right), 1 \leq i \leq n$, and permutes their order in accordance with $\Pi$. We make this choice in order to preserve the H\"older continuity of $f$ on each domain $ (x,y)\in \left( \frac{i-1}{n},\frac{i}{n} \right) \times \left( \frac{j-1}{n},\frac{j}{n} \right) $, as will be shown below.

Thus we may write, combining~\eqref{KLdiv-estimator}--\eqref{eq:perm-choice}, 
\begin{multline}
\label{eq:perm-bound}
\!\! \inf_{ \sigma \in \mathcal{M} } 
\iint_{ \hat f \notin \{0,1\} } \rho_n^{-1} \D\left\{ \rho_n f\bigl( \sigma(x) , \sigma(y) \bigr) \,\middle\vert\middle\vert\, \rho_n \hat f\left( x , y ; h \right) \right\} \,dx \,dy
\\ \!\! \leq \min_{ \Pi \in S_n} \hskip-0.5cm
\sum_{ a,b : \bar A_{ a b } \notin \{0,1\} }
\sum_{ j = n H(b-1)+1}^{ n H(b) } 
\sum_{ i = n H(a-1)+1}^{ n H(a) } \\
\cdot
\int_{\frac{\Pi(j)-1}{n}}^{\frac{\Pi(j)}{n}}
\int_{\frac{\Pi(i)-1}{n}}^{\frac{\Pi(i)}{n}}
\rho_n^{-1} \D\left\{ \rho_n f\left( x , y \right) \,\middle\vert\middle\vert\, \bar A(z)_{ a b } \right\} \,dx \,dy ,
\end{multline}
with $ S_n $ the set of permutations of $ \{ 1, \ldots, n \} $. From Lemma~\ref{f-small-domain} we then obtain
\begin{multline*}
n^2 \int_{\frac{\Pi(j)-1}{n}}^{\frac{\Pi(j)}{n}}
\int_{\frac{\Pi(i)-1}{n}}^{\frac{\Pi(i)}{n}}
\rho_n^{-1} \D\left\{ \rho_n f\left( x , y \right) \,\middle\vert\middle\vert\, \bar A(z)_{ a b } \right\} \,dx \,dy 
\\ = \rho_n^{-1} \D\left[ \rho_n f\left( \xi_{ \mybig( \Pi\left\{i\right\} \mybig) } , \xi_{ \mybig( \Pi\left\{j\right\} \mybig) } \right) \,\middle\vert\middle\vert\, \bar A(z)_{ a b } \right] + {\cal O}_P \left( \left\{ \log \left( 1 / \rho_n \right) + \log \tbinom{ h_\vee }{ 2 } \right\} n^{-\alpha / 2} \right) ,
\end{multline*}
where $ \xi_{ \left( \Pi\left\{i\right\} \right) } $ is the $ \Pi(i) $th element of the ordered sample $ \{ \xi_{(i)} \}_{i=1}^n $. Starting from~\eqref{eq:perm-bound}, we then have
\begin{align}
\nonumber
& \inf_{ \sigma \in \mathcal{M} }
\iint_{ \hat f \notin \{0,1\} } \rho_n^{-1} \D\left\{ \rho_n f\bigl( \sigma(x) , \sigma(y) \bigr) \,\middle\vert\middle\vert\, \rho_n \hat f\left( x , y ; h \right) \right\} \,dx \,dy
\\ \nonumber & \quad \leq \min_{ \Pi \in S_n} \frac{ 1 }{ n^2 } 
\sum_{ a,b : \bar A(z)_{ a b } \notin \{0,1\} } 
\sum_{ j = n H(b-1)+1}^{ n H(b) } \left[
\sum_{ i = n H(a-1)+1}^{ n H(a) \I\left( a \neq b \right) + \left( j - 1 \right) \I\left( a = b \right) } \hskip-1cm \left\{ 1 + \I\left( a = b \right) \right\}
+ \I\left( i = j \right)
\right]
\\ \nonumber &  \cdot
\rho_n^{-1} \D\left[ \rho_n f\left( \xi_{ \mybig( \Pi\left\{i\right\} \mybig) } , \xi_{ \mybig( \Pi\left\{j\right\} \mybig) } \right) \,\middle\vert\middle\vert\, \bar A(z)_{ a b } \right] 
+ {\cal O}_P \left( \left\{ \log \left( 1 / \rho_n \right) + \log \tbinom{ h_\vee }{ 2 } \right\} n^{ - \alpha / 2 } \right)
\\ \nonumber & \quad \leq \frac{ 1 }{ n^2 } \left[ 2
\!\!\!\! \!\!\!\!\! \sum_{ i < j : \bar A(z)_{ z_i z_j } \notin \{0,1\} } \hskip-0.7cm \rho_n^{-1} \D\left\{ p_{ij} \,\middle\vert\middle\vert\, \bar A(z)_{ z_i z_j } \right\} + \!\! \!\!\!\!\!\!\! \!\!\!\!\!\sum_{ 1 \leq i \leq n : \bar A(z)_{ z_i z_i } \notin \{0,1\} } \hskip-0.8cm \rho_n^{-1} \D\left\{ \rho_n f\left( \xi_i , \xi_i \right) \,\middle\vert\middle\vert\, \bar A(z)_{ z_i z_i } \right\} \right]
\\ \label{eq:perm-choice} & \hskip5.35cm + {\cal O}_P \left( \left\{ \log \left( 1 / \rho_n \right) + \log \tbinom{ h_\vee }{ 2 } \right\} n^{ - \alpha / 2 } \right) ,
\end{align}
where we have chosen $ \Pi = \left( \cdot \right)^{-1} \circ \Pi_z^{-1} $, so that $ \Pi(i) = \left( \Pi_z^{-1} \{ i \} \right)^{-1} $, with $ \left( i \right)^{-1} $ the rank of $ \xi_i $, from smallest to largest. This choice allows us to match each $ \xi_{ \left( \Pi\left\{i\right\} \right) } $ to the corresponding group assignment $ z_i $ of the $i$th network node. To see this, recall from~\eqref{eq:z-h} that $ z_i = H^{-1} \left\{ \Pi_z(i) / n \right\} , 1 \leq i \leq n $, and from~\eqref{eq:hab} and~\eqref{eq:barp} respectively that
\begin{align*}
\bar A(z)_{ ab }
& = \frac{ 1 }{ h_{ab }^{2} } \sum_{ j = n H(b-1)+1 }^{ n H(b) } \sum_{ i = n H(a-1)+1}^{ n H(a) \I\left( a \neq b \right) + \left( j - 1 \right) \I\left( a = b \right) } A_{ \Pi_z^{-1}(i) \Pi_z^{-1}(j) } ,
\\ \bar p(z)_{ ab } 
& = \frac{ 1 }{ h_{ab }^{2} } \sum_{ j = n H(b-1)+1 }^{ n H(b) } \sum_{ i = n H(a-1)+1}^{ n H(a) \I\left( a \neq b \right) + \left( j - 1 \right) \I\left( a = b \right) } \rho_n f\left(\xi_{\Pi_z^{-1}(i)},\xi_{\Pi_z^{-1}(j)} \right) .
\end{align*}
Note that $\bar p(z)_{ ab } = \E \left\{ \bar A(z)_{ ab } \,\vert\, \xi , z \right\}$.
Thus we relate each $ p_{ij} = \rho_n f\left( \xi_i , \xi_j \right)$ to the average $ \bar A(z)_{ z_i z_j } $ of the block to which it is assigned by $z$.

Continuing from~\eqref{eq:perm-choice}, we appeal to Lemma~\ref{diag-term} to bound the diagonal term, thereby obtaining
\begin{multline*}
\inf_{ \sigma \in \mathcal{M} }
\iint_{ \hat f \notin \{0,1\} } \rho_n^{-1} \D\left\{ \rho_n f\bigl( \sigma(x) , \sigma(y) \bigr) \,\middle\vert\middle\vert\, \rho_n \hat f\left( x , y ; h \right) \right\} \,dx \,dy
\\ \leq \frac{ 1 - \frac{ 1 }{ n } }{ \binom{n}{2} }
\!\!\!\! \!\! \sum_{ i < j : \bar A(z)_{ z_i z_j } \notin \{0,1\} } \hskip-0.9cm \rho_n^{-1} \D\left\{ p_{ij} \,\middle\vert\middle\vert\, \bar A(z)_{ z_i z_j } \right\} 
+ {\cal O}_P \left( \left\{ \log \left( 1 / \rho_n \right) + \log \tbinom{ h_\vee }{ 2 } \right\} n^{ - \alpha / 2 } \right.\\
\left.+ \log \tbinom{h_\vee}{2} \left( \rho_n n \right)^{-1} \right) \! .
\end{multline*}
Lemma~\ref{denom} yields the denominator of~\eqref{order}, and the result follows by taking the ratio of these terms.
\end{proof}

\begin{lemma}\label{f-small-domain}
Assume the setting of Theorem~\ref{graphconst}. Then
for $1 \leq i, j \leq n, \quad (a,b) : \bar A(z)_{ a b } \notin \{ 0 , 1 \} $
\begin{multline}
\label{eq:div-taylor}
n^2 \int_{\frac{j-1}{n}}^{\frac{j}{n}}
\int_{\frac{i-1}{n}}^{\frac{i}{n}}
\rho_n^{-1} \D\left\{ \rho_n f\left( x , y \right) \,\middle\vert\middle\vert\, \bar A(z)_{ a b } \right\} \,dx \,dy;
\\ = \rho_n^{-1} \D\left\{ \rho_n f\left( \xi_{(i)} , \xi_{(j)} \right) \,\middle\vert\middle\vert\, \bar A(z)_{ a b } \right\} + {\cal O}_P \left( \left\{ \log \left( 1 / \rho_n \right) + \log \tbinom{ h_\vee }{ 2 } \right\} n^{-\alpha / 2} \right) .
\end{multline}
\end{lemma}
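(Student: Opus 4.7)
The plan is to perform a first-order expansion of the Bernoulli divergence $\D(\cdot \,\|\, \bar A(z)_{ab})$ in its first argument around the value $\rho_n f(\xi_{(i)}, \xi_{(j)})$, and then bound the resulting derivative and the perturbation of the argument separately, using the H\"older continuity of $f$ together with the hypothesis $\bar A(z)_{ab} \notin \{0,1\}$.

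First, for $(x,y) \in \left[\tfrac{i-1}{n}, \tfrac{i}{n}\right) \times \left[\tfrac{j-1}{n}, \tfrac{j}{n}\right)$, I would write, via the fundamental theorem of calculus,
\begin{equation*}
\D\bigl(\rho_n f(x,y) \,\|\, \bar A(z)_{ab}\bigr) - \D\bigl(\rho_n f(\xi_{(i)}, \xi_{(j)}) \,\|\, \bar A(z)_{ab}\bigr) = \int_{p'}^{p} \log\frac{p^*\bigl(1-\bar A(z)_{ab}\bigr)}{\bar A(z)_{ab}(1-p^*)}\, dp^*,
\end{equation*}
with $p = \rho_n f(x,y)$ and $p' = \rho_n f(\xi_{(i)}, \xi_{(j)})$. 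Since $f$ is bounded below and $\rho_n \sup f$ is bounded away from $1$, every $p^*$ lies in $\left[\rho_n \inf f,\, \rho_n \sup f\right]$, so $|\log p^*| = O\bigl(\log(1/\rho_n)\bigr)$ and $|\log(1-p^*)| = O(1)$. Because $\bar A(z)_{ab}$ is an arithmetic average of at most $\binom{h_\vee}{2}$ Bernoulli trials, the hypothesis $\bar A(z)_{ab} \notin \{0,1\}$ forces $\bar A(z)_{ab} \in \bigl[h_{ab}^{-2},\, 1-h_{ab}^{-2}\bigr]$, so both $|\log \bar A(z)_{ab}|$ and $|\log(1 - \bar A(z)_{ab})|$ are $O\bigl(\log \tbinom{h_\vee}{2}\bigr)$. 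Hence the integrand is bounded in magnitude, uniformly in $p^*$, by $O\bigl(\log(1/\rho_n) + \log \tbinom{h_\vee}{2}\bigr)$.

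Next I would control the perturbation $|p - p'| = \rho_n |f(x,y) - f(\xi_{(i)}, \xi_{(j)})|$. The H\"older condition~\eqref{eq:Holder-defn} gives $|f(x,y) - f(\xi_{(i)}, \xi_{(j)})| \leq M\bigl(|x-\xi_{(i)}|^2 + |y-\xi_{(j)}|^2\bigr)^{\alpha/2}$; since $\xi_{(i)}$ has expectation $i/(n+1)$ and variance $O(n^{-1})$, Chebyshev yields $|\xi_{(i)} - i/n| = {\cal O}_P(n^{-1/2})$, and combined with $|x - i/n| \leq 1/n$ this gives $|f(x,y) - f(\xi_{(i)}, \xi_{(j)})| = {\cal O}_P\bigl(n^{-\alpha/2}\bigr)$, i.e.\ $|p - p'| = {\cal O}_P\bigl(\rho_n n^{-\alpha/2}\bigr)$. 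Multiplying the Lipschitz bound on the integrand by $|p-p'|$, dividing by $\rho_n$, then integrating over the square (of area $n^{-2}$) and multiplying by $n^2$ yields~\eqref{eq:div-taylor}.

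The main obstacle is that $\D(\cdot \,\|\, q)$ fails to be Lipschitz near $p \in \{0,1\}$ and near $q \in \{0,1\}$, and the scaled model forces $p \approx \rho_n \to 0$ in the sparse regime. The $\log(1/\rho_n)$ factor in the error bound is the price paid for the boundary behavior of $p$, while the $\log \binom{h_\vee}{2}$ factor is the price paid for the grid-valued nature of $\bar A(z)_{ab}$, which in the worst case lies within $1/h_{ab}^2$ of $\{0,1\}$ without actually reaching the boundary. Getting these two logarithmic dependences precisely---rather than settling for a cruder polynomial factor in $n$---is where most of the care is required; everything else is H\"older continuity applied on a square of side $1/n$ together with the $n^{-1/2}$ concentration of uniform order statistics.
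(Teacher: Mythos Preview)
Your proposal is correct and follows essentially the same approach as the paper's proof: both expand $\D(\,\cdot\,\|\,\bar A(z)_{ab})$ in its first argument, bound the first derivative $\log\frac{p(1-\bar A)}{\bar A(1-p)}$ by $O\bigl(\log(1/\rho_n)+\log\tbinom{h_\vee}{2}\bigr)$ using $\bar A(z)_{ab}\notin\{0,1\}$ and the bounds on $\rho_n f$, and control the increment $|f(x,y)-f(\xi_{(i)},\xi_{(j)})|=\mathcal{O}_P(n^{-\alpha/2})$ via H\"older continuity and concentration of the uniform order statistics. The only cosmetic difference is that you write the first-order comparison as an exact fundamental-theorem-of-calculus integral $\int_{p'}^{p}\log\frac{p^*(1-\bar A)}{\bar A(1-p^*)}\,dp^*$, whereas the paper phrases it as a Taylor expansion with an $o_P(n^{-\alpha/2})$ remainder; your formulation is arguably cleaner since it sidesteps any discussion of the second-order term.
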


\begin{proof}
The result follows from a Taylor series of the integrand of~\eqref{eq:div-taylor}, which we will show to converge everywhere on the domain of integration, as long as $ \bar A(z)_{ a b } \notin \{ 0 , 1 \} $. We begin by noting that whenever $ f \in \operatorname{\textrm{H\"older}}^\alpha(M) $, we have from Lemma~\ref{ftilde} that for all $ (x,y)\in \left( \frac{i-1}{n},\frac{i}{n} \right) \times \left( \frac{j-1}{n},\frac{j}{n} \right) $,
\begin{align*}
\E \left| f\left( x , y \right) - f\left( \xi_{(i)} , \xi_{(j)} \right) \right|
& \leq \E \left| f\left( x , y \right) - f\bigl( \tfrac{i}{n+1} , \tfrac{j}{n+1} \bigr) \right|
\\ &\qquad + \E \left| f\bigl( \tfrac{i}{n+1} , \tfrac{j}{n+1} \bigr) - f\left( \xi_{(i)} , \xi_{(j)} \right) \right|
\\ & \leq M \, \bigl\{ 2^{-1/2} ( n + 1 ) \bigr\}^{ - \alpha } + M \left\{ 2(n+2) \right\}^{ - \alpha / 2 } .
\end{align*}
From Markov's inequality, $ f\left( \xi_{(i)} , \xi_{(j)} \right) = f\left( x , y \right) + {\cal O}_P \bigl( n^{ - \alpha / 2 } \bigr) $ for every fixed $(x,y)$ in the domain of interest. Thus the following Taylor series holds whenever $ f \in \operatorname{\textrm{H\"older}}^\alpha(M) $ and $ \bar A(z)_{ a b } \notin \{ 0 , 1 \} $:
\begin{multline}
\label{eq:div-taylor-pw}
\rho_n^{-1} \D\left\{ \rho_n f\left( \xi_{(i)} , \xi_{(j)} \right) \,\middle\vert\middle\vert\, \bar A(z)_{ a b } \right\} 
= \rho_n^{-1} \D\left\{ \rho_n f\left( x , y \right) \,\middle\vert\middle\vert\, \bar A(z)_{ a b } \right\}
\\ + \log \left\{ \frac{ \rho_n f\left( x , y \right) }{ 1 - \rho_n f\left( x , y \right) } \cdot \frac{ 1 - \bar A(z)_{ a b } }{ \bar A(z)_{ a b } } \right\} \left\{ f\left( \xi_{(i)} , \xi_{(j)} \right) - f\left( x , y \right) \right\} + o_P \bigl( n^{ - \alpha / 2 } \bigr) .
\end{multline}
To bound the second term in~\eqref{eq:div-taylor-pw}, let $ l = \inf_{x\in(0,1)} f(x,x) $ and $ u = \sup_{x\in(0,1)} f(x,x) $. Since $ \bar A(z)_{ aa } \notin \{0,1\} $, we may bound the magnitudes of $ \log \bar A(z)_{ aa } , \log \left\{ 1 - \bar A(z)_{ a a } \right\}$ via $ \log \binom{h_a}{2} \leq \log \binom{ h_\vee }{ 2 } $. Then
\begin{multline}
\label{eq:Taylor-bnd-log}
\E \left| \log \left\{ \frac{ \rho_n f\left( x , y \right) }{ 1 - \rho_n f\left( x , y \right) } \cdot \frac{ 1 - \bar A(z)_{ a b } }{ \bar A(z)_{ a b } } \right\} \right|
\leq \log \left\{ ( \rho_n l )^{-1} \right\}  \\
+ \log \left\{ (1-\rho_n u )^{-1} \right\}+ 2 \log \tbinom{ h_\vee }{ 2 } .
\end{multline}
The first two terms in~\eqref{eq:Taylor-bnd-log} are bounded by hypothesis, and then we apply Markov's inequality to~\eqref{eq:div-taylor-pw}.
\end{proof}

\begin{lemma}\label{diag-term}
Assume the setting of Theorem~\ref{graphconst}. Then
\begin{multline}
\label{eq:diag-term-sum}
n^{-2 } \sum_{ 1 \leq i \leq n : \bar A(z)_{ z_i z_i } \notin \{0,1\} } \hskip-0.8cm \rho_n^{-1} \D\left\{ \rho_n f\left( \xi_i , \xi_i \right) \,\middle\vert\middle\vert\, \bar A(z)_{ z_i z_i } \right\}\\
= {\cal O}_P\left( \left\{ \log\left( 1 / \rho_n \right) + \rho_n^{-1} \log \tbinom{h_\vee}{2} \right\} n^{-1} \right).
\end{multline}
\end{lemma}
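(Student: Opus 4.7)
The plan is to obtain a purely deterministic, uniform bound on each summand $\rho_n^{-1}\D\left( \rho_n f(\xi_i,\xi_i) \,\middle\vert\middle\vert\, \bar A(z)_{z_i z_i} \right)$, and then observe that the sum has at most $n$ nonzero terms, so that the prefactor $n^{-2}$ automatically produces the $n^{-1}$ appearing in the claim. Because the resulting bound depends only on structural constants supplied by the theorem hypotheses and on $\rho_n$ and $h_\vee$, no randomness in $\xi$ needs to be tracked, and an ${\cal O}(\cdot)$ bound upgrades to ${\cal O}_P(\cdot)$ for free.

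First I would expand the Bernoulli divergence into its four logarithmic pieces, setting $p = \rho_n f(\xi_i,\xi_i)$ and $q = \bar A(z)_{z_i z_i}$:
\[
\D\left( p \,\middle\vert\middle\vert\, q \right) = p\log p - p\log q + (1-p)\log(1-p) - (1-p)\log(1-q) .
\]
Each piece is controlled by a different hypothesis of Theorem~\ref{graphconst}. Since $f$ is bounded above by some $u$ and bounded below by some $l>0$, one has $p \leq \rho_n u$ and $|\log p| \leq \log(1/\rho_n) + \log(1/l)$, so that $|p\log p| = {\cal O}(\rho_n \log(1/\rho_n))$. Because $\max_n \rho_n f$ is bounded away from $1$, the elementary inequality $-(1-p)\log(1-p) \leq p$ yields $|(1-p)\log(1-p)| = {\cal O}(\rho_n)$. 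For the $q$-dependent terms, the essential observation is that whenever $q \notin \{0,1\}$, it is a mean of $\binom{h_{z_i}}{2} \leq \binom{h_\vee}{2}$ independent Bernoulli trials, so both $q$ and $1-q$ exceed $\binom{h_\vee}{2}^{-1}$, and hence $\max\{ |\log q|, |\log(1-q)| \} \leq \log\binom{h_\vee}{2}$. This gives $|p\log q| = {\cal O}(\rho_n \log\binom{h_\vee}{2})$ and $|(1-p)\log(1-q)| = {\cal O}(\log\binom{h_\vee}{2})$.

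Adding the four contributions produces $\D\left( p \,\middle\vert\middle\vert\, q \right) = {\cal O}(\rho_n \log(1/\rho_n) + \log\binom{h_\vee}{2})$ uniformly in $i$ and in $z$, and dividing by $\rho_n$ gives $\rho_n^{-1}\D\left( p \,\middle\vert\middle\vert\, q \right) = {\cal O}(\log(1/\rho_n) + \rho_n^{-1}\log\binom{h_\vee}{2})$. Multiplying by $n^{-2}$ and summing over at most $n$ diagonal indices then delivers exactly the rate stated in the lemma. The only mild bookkeeping obstacle is verifying that $\sup f$, $\inf f$, and $\sup_n \sup \rho_n f$ are indeed absolute constants under the theorem hypotheses, so that the constants hidden inside the ${\cal O}(\cdot)$ do not vary with $n$, $z$, or $h_\vee$; every other step is routine manipulation of Bernoulli log-terms, and no concentration or probabilistic tail argument is required.
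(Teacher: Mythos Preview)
Your proposal is correct and follows essentially the same route as the paper: both decompose the divergence into its logarithmic constituents, bound the $f$-dependent pieces using $l \le f \le u$ and the $\bar A$-dependent pieces using $\bar A_{aa}, 1-\bar A_{aa} \ge \binom{h_\vee}{2}^{-1}$ whenever $\bar A_{aa}\notin\{0,1\}$, and then sum over at most $n$ diagonal terms. The only cosmetic difference is that the paper phrases the bound as one on the expectation of each summand and then invokes Markov's inequality, whereas you note (correctly, and slightly more cleanly) that the bound is already deterministic, so the ${\cal O}_P$ conclusion is immediate.
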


\begin{proof}
Let $ l = \inf_{x\in(0,1)} f(x,x) $ and $ u = \sup_{x\in(0,1)} f(x,x) $. Since $ \bar A(z)_{ aa } \notin \{0,1\} $, we may bound the magnitudes of $ \log \bar A(z)_{ aa } $ and $ \log \left\{ 1 - \bar A(z)_{ a a } \right\}$ via $ \log \binom{h_a}{2} \leq \log \binom{ h_\vee }{ 2 } $. We bound the expectation of each summand in~\eqref{eq:diag-term-sum} for $ 1 \leq i \leq n $
\begin{multline*}
\!\!\!\!\!\!\E \left[ f\left( \xi_i , \xi_i \right) \log\left\{ \frac{ \rho_n f\left( \xi_i , \xi_i \right) }{ \bar A(z)_{ z_i z_i } } \right\} + \rho_n^{-1} \left\{ 1-\rho_n f\left( \xi_i , \xi_i \right) \right\} \log\left\{ \frac{ 1-\rho_n f\left( \xi_i , \xi_i \right) }{ 1-\bar A(z)_{ z_i z_i } } \right\} \right] , 
\\ \le u \left\{ \log ( \rho_n l )^{-1} + \log \tbinom{ h_\vee }{ 2 } \right\}
+ \rho_n^{-1} \left[ \log \left\{ (1-\rho_n u )^{-1} \right\}
+ \log \tbinom{ h_\vee }{ 2 } \right]\\
= {\cal O}\left( \log\left( 1 / \rho_n \right) + \rho_n^{-1} \log h_\vee^2 \right).
\end{multline*}
The result then follows from linearity of expectation and Markov's inequality, as per Lemma~\ref{f-small-domain}.
\end{proof}

\begin{lemma}\label{denom}
Assume the setting of Theorem~\ref{graphconst}. Then
\begin{equation*}
\iint_{ \hat f \notin \{0,1\} } f\left( x , y \right) \,dx \,dy = \frac{ 1 - \frac{ 1 }{ n } }{ \rho_n \binom{n}{2} } \sum_{ i < j: \bar A(z)_{z_i z_j } \notin \{0,1\} } p_{ij} + {\cal O}_P\bigl( n^{ - \alpha / 2 } \bigr).
\end{equation*}
\end{lemma}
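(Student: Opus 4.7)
The plan is to show that both sides of the claimed identity approximate the total $\iint f\,dx\,dy = 1$ at rate $O_P(n^{-1/2})$, which is absorbed into $O_P(n^{-\alpha/2})$ since $\alpha \leq 1$. This avoids the block-by-block identification needed for the divergence in Lemma~\ref{graphconst-D}, and relies instead on two global ingredients: the saturated region $\{\hat f \in \{0,1\}\}$ has negligible measure under the conditions of Theorem~\ref{graphconst}, and the U-statistic $\binom{n}{2}^{-1}\sum_{i<j} f(\xi_i,\xi_j)$ concentrates on its mean $\iint f = 1$.

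For the left-hand side, I would write
\begin{equation*}
\iint_{\hat f \notin \{0,1\}} f\,dx\,dy = 1 - \iint_{\hat f \in \{0,1\}} f\,dx\,dy
\end{equation*}
using the normalization from~\eqref{pij-model}. The subtracted integral is bounded by $(\sup f) \cdot |\{\hat f \in \{0,1\}\}|$, and by the same block counting as in the proof of Lemma~\ref{fhatbad}, $|\{\hat f \in \{0,1\}\}| \leq 4 n^{-2}\sum_{i<j}\I(\bar A(z)_{z_i z_j}\in\{0,1\})$. Under the hypothesis $h_\wedge^2 \rho_n = \omega(\log n)$ of Theorem~\ref{graphconst}, the invocation of Lemma~\ref{lem:normalization} used in Lemma~\ref{fhatbad} bounds this by $o_P(n^{-c})$ for any fixed $c$, so that the left-hand side equals $1 + o_P(n^{-1/2})$.

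For the right-hand side, substituting $p_{ij} = \rho_n f(\xi_i, \xi_j)$ yields
\begin{equation*}
\frac{1-1/n}{\rho_n \binom{n}{2}}\sum_{i<j:\bar A\notin\{0,1\}} p_{ij} = (1-1/n)\tbinom{n}{2}^{-1}\Bigl[\sum_{i<j}f(\xi_i,\xi_j) - \sum_{i<j:\bar A\in\{0,1\}}f(\xi_i,\xi_j)\Bigr].
\end{equation*}
The first inner sum is a bounded-kernel U-statistic with mean $1$ and variance $O(1/n)$, by the same covariance accounting as in the proof of Lemma~\ref{rhohat} with $\rho_n f$ replaced by $f$: only $O(n^3)$ of the $O(n^4)$ cross-terms share an index and carry nonzero covariance, and each such covariance is $O(1)$ by boundedness of $f$. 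Chebyshev's inequality then gives $\binom{n}{2}^{-1}\sum_{i<j} f(\xi_i,\xi_j) = 1 + O_P(n^{-1/2})$. The saturated sum contributes at most $(\sup f)\binom{n}{2}^{-1}\#\{\text{saturated pairs}\} = o_P(n^{-1/2})$ by the preceding paragraph. Combining, the right-hand side equals $1 - 1/n + O_P(n^{-1/2}) = 1 + O_P(n^{-1/2})$, matching the left-hand side up to the same rate and yielding the claimed $O_P(n^{-\alpha/2})$ error since $n^{-1/2} = O(n^{-\alpha/2})$ for $\alpha \leq 1$.

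The main technical step is the U-statistic variance bound in the third paragraph, which is a direct extension of the computation in Lemma~\ref{rhohat}; the normalization $\iint f = 1$ and the boundedness of $f$ (implied by $\alpha$-H\"older continuity on the bounded domain $(0,1)^2$) are what make the LLN rate hold uniformly in the choice of admissible $z$.
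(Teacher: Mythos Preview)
Your argument is correct and takes a genuinely different route from the paper's. The paper proceeds by a direct cell-by-cell discretization: it partitions the non-saturated region $\{\hat f\notin\{0,1\}\}$ into $1/n\times 1/n$ cells and, using the H\"older continuity of $f$ (the technique of Lemma~\ref{f-small-domain}), replaces each $\int_{(j-1)/n}^{j/n}\int_{(i-1)/n}^{i/n} f\,dx\,dy$ by $n^{-2} f(\xi_{(i)},\xi_{(j)})$ at cost $O_P(n^{-\alpha/2})$, then removes the diagonal $i=j$ at cost $O(n^{-1})$. You instead show that both sides equal $1+O_P(n^{-1/2})$ separately, using only the normalization $\iint f=1$, the U-statistic concentration (this is exactly Lemma~\ref{sump} in the paper, so you may simply cite it rather than redo the covariance count), and the exponential negligibility of the saturated region via Lemma~\ref{lem:normalization}. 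Uniformity in $z$ is not an issue: the U-statistic does not depend on $z$, and Lemma~\ref{lem:normalization} is stated for all $z\in\mathcal{Z}_k$.

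Your route is the more elementary one for this particular lemma: it uses H\"older continuity only through boundedness of $f$, and in fact delivers the sharper rate $O_P(n^{-1/2})$ before embedding into $O_P(n^{-\alpha/2})$. The paper's route is more local and would still work for an integrand $g$ not integrating to a known constant, whereas your argument relies essentially on the model normalization $\iint f=1$ from~\eqref{pij-model}. In the present setting both reach the same conclusion.
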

\begin{proof}
We start by discretizing the integral. We therefore write that
\begin{multline*}
\iint_{ \hat f \notin \{0,1\} } f\left( x , y \right) \,dx \,dy
= \sum_{ a, b : \bar A(z)_{ a b } \notin \{0,1\} } \sum_{ j = n H(b-1)+1}^{ n H(b) } \sum_{ i = n H(a-1)+1}^{ n H(a) } \\
\cdot \int_{\frac{j-1}{n}}^{\frac{j}{n}}\int_{\frac{i-1}{n}}^{\frac{i}{n}} 
f\left(x,y\right)\,dx\,dy \! = \!\!\!\! \!\!\! \sum_{ \bar A(z)_{z_i z_j } \notin \{0,1\} } 
\!\! \frac{ p_{ij} }{ \rho_n n^2 } + \!\!\!\! \!\!\!\! \sum_{ \bar A(z)_{ a b } \notin \{0,1\} } \sum_{ j = n H(b-1)+1}^{ n H(b) } 
\sum_{ i = n H(a-1)+1}^{ n H(a) } \\
\cdot \int_{\frac{j-1}{n}}^{\frac{j}{n}}\int_{\frac{i-1}{n}}^{\frac{i}{n}} 
\left\{ f\left(x,y\right)-f\left(\xi_{(i)},\xi_{(j)}\right) \right\} \,dx \,dy ,
\end{multline*}
where the latter term may be bounded using the technique of Lemma~\ref{f-small-domain}, yielding
\begin{equation}
\label{squeeze1}
\textstyle
\left| \iint_{ \hat f \notin \{0,1\} } f\left( x , y \right) \,dx \,dy - \sum_{ i , j: \bar A(z)_{z_i z_j } \notin \{0,1\} } \frac{ p_{ij} }{ \rho_n n^2 } \right|
= {\cal O}_P\bigl( n^{ - \alpha / 2 } \bigr) .
\end{equation}
Note $\sum_{ i , j: \bar A(z)_{z_i z_j } \notin \{0,1\} } p_{ij}
= 2 \sum_{ i < j: \bar A(z)_{z_i z_j } \notin \{0,1\} } p_{ij}
+ \sum_{ 1 \leq i \leq n : \bar A(z)_{z_i z_j } \notin \{0,1\} } p_{ii} $, so that
\begin{equation*}
\textstyle
\E \rho_n^{-1} n^{-2} \sum_{ 1 \leq i \leq n : \bar A(z)_{z_i z_j } \notin \{0,1\} } p_{ii}
\leq n^{-2} \sum_{ i = 1 }^n \E f\left( \xi_i , \xi_i \right) = {\cal O}\bigl( n^{-1} \bigr).
\end{equation*}
Applying Markov's theorem and combining the result with~\eqref{squeeze1} then yields the stated result.
\end{proof}

\newcommand{\IRGtheorem}{\ref{excess}}
\section{Proof of Theorem~\protect\IRGtheorem{} and lemmas}
\label{sec:excessProof}

\subsection{Proof of Theorem~\protect\IRGtheorem{}}

The proof is divided into four steps, with each the subject of a technical lemma proved in Section~\ref{sec:thm-excess-aux}. 

Lemma~\ref{lem:KLConv} yields the key first step, which is to relate $ \D\left( p_{ij} \,\middle\vert\middle\vert\, \bar A_{ z_i z_j } \right) $ to $ \D\left( p_{ij} \,\middle\vert\middle\vert\, \bar p_{ z_i z_j } \right) $ for any $z \in \mathcal{Z}_k$, assuming that $ \bar A_{ z_i z_j } \notin \{0,1\} $. This ensures that both terms are finite, and hence comparable. To obtain sufficient variance reduction in this setting, every $ \bar A_{ z_i z_j } $ must concentrate to its mean $ \bar p_{ z_i z_j } $, in that the ratio of mean to standard deviation must shrink. The minimum effective block sample size $ \binom{ h_\wedge }{ 2 } \rho_\wedge $ must grow quickly enough that this takes place, even for the sparsest of all possible fitted blocks.

\begin{lemma}\label{lem:KLConv}
Assume conditions~\ref{cond:rhoAvg}--\ref{cond:nab} of Theorem~\ref{excess}, and that $ \binom{ h_\wedge }{ 2 } \rho_\wedge = \omega \bigl( \log \binom{ h_\wedge }{ 2 } \bigr) $. Then
\begin{multline*}
0 \leq \!\! \textstyle
\sum_{ i < j : \bar A_{ z_i z_j } \notin \{0,1\} } \left\{ \D\left( p_{ij} \,\middle\vert\middle\vert\, \bar A_{ z_i z_j } \right) - \D\left( p_{ij} \,\middle\vert\middle\vert\, \bar p_{ z_i z_j } \right) \right\}
\\ = \mathcal{O}_P\left( \frac{ 2 \log \left| \mathcal{Z}_k \right| + \binom{k+1}{2} }{ \binom{n}{2} \bar \rho } \sum_{i<j} p_{ij} \right) \! , \, \forall z \in \mathcal{Z}_k .
\end{multline*}
\end{lemma}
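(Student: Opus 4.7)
The plan is to collapse the inner double-divergence sum block-by-block, reducing the claim to a uniform-in-$z$ concentration statement for block sample means, which I then handle by a Bernstein-type bound together with a union bound over the finite assignment set $\mathcal{Z}_k$.

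Fix $z \in \mathcal{Z}_k$. A direct computation using $\sum_{i<j,\,z_i=a,\,z_j=b} p_{ij} = h_{ab}^2 \bar p_{ab}$ shows that, for each block $(a,b)$ with $\bar A_{ab}\notin\{0,1\}$,
\begin{equation*}
\sum_{i<j\,:\,z_i=a,\,z_j=b}\!\!\bigl\{\D(p_{ij}\,\|\,\bar A_{ab}) - \D(p_{ij}\,\|\,\bar p_{ab})\bigr\} = h_{ab}^2\,\D(\bar p_{ab}\,\|\,\bar A_{ab}),
\end{equation*}
so the sum in the lemma equals $\sum_{a\leq b\,:\,\bar A_{ab}\notin\{0,1\}} h_{ab}^2\,\D(\bar p_{ab}\,\|\,\bar A_{ab})$. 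The lower bound $\geq 0$ is then immediate from non-negativity of the Bernoulli KL divergence.

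To bound the resulting sum from above, I Taylor-expand $q \mapsto \D(\bar p_{ab}\,\|\,q)$ about $q=\bar p_{ab}$ on the event $\bar A_{ab}\notin\{0,1\}$, obtaining $h_{ab}^2\,\D(\bar p_{ab}\,\|\,\bar A_{ab}) \leq \tfrac{1}{2} h_{ab}^2(\bar A_{ab}-\bar p_{ab})^2/[\bar p_{ab}(1-\bar p_{ab})] + R_{ab}$, where the cubic remainder $R_{ab}$ is controlled on the Bernstein event $|\bar A_{ab}-\bar p_{ab}|\leq \tfrac{1}{2}\bar p_{ab}(1-\bar p_{ab})$; the complement has exponentially small probability because the minimum effective block sample size satisfies $h_{ab}^2\bar p_{ab} \geq \binom{h_\wedge}{2}\rho_\wedge = \omega(\log\binom{h_\wedge}{2})$ by hypothesis. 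Conditional on $z$, the $\binom{k+1}{2}$ quantities $\bar A_{ab}$ are independent Poisson--Binomial averages, and each quadratic summand has mean $O(1)$ (since $\mathrm{Var}(\bar A_{ab}) \leq \bar p_{ab}/h_{ab}^2$) and is sub-exponential on the scale set by its block's effective sample size. A Bernstein inequality for sums of independent sub-exponential variables therefore yields, for
\begin{equation*}
S(z) \;:=\; \sum_{a\leq b} \frac{h_{ab}^2(\bar A_{ab}-\bar p_{ab})^2}{\bar p_{ab}(1-\bar p_{ab})},
\quad
\Pr\!\bigl(S(z) > c_1\tbinom{k+1}{2} + c_2 t\bigr) \leq 2 e^{-t}, \; t > 0,
\end{equation*}
for absolute constants $c_1,c_2>0$. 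Setting $t = 2\log|\mathcal{Z}_k| + u$ and union-bounding over $\mathcal{Z}_k$ gives $\sup_{z\in\mathcal{Z}_k} S(z) = \mathcal{O}_P\bigl(\log|\mathcal{Z}_k| + \binom{k+1}{2}\bigr)$. Combining with the quadratic bound and absorbing the cubic remainder (smaller order on the same high-probability event), then using the elementary inequality $\sum_{i<j}p_{ij}/[\binom{n}{2}\bar\rho]\geq 1$ from condition~\ref{cond:rhoAvg}, produces the stated order.

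The main obstacle is securing the \emph{additive} rate $\log|\mathcal{Z}_k|+\binom{k+1}{2}$ rather than the looser \emph{multiplicative} $\binom{k+1}{2}\log|\mathcal{Z}_k|$ that a naive per-block tail bound combined with a union bound over blocks and assignments would produce. This forces chi-squared-style concentration of the aggregate $S(z)$ rather than per-block control, and uniform handling of the Gaussian-to-exponential Bernstein transition---which occurs at the block-specific scale $h_{ab}^2\bar p_{ab}$ that may vary substantially across blocks---while ensuring the cubic Taylor remainder stays subordinate to the quadratic leading term in the sparse regime where $\bar p_{ab}$ can be as small as $\rho_\wedge$.
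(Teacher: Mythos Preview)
Your proposal is correct and follows essentially the same route as the paper: the block-by-block collapse to $\sum_{a\leq b:\bar A_{ab}\notin\{0,1\}} h_{ab}^2\D(\bar p_{ab}\,\|\,\bar A_{ab})$ is identical, and both arguments then establish that this aggregate behaves like a sum of $\binom{k+1}{2}$ independent $\chi^2_1$-scale variables, to which a Bernstein-type inequality plus a union bound over $\mathcal{Z}_k$ yields the additive rate $\log|\mathcal{Z}_k|+\binom{k+1}{2}$.

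The one methodological difference worth noting is how the $\chi^2$-like tail control is obtained. You Taylor-expand $\D(\bar p_{ab}\,\|\,\cdot)$ to isolate the standardized quadratic $h_{ab}^2(\bar A_{ab}-\bar p_{ab})^2/\{\bar p_{ab}(1-\bar p_{ab})\}$, and then separately dispose of the cubic remainder on a high-probability Bernstein event. The paper instead works with the censored divergence $g(h_{ab}^2\bar A_{ab})$ directly---setting it to zero when $\bar A_{ab}\in\{0,1\}$---and bounds \emph{all} its moments by those of $\chi^2_1/2$ via a dedicated lemma (splitting the Poisson--Binomial support into a central window and Chernoff-controlled tails), then invokes the Birg\'e--Massart moment form of Bernstein. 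This sidesteps the need to carve out a separate ``good event'' for the remainder and handles the extreme non-boundary values (where $\bar A_{ab}$ is close to $0$ or $1$ and the KL can be as large as $\log h_{ab}^2$) within the moment calculation itself. Your route works, but the paper's packaging is somewhat cleaner precisely at the point you flag as the main obstacle: the tail-versus-bulk transition is absorbed into a single moment bound rather than managed through an auxiliary event whose complement must itself survive the union bound over $\mathcal{Z}_k$.
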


Our next step relies on controlling $ \Pr ( \bar A_{ z_i z_j } \!\in\! \{0,1\} ) $ uniformly in $z$, via Lemma~\ref{lem:normalization}.

\begin{lemma}\label{lem:normalization}
Assume conditions~\ref{cond:rhoAvg}--\ref{cond:nab} of Theorem~\ref{excess}. Then 
\begin{equation*}
\textstyle \sum_{ i < j } \I\left( \bar A_{ z_i z_j } \in \{0,1\} \right) 
= \mathcal{O}_P \left( e^{ - \binom{ h_\wedge }{ 2 } \rho_\wedge +\log ( 1/ \bar \rho ) } \sum_{i<j} p_{ij} \right) , \,\, \forall z \in \mathcal{Z}_k .
\end{equation*}
\end{lemma}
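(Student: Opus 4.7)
The plan is to decompose the indicator sum block by block, bound the probability that each block is saturated (all zeros or all ones) via independence of the Bernoulli edges, and convert the resulting expected count to the stated form using condition~\ref{cond:rhoAvg}. Because $\bar A_{z_i z_j}$ depends only on the block $(z_i,z_j)$ containing the pair $(i,j)$, and the number of ordered pairs $i<j$ inside block $(a,b)$ is precisely $h_{ab}^2$ as defined in~\eqref{eq:hab}, I would first rewrite
\begin{equation*}
\sum_{i<j} \I(\bar A_{z_i z_j} \in \{0,1\}) \;=\; \sum_{a\leq b} h_{ab}^2\,\I(\bar A_{ab}\in\{0,1\}).
\end{equation*}

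Conditional on $z$, the entries contributing to $\bar A_{ab}$ are independent $\operatorname{Bernoulli}(p_{ij})$ variates, so $\Pr(\bar A_{ab}=0)=\prod(1-p_{ij})\leq\exp(-h_{ab}^2\bar p_{ab})$ and, by AM--GM, $\Pr(\bar A_{ab}=1)=\prod p_{ij}\leq(\bar p_{ab})^{h_{ab}^2}$. Condition~\ref{cond:rhoMin} furnishes $\rho_\wedge\leq\bar p_{ab}\leq 1-\sqrt{\rho_\wedge}$, and since $\rho_\wedge\leq\sqrt{\rho_\wedge}$ whenever $\rho_\wedge\leq 1$, both bounds are controlled by $\exp(-h_{ab}^2\rho_\wedge)$. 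Invoking $h_{ab}^2\geq\binom{h_\wedge}{2}$ from condition~\ref{cond:nab}, together with $\sum_{a\leq b}h_{ab}^2=\binom{n}{2}$ and $\binom{n}{2}\bar\rho\leq\sum_{i<j}p_{ij}$ from condition~\ref{cond:rhoAvg}, yields the expectation bound
\begin{equation*}
\E\sum_{i<j}\I(\bar A_{z_i z_j}\in\{0,1\}) \;\leq\; 2\exp\!\left(-\tbinom{h_\wedge}{2}\rho_\wedge + \log(1/\bar\rho)\right)\sum_{i<j}p_{ij},
\end{equation*}
from which Markov's inequality immediately delivers the claimed $\mathcal{O}_P$ order for any fixed~$z$.

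The main obstacle is promoting this pointwise Markov bound to one holding uniformly across $z\in\mathcal{Z}_k$, as is needed because Theorem~\ref{excess} ultimately evaluates the sum at the data-dependent MPLE~$\hat z$. Fortunately, conditional on $z$ the per-block indicators $\I(\bar A_{ab}\in\{0,1\})$ are independent across the disjoint blocks, so I would apply a Bernstein-type concentration inequality to the weighted sum $\sum_{a\leq b}h_{ab}^2\,\I(\bar A_{ab}\in\{0,1\})$ and combine it with a union bound over $\mathcal{Z}_k$, whose log-cardinality is at most $n\log k$. The growth hypothesis $\binom{h_\wedge}{2}\rho_\wedge=\omega(\log n)$ is what makes this possible: it drives the per-block saturation probability below any polynomial rate in $n$, dominating the union-bound cost and producing the desired uniform $\mathcal{O}_P$ conclusion.
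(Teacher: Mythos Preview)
Your block decomposition, the AM--GM bound on the saturation probability, the resulting expectation bound, and the appeal to Markov's inequality are exactly what the paper does; even the final arithmetic $(1-\rho_\wedge)^{\binom{h_\wedge}{2}}\le e^{-\binom{h_\wedge}{2}\rho_\wedge}$ and the use of $\binom{n}{2}\bar\rho\le\sum_{i<j}p_{ij}$ match. So through the display giving the expectation bound, your argument and the paper's coincide.

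Where you diverge is in the last paragraph. The paper does \emph{not} invoke Bernstein concentration or a union bound over $\mathcal{Z}_k$. It simply observes that the per-pair bound $\Pr(\bar A_{z_i z_j}\in\{0,1\})\le 2(1-\rho_\wedge)^{\binom{h_\wedge}{2}}$ is the same for every $z\in\mathcal{Z}_k$, so Markov's inequality yields the stated $\mathcal{O}_P$ rate with constants not depending on $z$; the paper then asserts in one line that since the bound holds uniformly in $z$, it continues to hold after ``marginalizing out'' a random $Z$. Your concern about the data-dependent $\hat z$ is legitimate---that marginalization step is glossed over---and your proposed concentration-plus-union-bound route, exploiting the block-wise independence of the indicators and the hypothesis $\binom{h_\wedge}{2}\rho_\wedge=\omega(\log n)$, would make the uniform claim fully rigorous. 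But it is additional machinery beyond what the paper's own proof contains; if you are trying to reproduce the paper's argument, you should stop at Markov.
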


This result shows that the set of all $ \bar A_{ z_i z_j } \in \{0,1\} $ has vanishing relative cardinality relative to $ \sum_{i<j} p_{ij} $, no matter which $z \in \mathcal{Z}_k$ is chosen. It is a direct consequence of condition~\ref{cond:nab} of Theorem~\ref{excess}, which ensures that the minimum fitted block size is uniformly lower-bounded by $h_\wedge = \omega(1)$.
 
Lemma~\ref{lem:normalization} has two immediate consequences. First, we may apply it to conclude that
\begin{equation}
\label{eq:thm-denom}
\frac{ \sum_{ i < j : \bar A_{ z_i z_j } \notin \{0,1\} } p_{ij} }{ \sum_{ i<j } p_{ij} } 
= 1 + \mathcal{O}_P \left( e^{ - \binom{ h_\wedge }{ 2 } \rho_\wedge +\log ( 1/ \bar \rho ) } \right) , \,\, \forall z \in \mathcal{Z}_k .
\end{equation}
Second, it enables us to substitute for the term $ \sum_{ i < j : \bar A_{ z_i z_j } \notin \{0,1\} } \D\left( p_{ij} \,\middle\vert\middle\vert\, \bar p_{ z_i z_j } \right) $ in Lemma~\ref{lem:KLConv} as follows.

\begin{lemma}\label{lem:KLdiff}
Assume conditions~\ref{cond:rhoAvg}--\ref{cond:nab} of Theorem~\ref{excess}. Then uniformly for all $ z \in \mathcal{Z}_k $,
\begin{multline*}
0 \leq \textstyle
\sum_{ i<j } \D\left( p_{ij} \,\middle\vert\middle\vert\, \bar p_{ z_i z_j } \right) 
- \sum_{ i < j : \bar A_{ z_i z_j } \notin \{0,1\} } \D\left( p_{ij} \,\middle\vert\middle\vert\, \bar p_{ z_i z_j } \right) 
\\= \textstyle \mathcal{O}_P \left( e^{ - \binom{ h_\wedge }{ 2 } \rho_\wedge +\log ( 1/ \bar \rho ) } \sum_{i<j} p_{ij} \right) .
\end{multline*}
\end{lemma}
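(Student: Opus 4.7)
The plan is to observe that the difference to be bounded is exactly
\[
\sum_{i<j:\, \bar A_{z_i z_j} \in \{0,1\}} \D\left( p_{ij} \,\middle\vert\middle\vert\, \bar p_{z_i z_j} \right),
\]
since we have simply removed from the full sum those indices for which $\bar A_{z_i z_j} \notin \{0,1\}$. Because every summand is a (nonnegative) Kullback--Leibler divergence, the left-hand inequality is immediate. For the upper bound I would combine a deterministic per-summand estimate with the probabilistic cardinality bound of Lemma~\ref{lem:normalization}, and then verify that everything is uniform in $z \in \mathcal{Z}_k$.

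To bound each summand, I would invoke condition~\ref{cond:rhoMin} of Theorem~\ref{excess}, which asserts $\rho_\wedge \le \bar p_{z_i z_j} \le 1 - \sqrt{\rho_\wedge}$ uniformly in $i<j$ and $z \in \mathcal{Z}_k$. Because $-p\log p \ge 0$ for $p \in [0,1]$ and $1 - \bar p_{z_i z_j} \ge \sqrt{\rho_\wedge}$, we have the elementary bounds
\[
p_{ij}\log \frac{p_{ij}}{\bar p_{z_i z_j}} \le p_{ij}\log \frac{1}{\rho_\wedge}, \qquad
(1-p_{ij})\log \frac{1-p_{ij}}{1-\bar p_{z_i z_j}} \le \tfrac{1}{2}\log\frac{1}{\rho_\wedge},
\]
so that $\D\left( p_{ij} \,\middle\vert\middle\vert\, \bar p_{z_i z_j} \right) \le \tfrac{3}{2}\log(1/\rho_\wedge)$, uniformly in $(i,j)$ and $z$. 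Note that the uniformity here is deterministic and requires nothing beyond condition~\ref{cond:rhoMin}.

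Combining this with Lemma~\ref{lem:normalization}, whose cardinality bound $\mathcal{O}_P\bigl(e^{-\binom{h_\wedge}{2}\rho_\wedge+\log(1/\bar\rho)}\sum_{i<j}p_{ij}\bigr)$ is itself uniform in $z\in\mathcal{Z}_k$, yields that the bad-index sum is
\[
\mathcal{O}_P\left(\log(1/\rho_\wedge)\,e^{-\binom{h_\wedge}{2}\rho_\wedge+\log(1/\bar\rho)}\sum_{i<j}p_{ij}\right).
\]
Finally, since the hypothesis $h_\wedge^2\rho_\wedge = \omega(\log n)$ forces $\binom{h_\wedge}{2}\rho_\wedge \gg \log\log(1/\rho_\wedge)$ whenever $\rho_\wedge$ is at worst polynomially small (the only regime of interest), the extra $\log(1/\rho_\wedge)$ factor may be absorbed into the exponential by inflating its exponent by an additive $o\bigl(\binom{h_\wedge}{2}\rho_\wedge\bigr)$ term, thereby recovering the stated rate. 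I expect the main obstacle to be purely bookkeeping, namely checking that no step silently requires additional uniformity beyond what Lemma~\ref{lem:normalization} already provides; the estimates themselves are routine and no union bound over $\mathcal{Z}_k$ is introduced at this stage.
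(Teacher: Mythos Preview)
Your decomposition and the appeal to Lemma~\ref{lem:normalization} are correct, but your per-summand bound is weaker than the paper's, and the final absorption step does not actually go through.

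The paper takes a sharper route: after writing the difference as $\sum_{i<j}\D(p_{ij}\,\|\,\bar p_{z_iz_j})\,\I(\bar A_{z_iz_j}\in\{0,1\})$ and discarding the nonpositive entropy term $p_{ij}\log p_{ij}+(1-p_{ij})\log(1-p_{ij})$, it \emph{groups the remaining cross-entropy by blocks}. Because $\sum_{i\in z^{-1}(a),\,j\in z^{-1}(b)} p_{ij}=h_{ab}^2\,\bar p_{ab}$, each block's contribution collapses to the Bernoulli entropy $-h_{ab}^2\{\bar p_{ab}\log\bar p_{ab}+(1-\bar p_{ab})\log(1-\bar p_{ab})\}\le h_{ab}^2\log 2$. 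Summing gives $(\log 2)\sum_{i<j}\I(\bar A_{z_iz_j}\in\{0,1\})$ with a \emph{constant} prefactor, after which Lemma~\ref{lem:normalization} yields the stated rate directly, using only conditions~\ref{cond:rhoAvg}--\ref{cond:nab}.

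Your bound $\D(p_{ij}\,\|\,\bar p_{z_iz_j})\le\tfrac32\log(1/\rho_\wedge)$ is correct but costs an extra factor of $\log(1/\rho_\wedge)$. Your absorption argument then invokes $h_\wedge^2\rho_\wedge=\omega(\log n)$, which is not among the lemma's stated hypotheses (it is a separate assumption of Theorem~\ref{excess}). And even granting it, writing $\log(1/\rho_\wedge)\,e^{-\binom{h_\wedge}{2}\rho_\wedge}=e^{-\binom{h_\wedge}{2}\rho_\wedge+\log\log(1/\rho_\wedge)}$ does not give $\mathcal{O}\bigl(e^{-\binom{h_\wedge}{2}\rho_\wedge}\bigr)$: that would require $e^{\log\log(1/\rho_\wedge)}=\log(1/\rho_\wedge)=\mathcal{O}(1)$, which fails whenever $\rho_\wedge\to 0$. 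So your argument delivers a rate inflated by $\log(1/\rho_\wedge)$; this is harmless for the downstream use in Theorem~\ref{excess}, but it does not establish the lemma as stated. The block-averaging trick is precisely what removes this factor and makes the lemma self-contained.
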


Thus whenever all of the above quantities are $o_P(1)$, we may combine Lemmas~\ref{lem:KLConv} and~\ref{lem:KLdiff} with~\eqref{eq:thm-denom} to obtain our first claimed result: for any choice of $ z \in \mathcal{Z}_k $, deterministic or random, we have that
\begin{multline}
\label{eq:norm-risk}
\!\!\!\!\! \frac{ \sum_{ i < j : \bar A_{ z_i z_j } \notin \{0,1\} } \! \D\left( p_{ij} \! \,\middle\vert\middle\vert\, \! \bar A_{ z_i z_j } \! \right) \! - \! \sum_{ i<j } \! \D\left( p_{ij} \! \,\middle\vert\middle\vert\, \bar p_{ z_i z_j } \! \right) }{ \sum_{ i < j : \bar A_{ z_i z_j } \notin \{0,1\} } p_{ij} } \! \\ = \! \mathcal{O}_P\!\left( \tfrac{ 2 \log \left| \mathcal{Z}_k \right| + \binom{k+1}{2} }{ \binom{n}{2} \bar \rho } \! + \! e^{ - \binom{ h_\wedge }{ 2 } \rho_\wedge + \log ( 1/ \bar \rho ) } \right) \!\!\!
\end{multline}
whenever conditions~\ref{cond:rhoAvg}--\ref{cond:nab} of Theorem~\ref{excess} hold, $ \binom{ h_\wedge }{ 2 } \rho_\wedge = \omega \bigl( \log \binom{ h_\wedge }{ 2 } \bigr) $ and the argument of the right-hand side of~\eqref{eq:norm-risk} is $o_P(1)$. Under these conditions, the numerator term of~\eqref{eq:norm-risk}, when scaled by $ \sum_{ i < j } p_{ij} $, converges in probability to $0$ and hence in law, whereas~\eqref{eq:thm-denom} converges in probability to a non-zero constant. Thus by Slutsky's theorem, their ratio converges in law, and hence also in probability as per~\eqref{eq:norm-risk}. Separating terms on the left-hand side of~\eqref{eq:norm-risk}, and then multiplying the latter numerator term by $ \sum_{ i<j } p_{ij} / \sum_{ i<j } p_{ij} $, we obtain the first result of result of Theorem~\ref{excess}, as stated in~\eqref{eq:norm-risk-thm}.

We now establish sufficient conditions for~\eqref{eq:norm-risk}. We see immediately that $ \binom{ h_\wedge }{ 2 } \rho_\wedge = \omega \bigl( \log ( 1 / \bar \rho ) \bigr) $ must hold. Since Lemma~\ref{lem:KLConv} requires that $ \binom{ h_\wedge }{ 2 } \rho_\wedge = \omega \bigl( \log \binom{ h_\wedge }{ 2 } \bigr) $, we obtain the combined requirement
\begin{equation}
\label{eq:h-cond}
h_\wedge^2 \rho_\wedge = \omega \bigl( \max \left\{ \log h_\wedge^2 , \log ( 1 / \bar \rho ) \right\} \bigr) \Leftarrow h_\wedge^2 \rho_\wedge = \omega \bigl( \log n \bigr) .
\end{equation}
To see that this condition will be satisfied if the effective sample size of every possible fitted block is $ \omega \bigl( \log n \bigr) $, first note that $ h_\wedge \leq n $, and so $ \log h_\wedge^2 = \mathcal{O} \bigl( \log n \bigr) $. Now observe that because $ \rho_\wedge \leq \bar \rho$, it follows that $ h_\wedge^2 \rho_\wedge = \omega \bigl( \log h_\wedge^2 \bigr) $ implies $ h_\wedge^2 \bar \rho = \omega \bigl( \log h_\wedge^2 \bigr) $, or equivalently, $ \log ( 1 / \bar \rho ) = o \bigl( \log( h_\wedge^2 / \log h_\wedge^2 ) \bigr) $. Since $ h_\wedge \leq n $, this in turn implies $ \log ( 1 / \bar \rho ) = o \bigl( \log n \bigr)$. Thus $ h_\wedge^2 \rho_\wedge = \omega \bigl( \log n \bigr) $ implies~\eqref{eq:h-cond} as claimed.

To achieve convergence in probability,~\eqref{eq:norm-risk} also requires $ n^2 \bar \rho = \omega \bigl( \log \left| \mathcal{Z}_k \right| + \binom{k+1}{2} \bigr) $. To simplify this requirement and obtain a sufficient condition, observe that $ \log \left| \mathcal{Z}_k \right| \leq n \log k $, since $ \mathcal{Z}_k \subseteq \{ 1, \ldots, k \}^n $. Now write $ \binom{ k + 1 }{ 2 } = k^2 \left\{ 1/2 + \mathcal{O}(1) \right\} $, and let $ \bar h = n / k $. From these simplifications we obtain $ \bar \rho = \omega \bigl( \log( n / \bar h ) /n + \bar h^{-2} \bigr) $, which is implied by  $ \bar h^2 \bar \rho = \omega \bigl( \max \left\{ \bar h^2 / n , 1 \right\} \log n \bigr) $.

Finally, observe that since the results above hold uniformly over all $ z \in \mathcal{Z}_k $, they also hold for $ z = \hat z(A,\mathcal{Z}_k) $, the maximum profile likelihood estimator of $z$. The following lemma relates this choice to its oracle counterpart $\bar z(p,\mathcal{Z}_k) $---the best choice of $ z \in \mathcal{Z}_k $---enabling us to strengthen~\eqref{eq:norm-risk}.

\begin{lemma}\label{lem:uniformCons}
Assume conditions~\ref{cond:rhoAvg} and~\ref{cond:rhoMin} of Theorem~\ref{excess}. Then it follows from the arguments of Theorems~2 and~3 of \citet*{choi2012stochastic} that for any $ \hat z(A,\mathcal{Z}_k)$ and $\bar z(p,\mathcal{Z}_k) $ as per~\eqref{eq:MPLE} and~\eqref{eq:MPLEoracle},
\begin{multline*}
0 \leq \textstyle
\sum_{ i<j } \left\{ \D\left( p_{ij} \,\middle\vert\middle\vert\, \bar p_{ \hat z_i \hat z_j } \right) 
- \D\left( p_{ij} \,\middle\vert\middle\vert\, \bar p_{ \bar z_i \bar z_j } \right) \right\}
\! = \! \mathcal{O}_P \! \! \left( 
\frac{ \log \left| \mathcal{Z}_k \right| \! + \! \binom{k+1}{2} \! \log \! \left\{ \! \binom{n}{2} / \binom{k+1}{2} \! + \! 1 \! \right\} }{ \binom{n}{2} \bar \rho }  \right)\\
\textstyle
+\mathcal{O}_P \! \! \left( \frac{ \log \left( 1 / \rho_\wedge \right) \log \left| \mathcal{Z}_k \right| }{ 3 \binom{n}{2} \bar \rho } \! \left( \! 1 \! + \! \sqrt{ 1 \! + \! \frac{ 18 \binom{n}{2} \bar \rho }{ \log \left| \mathcal{Z}_k \right| } } \right) \!  \sum_{i<j} p_{ij} \! \right) \! .
\end{multline*}
\end{lemma}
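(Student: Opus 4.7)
The non-negativity is immediate: by the defining optimality property of $\bar z$ in~\eqref{eq:MPLEoracle}, $\bar z$ minimizes $R(z) := \sum_{i<j} \D\left( p_{ij} \,\middle\vert\middle\vert\, \bar p_{z_i z_j}\right)$ over $\mathcal{Z}_k$, so $R(\bar z) \le R(\hat z)$ in particular.

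For the upper bound, the plan is to adapt the proof strategy of Theorems~2 and~3 of \citet{choi2012stochastic} to the present sparse, growing-$k$ regime. Introduce the empirical counterpart $\hat R(z) := \sum_{i<j} \D\left( A_{ij} \,\middle\vert\middle\vert\, \bar A_{z_i z_j}\right)$, which by~\eqref{eq:MPLE} is minimized over $\mathcal{Z}_k$ by $\hat z$. The standard excess-risk decomposition then reads
\begin{equation*}
R(\hat z) - R(\bar z) \le \{R(\hat z) - \hat R(\hat z)\} - \{R(\bar z) - \hat R(\bar z)\} + \{\hat R(\hat z) - \hat R(\bar z)\} ,
\end{equation*}
where the last bracket is $\le 0$ by the optimality of $\hat z$ and the first two brackets can be absorbed into a single supremum $\sup_{z \in \mathcal{Z}_k} |\hat R(z) - R(z) - C|$ upon adding a $z$-free centering $C$ (for instance $\sum_{i<j} \D\left(A_{ij} \,\middle\vert\middle\vert\, p_{ij}\right)$) that cancels in the difference. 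Thus the entire task reduces to producing a uniform deviation bound for $\hat R(z) - R(z) - C$ over $z \in \mathcal{Z}_k$.

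This uniform concentration is the heart of the argument, and I would carry it out in three steps: (i)~For each fixed $z$, Taylor-expand $\D\left(A_{ij} \,\middle\vert\middle\vert\, \bar A_{z_i z_j}\right)$ about $\bar p_{z_i z_j}$. The first-order derivatives of $q \mapsto \D(p \,\vert\vert\, q)$ are bounded in magnitude by $\log(1/\rho_\wedge)$ on the admissible range $[\rho_\wedge, 1 - \sqrt{\rho_\wedge}]$ granted by condition~\ref{cond:rhoMin}; this is exactly the origin of the $\log(1/\rho_\wedge)$ overhead in the claim. (ii)~Control the per-block deviations $\bar A_{ab} - \bar p_{ab}$, which are averages of $h_{ab}^2$ independent centred Bernoullis of variance $\bar p_{ab}(1-\bar p_{ab})$, via a Bernstein-type inequality. (iii)~Union-bound over the $|\mathcal{Z}_k|$ admissible assignments and the $\binom{k+1}{2}$ per-assignment blocks. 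The resulting effective tail index is $t \asymp \log|\mathcal{Z}_k| + \binom{k+1}{2}\log\{\binom{n}{2}/\binom{k+1}{2}+1\}$, whose first piece matches the $\log|\mathcal{Z}_k|$ appearing multiplicatively in the second stated term, and whose second piece is the per-block entropy producing the first stated term. Solving the resulting Bernstein inequality, which takes the implicit form $y \le a + b\sqrt{y}$ on the effective-sample-size scale $\binom{n}{2}\bar\rho$, yields the quadratic-formula factor $1 + \sqrt{1 + 18\binom{n}{2}\bar\rho / \log|\mathcal{Z}_k|}$ in the second big-Oh term, rescaled by $\log(1/\rho_\wedge) \sum_{i<j} p_{ij}$ from step~(i).

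The main obstacle is the \emph{double randomness} of $\hat z$: since $\hat z$ depends on $A$, the block averages $\bar A_{\hat z_i \hat z_j}$ cannot be handled by any single per-block Bernstein bound, and the blocks are no longer independent. As in \citet{choi2012stochastic}, the way around this is to pay the $\log|\mathcal{Z}_k|$ price up front, bounding the deviation uniformly over $z \in \mathcal{Z}_k$; this is viable precisely because conditions~\ref{cond:rhoAvg}--\ref{cond:rhoMin} keep $\bar p_{z_i z_j}$ uniformly bounded away from $\{0,1\}$, which is what renders the Taylor coefficients of $\D$ tame and makes the Bernstein variance/mean trade-off sharp. A secondary book-keeping issue is verifying that the chosen centering $C$ is genuinely $z$-free modulo terms absorbed into the stated rate; this uses the fact that $\E\hat R(z) - R(z)$ depends on $z$ only through higher-order block-level fluctuations that are controlled by exactly the same Bernstein estimate already deployed.
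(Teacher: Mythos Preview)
Your high-level decomposition is exactly the paper's. With $\bar L(z)=\sum_{i<j}\{p_{ij}\log\bar p_{z_iz_j}+(1-p_{ij})\log(1-\bar p_{z_iz_j})\}$ one has $R(z)=-\bar L(z)+\text{(const.\ in $z$)}$ and $\hat R(z)=-L(A;z)$, so the paper likewise uses $L(A;\hat z)\ge L(A;\bar z)$ to reach $R(\hat z)-R(\bar z)=\bar L(\bar z)-\bar L(\hat z)\le 2\max_{z\in\mathcal Z_k}\bigl|L(A;z)-\bar L(z)\bigr|$. This is your bound with the centering $C=\sum_{i<j}\{p_{ij}\log p_{ij}+(1-p_{ij})\log(1-p_{ij})\}$, which is deterministic; your suggested $C=\sum_{i<j}\D(A_{ij}\,\|\,p_{ij})$ would leave an extra $z$-free random term requiring control of the individual $|\operatorname{logit}p_{ij}|$, which conditions~\ref{cond:rhoAvg}--\ref{cond:rhoMin} do not supply (they bound only block averages $\bar p_{ab}$).

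Where the arguments diverge is the concentration step. The paper simply invokes Theorem~2 of \citet{choi2012stochastic} as a black box for a two-term tail bound on $\max_z|L(A;z)-\bar L(z)|$, then simplifies using $\max_{z,i,j}|\operatorname{logit}\bar p_{z_iz_j}|\le\log(1/\rho_\wedge)$ from condition~\ref{cond:rhoMin} and $\sum_{i<j}p_{ij}\ge\binom n2\bar\rho$ from condition~\ref{cond:rhoAvg}. You propose to re-derive that bound, which is fine in principle, but your step~(i) contains a concrete error: the derivative of $q\mapsto\D(A_{ij}\,\|\,q)$ is $(q-A_{ij})/\{q(1-q)\}$, which for $A_{ij}\in\{0,1\}$ and $q\in[\rho_\wedge,1-\sqrt{\rho_\wedge}]$ is of order $1/\rho_\wedge$, not $\log(1/\rho_\wedge)$. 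Following your recipe literally would inflate the rate by roughly $\rho_\wedge^{-1}/\log(1/\rho_\wedge)$.

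The $\log(1/\rho_\wedge)$ factor emerges only \emph{after} block aggregation: since $\hat R(z)=\sum_{a\le b}h_{ab}^2\,\mathrm{Ent}(\bar A_{ab})$ with $\mathrm{Ent}(x)=-x\log x-(1-x)\log(1-x)$, the relevant first derivative is $\mathrm{Ent}'(\bar p_{ab})=-\operatorname{logit}\bar p_{ab}$, whose magnitude is indeed at most $\log(1/\rho_\wedge)$. Equivalently, the linear-in-$A$ part of $L(A;z)-\bar L(z)$ is $\sum_{i<j}(A_{ij}-p_{ij})\operatorname{logit}\bar p_{z_iz_j}$, and it is this sum that receives the Bernstein treatment in \citet{choi2012stochastic}; the remaining nonlinear piece is handled by a net over the $\binom{k+1}{2}$ block parameters, which is the source of the $\binom{k+1}{2}\log\{\binom n2/\binom{k+1}{2}+1\}$ term. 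So your three-step plan is in the right spirit, but step~(i) must operate at the block level, on the binary entropy, not term-by-term on $\D(A_{ij}\,\|\,\cdot)$, for the claimed rate to emerge.
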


Since $ \bar z(p,\mathcal{Z}_k) $ results in the minimum value of $\sum_{ i < j } \D\left( p_{ij} \,\middle\vert\middle\vert\, \bar p_{ z_i z_j } \right)$, this difference is nonnegative. Its convergence in probability to $0$ when suitably normalized is due to the maximizing properties of $ \hat z(A,\mathcal{Z}_k) $ and $ \bar z(p,\mathcal{Z}_k) $. Thus we conclude that $ \hat z(A,\mathcal{Z}_k) $ serves as an empirical proxy for $ \bar z(p,\mathcal{Z}_k) $.

To complete the proof, set $ z = \hat z(A,\mathcal{Z}_k) $ in~\eqref{eq:norm-risk} and combine it with Lemma~\ref{lem:uniformCons}. Comparing terms, we see that the latter's will dominate the rate of convergence, and so we upper-bound them using $ \bar h = n / k = \omega(1) $, subadditivity of the square root and the fact that $ \binom{n}{2} / \binom{k+1}{2} \leq \bar h^2 $. We thus obtain
\begin{align}
\nonumber
& \frac{ \sum_{ i < j : \bar A_{ \hat z_i \hat z_j } \notin \{0,1\} } \D\left( p_{ij} \,\middle\vert\middle\vert\, \bar A_{ \hat z_i \hat z_j } \right) - \min_{ z \in \mathcal{Z}_k } \sum_{ i<j } \D\left( p_{ij} \,\middle\vert\middle\vert\, \bar p_{ z_i z_j } \right) }{ \sum_{ i<j } p_{ij} } 
\\ \nonumber & =
\textstyle \mathcal{O}_P \left( \max \left[ \frac{ n \log \left( \frac{ n }{ \bar h } \right) \! + \! \binom{ \frac{ n }{ \bar h }+1}{2} \log \left\{ \bar h^2 \left( 1 \! + \! \bar h^{-2} \right) \right\} }{ \binom{n}{2} \bar \rho } , \sqrt{ \frac{ 2 \log ( \rho_\wedge^{-1} )^2 \, n \log \left( \frac{ n }{ \bar h }\right) }{ \binom{n}{2} \bar \rho } } \! \left( \! 1 \! + \! \sqrt{ \frac{ n \log \left( \frac{ n }{ \bar h } \right) }{ \frac{ 9 }{ 2 } \binom{n}{2} \bar \rho } } \right) \! \right] \right)
\\ \textstyle \nonumber & = \textstyle \mathcal{O}_P \left( \max \left[ \frac{ n \log \left( \frac{ n }{ \bar h } \right) + \frac{ n^2 }{ \bar h^2 } \left( 1 + \frac{ \bar h }{ n } \right) \log \left[ \bar h \left\{ 1 + o(1) \right\} \right] }{ n^2 \bar \rho \left\{ 1 + o(1) \right\} } , \sqrt{ \frac{ \log \left( 1 / \rho_\wedge \right)^2 \log \left( n / \bar h \right) }{ n \bar \rho \left\{ 1 + o(1) \right\} } } \left\{ 1 + o(1) \right\} \right] \right)
\\ \textstyle \nonumber & = \textstyle \mathcal{O}_P \left( \max \left[ \frac{ \bar h^{-2} \left( 1 + \bar h / n \right) \log \bar h }{ \bar \rho } , \max \left\{ \frac{ \log \left( n / \bar h \right) }{ n \bar \rho } , \sqrt{ \frac{ \log \left( 1 / \rho_\wedge \right)^2 \log \left( n / \bar h \right) }{ n \bar \rho } } \right\} \right] \right)
\\ \nonumber & = \textstyle \mathcal{O}_P \left( \max \left[ \frac{ \bar h^{-2} \left\{ 1 + \mathcal{O}(1) \right\} \log \bar h }{ \bar \rho } , \sqrt{ \frac{ \log \left( n / \bar h \right) }{ n \bar \rho } } \max \left\{ \sqrt{ \frac{ \log \left( n / \bar h \right) }{ n \bar \rho } } , \log \left( 1 / \rho_\wedge \right) \right\} \right] \right)
\\ \label{eq:conv-term} & = \textstyle \mathcal{O}_P \left( \max \left[ \frac{ \log \bar h }{ \bar h^2 \bar \rho } , \sqrt{ \frac{ \log \left( 1 / \rho_\wedge \right)^2 \log \left( n / \bar h \right) }{ n \bar \rho } } \right] \right) ,
\end{align}
where the final line follows because $ \log( n / \bar h ) = o( n \bar \rho) $ is needed for~\eqref{eq:conv-term} to be $ o_P(1) $, whereas $ \rho_\wedge \leq \rho < 1/2 $ implies that $ \log( 1 / \rho_\wedge )^2 > \log( 2 )^2 = \omega\bigl( \, \log( n / \bar h ) / ( n \bar \rho) \, \bigr)$. Thus we have derived the claimed rate of convergence, with a sufficient condition being that $ \bar h^2 \bar \rho = \omega \bigl( \max \left\{ \bar h^2 / n , 1 \right\} \log^3 n \bigr) $, since together $ \bar h^2 \bar \rho = \omega\bigl( \log n \bigr) $ and $ \rho = \omega\bigl( \log(n)^3 / n \bigr) $ imply that~\eqref{eq:conv-term} is $o_P(1)$. 

To complete the proof of Theorem~\ref{excess}, we now re-interpret the above results under the scaled exchangeable random graph model of~\eqref{pij-model}. Lemmas~\ref{lem:KLConv}--\ref{lem:uniformCons} then hold for every realized value of $\xi$, and thus the implicit conditioning on $\xi$ inherent to these results can be removed. Specifically, in Lemmas~\ref{lem:KLConv} and~\ref{lem:uniformCons}, we may marginalize~\eqref{eq:KLconcBnd} and~\eqref{eq:concIneq} respectively via the law of total probability, noting that their right-hand sides do not depend on $\xi$. For Lemmas~\ref{lem:normalization} and~\ref{lem:KLdiff}, we simply note that the bound of~\eqref{eq:AGMIneq} holds for all $\xi$.

\subsection{Proofs and auxiliary lemmas needed for Theorem~\protect\IRGtheorem{}}
\label{sec:thm-excess-aux}

\begin{proof}[Lemma~\ref{lem:KLConv}]
We write
\begin{align}
\nonumber 
&\sum_{ i < j : \bar A_{ z_i z_j } \notin \{0,1\} } 
 \left\{ \D\left( p_{ij} \,\middle\vert\middle\vert\, \bar A_{ z_i z_j } \right) - \D\left( p_{ij} \,\middle\vert\middle\vert\, \bar p_{ z_i z_j } \right) \right\}
\\ \nonumber & \quad= \sum_{ i < j : \bar A_{ z_i z_j } \notin \{0,1\} } \left\{ p_{ij} \log \left( \frac{ \bar p_{ z_i z_j } }{ \bar A_{ z_i z_j } } \right) + \left( 1 - p_{ij} \right) \log \left( \frac{ 1 - \bar p_{ z_i z_j } }{ 1 - \bar A_{ z_i z_j } } \right)
\right\} 
\\ \nonumber & \quad= \sum_{ a \leq b : \bar A_{ ab } \notin \{0,1\} } \mathop{\sum_{\,\,i \in z^{-1}(a),}}_{j \in z^{-1}(b)} \left\{ p_{ij} \log \left( \frac{ \bar p_{ z_i z_j } }{ \bar A_{ z_i z_j } } \right) + \left( 1 - p_{ij} \right) \log \left( \frac{ 1 - \bar p_{ z_i z_j } }{ 1 - \bar A_{ z_i z_j } } \right) \right\} 
\\ \nonumber &\quad = \sum_{ a \leq b : \bar A_{ ab } \notin \{0,1\} } \Biggl\{ \log \left( \frac{ \bar p_{ ab } }{ \bar A_{ ab } } \right) \mathop{\sum_{\,\,i \in z^{-1}(a),}}_{j \in z^{-1}(b)} p_{ij} + \log \left( \frac{ 1 - \bar p_{ ab } }{ 1 - \bar A_{ ab } } \right) \mathop{\sum_{\,\,i \in z^{-1}(a),}}_{j \in z^{-1}(b)} \left( 1 - p_{ij} \right) \Biggr\} 
\\ \nonumber & \quad= \sum_{ a \leq b : \bar A_{ ab } \notin \{0,1\} } \left\{ \log \left( \frac{ \bar p_{ ab } }{ \bar A_{ ab } } \right) h_{ab}^2 \bar p_{ ab } + \log \left( \frac{ 1 - \bar p_{ ab } }{ 1 - \bar A_{ ab } } \right) h_{ab}^2 \left( 1 - \bar p_{ ab } \right) \right\}
\\ \label{eq:sumKLs} & \quad= \sum_{ a \leq b : \bar A_{ ab } \notin \{0,1\} } h_{ab}^2 \D\left( \bar p_{ ab } \,\middle\vert\middle\vert\, \bar A_{ ab } \right).
\end{align}
Since~\eqref{eq:sumKLs} is a sum of Kullback--Leibler divergences, it is nonnegative. To show its convergence when suitably normalized, we appeal to Lemma~\ref{lem:divControl} below, which implies the following under conditions~\ref{cond:rhoAvg}--\ref{cond:nab} of Theorem~\ref{excess} and the hypothesis $ \binom{ h_\wedge }{ 2 } \rho_\wedge = \omega \bigl( \log \binom{ h_\wedge }{ 2 } \bigr) $: 

For every $\epsilon > 0$, eventually in $n$ and with $1^+/2$ approaching arbitrarily closely to $1/2$,
\begin{align}
\nonumber
& \textstyle\Pr \left( \max_{ z \in \mathcal{Z}_k } \sum_{ a \leq b : \bar A_{ ab } \notin \{0,1\} } h_{ab}^2 \D\left( \bar p_{ ab } \,\middle\vert\middle\vert\, \bar A_{ ab } \right) \geq \epsilon \sum_{i<j} p_{ij} \right)
\\ \nonumber & \textstyle \hskip3cm \leq \exp \left( \log \left| \mathcal{Z}_k \right| - \frac{ \left\{ \epsilon \sum_{i<j} p_{ij} - \frac{ 1^+ }{ 2 } \binom{k+1}{2} \right\}^2 }{ 2 \epsilon \sum_{i<j} p_{ij} + \frac{ 1^+ }{ 2 } \binom{k+1}{2} } \right) 
\\ \label{eq:KLconcIntermed} & \textstyle \hskip3cm \leq \exp \left( \log \left| \mathcal{Z}_k \right| - \frac{ \epsilon \sum_{i<j} p_{ij} \max \left\{ \epsilon \sum_{i<j} p_{ij} - 1^+ \binom{k+1}{2} , 0 \right\} }{ 2 \epsilon \sum_{i<j} p_{ij} + \frac{ 1^+ }{ 2 } \binom{k+1}{2} } \right) 
\\ \nonumber & \textstyle\hskip3cm \leq \exp \left( \log \left| \mathcal{Z}_k \right| - \frac{ \max \left\{ \epsilon \sum_{i<j} p_{ij} - 1^+ \binom{k+1}{2} , 0 \right\} }{ 2 + \frac{ 1^+ }{ 2 } \binom{k+1}{2} / \left( \epsilon \sum_{i<j} p_{ij} \right) } \right) 
\\ \textstyle\label{eq:KLconcBnd} & \textstyle\hskip3cm \leq \exp\left( \log \left| \mathcal{Z}_k \right| - \frac{ \max \left\{ \epsilon \binom{n}{2} \bar \rho - 1^+ \binom{k+1}{2} , 0 \right\} }{ 2 + \frac{ 1^+ }{ 2 } \binom{k+1}{2} / \left\{ \epsilon \binom{n}{2} \bar \rho \right\} } \right) ,
\end{align}
where~\eqref{eq:KLconcIntermed} follows as $ \epsilon \sum_{i<j} p_{ij} \geq 0$ and $ ( 1^+ / 2 ) \binom{k+1}{2} \geq 0$ eventually in $n$, and~\eqref{eq:KLconcBnd} follows from condition~\ref{cond:rhoAvg} of Theorem~\ref{excess}, by which $ \sum_{i<j} p_{ij}(n) \geq \binom{n}{2} \, \bar \rho(n) $ eventually in $n$.
\end{proof}

\begin{proof}[Lemma~\ref{lem:normalization}]
We will bound $ \Pr( \bar A_{ z_i z_j } \in \{0,1\} ) $ uniformly in $z$. Observe that for any $1 \leq a \leq b \leq k$, conditionally on any $z \in \mathcal{Z}_k$, we have by the arithmetic--geometric mean inequality that
\begin{align}
\nonumber
\Pr \left( \bar A_{ab} \in \{0,1\} \,\vert\, Z = z \right)
& = \Pr \left( \bar A_{ab} = 0 \,\vert\, Z = z \right) + \Pr \left( \bar A_{ab} = 1 \,\vert\, Z = z \right)
\\ \nonumber & = \mathop{\prod_{\,\,i \in z^{-1}(a),}}_{j \in z^{-1}(b)} \left( 1 - p_{ij} \right) + \mathop{\prod_{\,\,i \in z^{-1}(a),}}_{j \in z^{-1}(b)} p_{ij}
\\ \label{eq:AGMIneq} & \leq \left( 1 - \bar p(z)_{ab} \right)^{ h_{ab}^2 } + \left( \bar p(z)_{ab} \right)^{ h_{ab}^2 } .
\end{align}

Conditions~\ref{cond:rhoMin} and~\ref{cond:nab} of Theorem~\ref{excess} stipulate that for every pair $(a,b)$ and every $ z \in \mathcal{Z}_k$, eventually in $n$, $\rho_\wedge(n) \leq \bar p_{ab}(n) \leq 1 - \sqrt{ \rho_\wedge(n) } $ and $h_\wedge(n) \leq h_{a}(n)$. Hence~\eqref{eq:AGMIneq} implies that, eventually in $n$, for
$1 \leq a \leq b \leq k$
\begin{align}
\nonumber
&\Pr \left( \bar A_{ab} \in \{0,1\} \,\vert\, Z = z \right)
 \leq ( 1 - \rho_\wedge ) ^{ h_{ab}^2 } + ( 1 - \sqrt{ \rho_\wedge } ) ^{ h_{ab}^2 } ;
\\ \nonumber &\Rightarrow \max_{a \leq b} \, \Pr \left( \bar A_{ab} \in \{0,1\} \,\vert\, Z = z \right)
\leq 2 ( 1 - \rho_\wedge ) ^{ \binom{ h_\wedge }{ 2 } } ; 
\\ \label{prob1} &\Rightarrow \max_{ z \in \mathcal{Z}_k } \, \max_{ i < j } \, \Pr \left( \bar A_{z_i z_j} \in \{0,1\} \,\vert\, Z = z \right)
 \leq 2 ( 1 - \rho_\wedge ) ^{ \binom{ h_\wedge }{ 2 } } .
\end{align}

Since the conditional probability $ \Pr \big( \bar A_{z_i z_j} \in \{0,1\} \,\vert\, Z = z \big) $ is upper-bounded by~\eqref{prob1} uniformly for every value of $ z \in \mathcal{Z}_k $, this same bound also holds after marginalizing out $Z$. Thus, eventually in $n$,
\begin{equation}
\label{eq:badSetBnd}
\Pr\left( \bar A_{ z_i z_j } \in \{0,1\} \right) 
\leq 2 ( 1 - \rho_\wedge ) ^{ \binom{ h_\wedge }{ 2 } } .
\end{equation}

Applying Markov's inequality, we see that for any $\epsilon > 0$, eventually in $n$,
\begin{align*}
\Pr \left( \sum_{ i < j } \I\left( \bar A_{ z_i z_j } \in \{0,1\} \right) \geq \epsilon \sum_{i<j} p_{ij} \right) 
& \leq \frac{ \sum_{ i < j } \Pr\left( \bar A_{ z_i z_j } \in \{0,1\} \right) }{ \epsilon \sum_{i<j} p_{ij} } 
\\ & \leq \frac{ \binom{n}{2} 2 ( 1 - \rho_\wedge ) ^{ \binom{ h_\wedge }{ 2 } } }{ \epsilon \sum_{i<j} p_{ij} } 
\\ & \leq \frac{ 2 ( 1 - \rho_\wedge ) ^{ \binom{ h_\wedge }{ 2 } } }{ \epsilon \bar \rho } 
\\ & \leq \frac{ 2 \exp\left\{ - \binom{ h_\wedge }{ 2 } \rho_\wedge \right\} }{ \epsilon \bar \rho }
\\ & = \frac{ \exp\left\{ - \binom{ h_\wedge }{ 2 } \rho_\wedge + \log \left( 1 / \bar \rho \right) \right\} }{ ( \epsilon / 2 ) } ,
\end{align*}
where the second inequality follows directly from~\eqref{eq:badSetBnd}, the third inequality follows from condition~\ref{cond:rhoAvg} of Theorem~\ref{excess}, by which $ \sum_{i<j} p_{ij}(n) \geq \binom{n}{2} \, \bar \rho(n) $ eventually in $n$, and the final inequality follows from the fact that $ \log \bigl\{ ( 1 - \rho_\wedge ) ^{ \binom{ h_\wedge }{ 2 } } \bigr\} = \binom{ h_\wedge }{ 2 } \log ( 1 - \rho_\wedge ) \leq - \binom{ h_\wedge }{ 2 } \rho_\wedge$.
\end{proof}

\begin{proof}[Lemma~\ref{lem:KLdiff}]
First, we express the term of interest as a sum of nonnegative random variables:
\begin{equation*}
\sum_{ i<j } \D\left( p_{ij} \,\middle\vert\middle\vert\, \bar p_{ z_i z_j } \right) 
- \!\!\!\! \!\!\!\! \sum_{ i < j : \bar A_{ z_i z_j } \notin \{0,1\} } \!\!\!\! \!\!\!\!\!\! \D\left( p_{ij} \,\middle\vert\middle\vert\, \bar p_{ z_i z_j } \right) 
= \sum_{ i < j } \D\left( p_{ij} \,\middle\vert\middle\vert\, \bar p_{ z_i z_j } \right) \I( \bar A_{ z_i z_j } \in \{0,1\} ) .
\end{equation*}
To show the claimed convergence in probability, we write
\begin{align*}
0 &\leq \sum_{ i < j } \D\left( p_{ij} \!\! \,\middle\vert\middle\vert\, \bar p_{ z_i z_j } \right) \I( \bar A_{ z_i z_j } \in \{0,1\} )
\\ & = - \sum_{ i < j } \left\{ p_{ij} \log \left( \bar p_{ z_i z_j } \right) + ( 1 - p_{ij} ) \log \left( 1 - \bar p_{ z_i z_j } \right) \right\} \I( \bar A_{ z_i z_j } \in \{0,1\} )
\\ & \hskip1.5cm + \sum_{ i < j } \left\{ p_{ij} \log \left( p_{ij} \right) + ( 1 - p_{ij} ) \log \left( 1 - p_{ij} \right) \right\} \I( \bar A_{ z_i z_j } \in \{0,1\} )
\\ & \leq - \sum_{ i < j } \left\{ p_{ij} \log \left( \bar p_{ z_i z_j } \right) + ( 1 - p_{ij} ) \log \left( 1 - \bar p_{ z_i z_j } \right) \right\} \I( \bar A_{ z_i z_j } \in \{0,1\} )
\\ & = - \sum_{ a \leq b } \mathop{\sum_{\,\,i \in z^{-1}(a),}}_{j \in z^{-1}(b)} \left\{ p_{ij} \log \left( \bar p(z)_{ab} \right) + \left( 1 - p_{ij} \right) \log \left( 1 - \bar p( z)_{ab} \right) \right\} \I( \bar A_{ z_i z_j } \in \{0,1\} )
\\ & = - \sum_{ a \leq b } h_{ab}^2 \left\{ \bar p(z)_{ab} \log \left( \bar p(z)_{ab} \right) + \left( 1 - \bar p(z)_{ab} \right) \log \left( 1 - \bar p(z)_{ab} \right) \right\} \I( \bar A_{ab} \in \{0,1\} )
\\ & \leq \sum_{ a \leq b } h_{ab}^2 ( \log 2 ) \I( \bar A_{ab} \in \{0,1\} )
\\ & = ( \log 2 ) \sum_{ i < j } \I( \bar A_{ z_i z_j } \in \{0,1\} ) .
\end{align*}
The result then follows from Lemma~\ref{lem:normalization}, which establishes that for every $ z \in \mathcal{Z}_k $, we have $ \sum_{ i < j } \I( \bar A_{ z_i z_j } \!\in\! \{0,1\} ) = \mathcal{O}_P \bigl( e^{ - \binom{ h_\wedge }{ 2 } \rho_\wedge +\log ( 1/ \bar \rho ) } \sum_{i<j} p_{ij} \bigr) $ under conditions~\ref{cond:rhoAvg}--\ref{cond:nab} of Theorem~\ref{excess}.
\end{proof}

\begin{proof}[Lemma~\ref{lem:uniformCons}]
In the notation of \citet*{choi2012stochastic}, define for any fixed $ z \in \mathcal{Z}_k $
\begin{align*}
\bar L(z) & = \sum_{i<j} \left\{ p_{ij} \log \bar p_{ z_i z_j } + ( 1 - p_{ij} ) \log \left( 1 - \bar p_{ z_i z_j } \right) \right\} ;
\\ \Rightarrow \bar z(p,\mathcal{Z}_k) & = \operatornamewithlimits{argmax}_{ z \in \mathcal{Z}_k } \bar L(z)
= \operatornamewithlimits{argmin}_{ z \in \mathcal{Z}_k } \sum_{ i < j } \D\left( p_{ij} \,\middle\vert\middle\vert\, \bar p_{ z_i z_j } \right) .
\end{align*}
where the implication follows directly from the definition of the ``oracle'' MPLE in $ \bar z(p,\mathcal{Z}_k) $ in~\eqref{eq:MPLEoracle}. Thus
\begin{equation*}
0 \leq 
\sum_{ i<j } \left\{ \D\left( p_{ij} \,\middle\vert\middle\vert\, \bar p_{ \hat z_i \hat z_j } \right) 
- \D\left( p_{ij} \,\middle\vert\middle\vert\, \bar p_{ \bar z_i \bar z_j } \right) \right\}
= \bar L ( \bar z ) - \bar L ( \hat z ) , \quad \bar z, \hat z \in \mathcal{Z}_k .
\end{equation*}
By construction, since $ \bar z(p,\mathcal{Z}_k) $ maximizes $ \bar L (z) $ over $ \mathcal{Z}_k $, this difference is nonnegative. Similarly, from~\eqref{eq:MPLE} we see that $\hat z(A,\mathcal{Z}_k)$ maximizes $L(A; z)$ over $ \mathcal{Z}_k $, and so $ L(A; \hat z) - L(A; \bar z) \geq 0$. Hence,
\begin{align}
\nonumber
0 \leq 
\bar L ( \bar z ) - \bar L ( \hat z )
& \nonumber \leq \bar L ( \bar z ) - \bar L ( \hat z ) + \left\{ L(A; \hat z) - L(A; \bar z) \right\} , \quad \bar z, \hat z \in \mathcal{Z}_k
\\ \nonumber & = \bar L ( \bar z ) - L(A; \bar z) + L(A; \hat z) - \bar L ( \hat z ) 
\\ \label{eq:MPLETriangle} & \leq \left| \bar L ( \bar z ) - L(A; \bar z) \right| + \left| L(A; \hat z ) - \bar L ( \hat z ) \right| ,
\end{align}
and so the result will follow from~\eqref{eq:MPLETriangle} if we can show that $ \left| \bar L ( \bar z ) - L(A; \bar z) \right| $ and $ \left| L(A; \hat z ) - \bar L ( \hat z ) \right| $ both converge in probability to zero when suitably renormalized. We accomplish this in the manner of \citet*[Theorem~2]{choi2012stochastic}, who establish that $ \max_{ z \in \mathcal{Z}_k } \left| \bar L ( z ) - L(A; z) \right| / \sum_{i<j} p_{ij} $ converges as required. Since this result holds for the maximum over all $ z \in \mathcal{Z}_k$, then it must also hold for both $\hat{z}$ and $\bar{z}$, and we can therefore apply this same result twice.

In particular, Theorem~2 of \citet{choi2012stochastic} shows that for any fixed $n$, whenever $ \max_{ij} \left| \operatorname{logit} \bar p_{ z_i z_j } \right| $ is finite for all $ z \in \mathcal{Z}_k$, it holds that for all nonempty $ \mathcal{Z}_k \subseteq \{1,\ldots,k\}^n $ and any $\epsilon > 0$, 
\begin{multline}
\label{eq:concIneq}
\Pr \left( \max_{ z \in \mathcal{Z}_k } \left| L(A;z) - \bar L(z) \right| \geq 2 \epsilon \sum_{i<j} p_{ij} \right) \\
\leq \textstyle \left| \mathcal{Z}_k \right| \exp\left[ \binom{k+1}{2} \log \left\{ \binom{n}{2} / \binom{k+1}{2} + 1 \right\} - \epsilon \sum_{ i < j } p_{ij} \right]
\\ + \sum_{ z \in \mathcal{Z}_k } 2 \exp\left\{ - \frac{ \big( \epsilon \sum_{ i < j } p_{ij} \big)^2 / 2 }{ \sum_{ i<j } p_{ij} \left| \operatorname{logit} \bar p_{ z_i z_j } \right|^2 + (1/3) \big( \epsilon \sum_{ i < j } p_{ij} \big) \max_{i<j} \left| \operatorname{logit} \bar p_{ z_i z_j } \right| } \right\} .
\end{multline}
From condition~\ref{cond:rhoMin} of Theorem~\ref{excess}, we have that each $ p_{ij} (n) \in (0,1) $ eventually in $n$. This implies that $ \max_{ij} \left| \operatorname{logit} \bar p_{ z_i z_j }(n) \right| $ will eventually be finite for all $ z \in \mathcal{Z}_k$, and thus~\eqref{eq:concIneq} holds eventually in $n$.

To simplify the right-hand side of~\eqref{eq:concIneq}, we upper-bound $ \left| \operatorname{logit} \bar p_{ z_i z_j } \right| $ via $\max_{ i < j } \left| \operatorname{logit} \bar p_{ z_i z_j } \right| $, which allows a factor of $ \sum_{ i < j } p_{ij} $ to be canceled:
\begin{multline*}
\Pr \left( \max_{ z \in \mathcal{Z}_k } \left| L(A;z) - \bar L(z) \right| \geq 2 \epsilon \sum_{i<j} p_{ij} \right) 
\leq \textstyle \left| \mathcal{Z}_k \right| \\
\textstyle \exp\left[ \binom{k+1}{2} \log \left\{ \binom{n}{2} / \binom{k+1}{2} + 1 \right\} - \epsilon \sum_{ i < j } p_{ij} \right]
\\ + \sum_{ z \in \mathcal{Z}_k } 2 \exp\left\{ - \frac{ ( \epsilon^2 / 2 ) \sum_{ i < j } p_{ij} }{ \max_{i<j} \left| \operatorname{logit} \bar p_{ z_i z_j } \right|^2 + ( \epsilon / 3 ) \max_{i<j} \left| \operatorname{logit} \bar p_{ z_i z_j } \right| } \right\} .
\end{multline*}
Next, we upper-bound $ \max_{ i < j } \left| \operatorname{logit} \bar p_{ z_i z_j } \right| $ uniformly in $z$ via \\
$ \max_{z \in \mathcal{Z}_k} \left\{ \max_{ i < j } \left| \operatorname{logit} \bar p_{ z_i z_j } \right| \right\} $. This highlights the importance of bounding $p_{ij}$ away from $0$ and $1$. We may now sum over $ z \in \mathcal{Z}_k $ to obtain
\begin{multline*}
\Pr \left( \max_{ z \in \mathcal{Z}_k } \left| L(A;z) - \bar L(z) \right| \geq 2 \epsilon \sum_{i<j} p_{ij} \right) 
\leq \textstyle \left| \mathcal{Z}_k \right| \\
\exp\left[ \binom{k+1}{2} \log \left\{ \binom{n}{2} / \binom{k+1}{2} + 1 \right\} - \epsilon \sum_{ i < j } p_{ij} \right]
\\ \textstyle + 2 \left| \mathcal{Z}_k \right| \exp\left\{ - \frac{ ( \epsilon^2 / 2 ) \sum_{ i < j } p_{ij} }{ \left[ \max_{z \in \mathcal{Z}_k} \left\{ \max_{ i < j } \left| \operatorname{logit} \bar p_{ z_i z_j } \right| \right\} \right]^2 + ( \epsilon / 3 ) \max_{z \in \mathcal{Z}_k} \left\{ \max_{ i < j } \left| \operatorname{logit} \bar p_{ z_i z_j } \right| \right\} } \right\} .
\end{multline*}
Condition~\ref{cond:rhoMin} stipulates that every $ \bar p_{ z_i z_j } $ satisfies $\rho_\wedge(n) \leq \bar p_{ z_i z_j }(n) \leq 1 - \sqrt{ \rho_\wedge(n) } $ eventually in $n$, so
\begin{align*}
\max_{z \in \mathcal{Z}_k} &\left\{ \max_{ i < j } \left| \operatorname{logit} \bar p_{ z_i z_j }(n) \right| \right\} 
 = \max_{z \in \mathcal{Z}_k} \left\{ \max_{ i < j } \left| \log \left( \frac{ \bar p_{ z_i z_j }(n) }{ 1 - \bar p_{ z_i z_j }(n) } \right) \right| \right\}
\\ & = \max_{z \in \mathcal{Z}_k} \left[ \max_{ i < j } \left\{ \max \, \log \left( \frac{ \bar p_{ z_i z_j }(n) }{ 1 - \bar p_{ z_i z_j }(n) } \right), \log \left( \frac{ 1 - \bar p_{ z_i z_j }(n) }{ \bar p_{ z_i z_j }(n) } \right) \right\} \right] 
\\ & \leq \max_{z \in \mathcal{Z}_k} \left[ \max_{ i < j } \left\{ \max \, \log \left( \frac{ 1 - \sqrt{ \rho_\wedge(n) } }{ \sqrt{ \rho_\wedge(n) } } \right), \log \left( \frac{ 1 - \rho_\wedge(n) }{ \rho_\wedge(n) } \right) \right\} \right]
\\ & \leq \log \left\{ 1 / \rho_\wedge(n) \right\} ,
\end{align*}
which is finite, as condition~\ref{cond:rhoAvg} specifies that $0 < \rho_{\wedge}(n) < 1/2$ for all $n$.

Finally, condition~\ref{cond:rhoAvg} of Theorem~\ref{excess} ensures that $ \binom{n}{2} \, \bar \rho(n) \leq \sum_{i<j} p_{ij}(n)$ eventually in $n$. Thus, recalling~\eqref{eq:MPLETriangle}, we obtain the claimed result, since we have shown that for all $n$ sufficiently large,
\begin{align*}
& \Pr \left( \max_{ z \in \mathcal{Z}_k } \left| L(A;z) - \bar L(z) \right| \geq 2 \epsilon \sum_{i<j} p_{ij} \right) 
\\
&\qquad \leq \textstyle \exp\left[ \log \left| \mathcal{Z}_k \right| + \binom{k+1}{2} \log \left\{ \binom{n}{2} / \binom{k+1}{2} + 1 \right\} - \epsilon \tbinom{n}{2} \bar \rho \right] 
\\ & \hskip2cm + 2 \exp\left\{ \log \left| \mathcal{Z}_k \right| - \frac{ \binom{n}{2} \bar \rho }{ \log \left( 1 / \rho_\wedge \right)^2 } \left( \frac{ \epsilon^2 / 2 }{ 1 + ( \epsilon / 3) / \log \left( 1 / \rho_\wedge \right) } \right) \right\} 
\\ & \leq 4 \textstyle \exp\left\{ \log \left| \mathcal{Z}_k \right| \phantom{\frac{ - \binom{n}{2} \bar \rho }{ \log \left( 1 / \rho_\wedge \right)^2 } \left( \frac{ \epsilon^2 / 2 }{ 1 + ( \epsilon / 3) / \log \left( 1 / \rho_\wedge \right) } \right)} \right.
\\ & \left. + \max \left[ \textstyle \binom{k+1}{2} \log \left\{ \binom{n}{2} / \binom{k+1}{2} + 1 \right\} - \epsilon \tbinom{n}{2} \bar \rho, \frac{ - \binom{n}{2} \bar \rho }{ \log \left( 1 / \rho_\wedge \right)^2 } \left( \frac{ \epsilon^2 / 2 }{ 1 + ( \epsilon / 3) / \log \left( 1 / \rho_\wedge \right) } \right) \right] \right\}
.
\end{align*}
\begin{equation*}
\vspace{-2\baselineskip}
\end{equation*}
\end{proof}

\begin{lemma}\label{lem:divControl}
Assume conditions~\ref{cond:rhoAvg}--\ref{cond:nab} of Theorem~\ref{excess} and the hypothesis $ \binom{ h_\wedge }{ 2 } \rho_\wedge = \omega \bigl( \log \binom{ h_\wedge }{ 2 } \bigr) $, which together ensure that for every $ z \in \mathcal{Z}_k $,
\begin{equation}
\label{eq:essRate}
\frac{ \sqrt{ \log \left( h_{ab}^2 \right) / h_{ab}^2 } }{ \min \left( \bar p_{ab} , 1 - \bar p_{ab} \right) / \sqrt{ \bar p_{ab} } } = o(1) , \quad 1 \leq a \leq b \leq k .
\end{equation}

Then for every $\epsilon > 0$, we have eventually in $n$ that
\begin{equation*}
\Pr \left( \max_{ z \in \mathcal{Z}_k } \sum_{ a \leq b : \bar A_{ ab } \notin \{0,1\} } h_{ab}^2 \D\left( \bar p_{ ab } \,\middle\vert\middle\vert\, \bar A_{ ab } \right) \geq \epsilon \right)
\leq \exp \left( \log \left| \mathcal{Z}_k \right| - \frac{ \left\{ \epsilon - \frac{ 1^+ }{ 2 } \binom{k+1}{2} \right\}^2 }{ 2 \epsilon + \frac{ 1^+ }{ 2 } \binom{k+1}{2} } \right) ,
\end{equation*}
with $ 1^+ / 2 $ approaching arbitrarily closely to $1/2$ from above, at the rate given by~\eqref{eq:essRate}.
\end{lemma}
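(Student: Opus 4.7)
The plan is to prove the concentration of
\begin{equation*}
S(z) := \sum_{a \leq b:\, \bar A_{ab} \notin \{0,1\}} h_{ab}^2 \D\bigl(\bar p_{ab} \,\|\, \bar A_{ab}\bigr)
\end{equation*}
uniformly in $z \in \mathcal{Z}_k$ by first analyzing a fixed $z$ via a Bernstein-type tail bound on $S(z)$, and then passing to a union bound over $\mathcal{Z}_k$. The reason a fixed-$z$ analysis is tractable is that, conditionally on $z$, the summands indexed by distinct pairs $(a,b)$ with $a \leq b$ are built from disjoint subsets of the independent Bernoulli entries $\{A_{ij}\}_{i<j}$, and hence are mutually independent. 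So $S(z)$ is a sum of at most $\binom{k+1}{2}$ independent nonnegative random variables.

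For each surviving block I will use a second-order Taylor expansion of the Bernoulli KL divergence around $q = \bar p_{ab}$. Since $(\partial/\partial q)\D(p\|q)\vert_{q=p}=0$ and $(\partial^2/\partial q^2)\D(p\|q)\vert_{q=p}=1/[p(1-p)]$, one obtains
\begin{equation*}
h_{ab}^2 \D\bigl(\bar p_{ab} \,\|\, \bar A_{ab}\bigr) = \frac{h_{ab}^2 (\bar A_{ab} - \bar p_{ab})^2}{2\bar p_{ab}(1-\bar p_{ab})}\bigl\{1 + o(1)\bigr\},
\end{equation*}
with a cubic remainder whose relative magnitude is governed by $|\bar A_{ab}-\bar p_{ab}|/\min(\bar p_{ab},1-\bar p_{ab})$. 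Conditional on $z$, $\bar A_{ab}$ is an average of $h_{ab}^2$ independent Bernoulli trials with overall mean $\bar p_{ab}$, so by Bernstein's inequality its deviations are of order $\sqrt{\bar p_{ab}\log(h_{ab}^2)/h_{ab}^2}$ with high probability; condition \eqref{eq:essRate} is precisely what makes this deviation $o(\min(\bar p_{ab},1-\bar p_{ab}))$ uniformly over blocks. Combined with a Lindeberg--Feller central limit theorem, this forces the leading quadratic term $h_{ab}^2(\bar A_{ab}-\bar p_{ab})^2/[\bar p_{ab}(1-\bar p_{ab})]$ to behave as a $\chi_1^2$ variate, so each surviving $h_{ab}^2\D(\bar p_{ab}\|\bar A_{ab})$ has expectation approaching $1/2$, and the overall mean satisfies $\E S(z) \leq (1^+/2)\binom{k+1}{2}$, with $1^+/2 \to 1/2$ at the rate given by \eqref{eq:essRate}.

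Having controlled the mean, the next step is to bound the tail. Each block-wise contribution is approximately a shifted $\chi_1^2/2$, and therefore sub-exponential with variance proxy of order $1/2$ and a scale parameter of order one. Summing over at most $\binom{k+1}{2}$ independent blocks, a Bernstein/Bennett inequality for sums of independent sub-gamma random variables yields a tail of the form
\begin{equation*}
\Pr\bigl(S(z) \geq \epsilon\bigr) \leq \exp\left(-\frac{\bigl\{\epsilon - (1^+/2)\tbinom{k+1}{2}\bigr\}^2}{2\epsilon + (1^+/2)\tbinom{k+1}{2}}\right)
\end{equation*}
whenever $\epsilon$ exceeds the mean. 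A union bound over $z \in \mathcal{Z}_k$ then multiplies this tail by $|\mathcal{Z}_k|$ and produces precisely the stated inequality.

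The main obstacle will be controlling the Taylor remainder uniformly on the event $\{\bar A_{ab} \notin \{0,1\}\}$ across all blocks and all $z \in \mathcal{Z}_k$ simultaneously: the cubic correction to the quadratic approximation of $\D$ can only be absorbed into the sub-gamma parameters of each block if the deviation $|\bar A_{ab}-\bar p_{ab}|$ is small relative to $\min(\bar p_{ab}, 1-\bar p_{ab})$ with overwhelming probability, and it is precisely \eqref{eq:essRate} that delivers this by balancing the standard-error scale of $\bar A_{ab}$ against the distance of $\bar p_{ab}$ from the boundary. The same hypothesis ties together the mean shift $1^+/2 \to 1/2$, the validity of the $\chi_1^2$ block approximation, and the sub-gamma tail parameters; once this uniform Taylor control is in hand, the Bernstein inequality and union bound complete the argument routinely.
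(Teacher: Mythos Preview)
Your overall architecture matches the paper's proof exactly: for fixed $z$ the block contributions are independent, each behaves like a $\chi_1^2/2$ variate, a Bernstein-type inequality controls the sum, and a union bound over $\mathcal{Z}_k$ finishes. The paper executes this via the Birg\'e--Massart Bernstein lemma after verifying the moment condition $\E[g(h_{ab}^2\bar A_{ab})^m]\le \tfrac{m!}{2}v^2 c^{m-2}$ with $v^2=3^+/4$, $c=1$, and then substitutes the mean bound $\E Y\le(1^+/2)\binom{k+1}{2}$.

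There is, however, a real gap in your justification of the sub-gamma step. Invoking Lindeberg--Feller gives only convergence in distribution of the standardized block sum to a normal, hence of the quadratic to $\chi_1^2$; it does not give moment bounds, and ``approximately $\chi_1^2/2$, therefore sub-exponential with these parameters'' is not a valid inference. Likewise, showing the Taylor remainder is small on a high-probability event does not control $\E[g^m]$: the tail event can still dominate moments unless you bound it separately. The paper closes this gap with a dedicated moment lemma (its Lemma~\ref{PB-KL}), which splits $\E[g(X)^m]$ into a central region $\{k_1+1,\dots,k_2-1\}$ where the quadratic Taylor expansion holds deterministically, and two tail regions whose contribution is killed by Chernoff bounds on $\Pr(X\le k_1)$, $\Pr(X\ge k_2)$ against the worst-case values $g(1)^m$, $g(n-1)^m$. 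The central region is then bounded by even Binomial central moments via convex ordering (Poisson--Binomial $\preceq_{\mathrm{cx}}$ Binomial), yielding $\E[g^m]\le \Gamma(m+\tfrac12)/\sqrt\pi\,\{1+o(1)\}$ uniformly in $m$, from which the Bernstein constants $v^2=3^+/4$, $c=1$ and the exact denominator $2\epsilon+(1^+/2)\binom{k+1}{2}$ follow. Your proposal needs this moment computation, or an equivalent direct MGF bound, to replace the CLT heuristic.
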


\begin{proof}
Observe that for any fixed $ z \in \mathcal{Z}_k $, we may re-express \\
$\sum_{ a \leq b : \bar A_{ ab } \notin \{0,1\} } h_{ab}^2 \D\left( \bar p_{ ab } \,\middle\vert\middle\vert\, \bar A_{ ab } \right)$ as a sum of the terms whose moments will be bounded by Lemma~\ref{PB-KL}:
\begin{equation*}
\sum_{ a \leq b : \bar A_{ ab } \notin \{0,1\} } h_{ab}^2 \D\left( \bar p_{ ab } \,\middle\vert\middle\vert\, \bar A_{ ab } \right) 
= \sum_{ a \leq b } g\left( h_{ab}^2 \bar A_{ ab } \right) , \quad z \in \mathcal{Z}_k \text{ fixed} .
\end{equation*}
Here, setting $ X_n = h_{ab}^2 \bar A_{ ab } $ in~\eqref{eq:fXn} of Lemma~\ref{PB-KL}, we define $ g\left( h_{ab}^2 \bar A_{ ab } \right) $ as
\begin{multline*}
g\left( h_{ab}^2 \bar A_{ ab } \right) 
\\= 
\begin{cases}
h_{ab}^2 \left\{ \bar p_{ab} \log \left( \frac{ \bar p_{ab} }{ \bar A_{ab} } \right) + ( 1 - \bar p_{ab} ) \log \left( \frac{ 1 - \bar p_{ab} }{ 1 - \bar A_{ab} } \right) \right\} & \text{if $ h_{ab}^2 \bar A_{ ab } \in \{1, \ldots, h_{ab}^2 - 1 \} $,}
\\ 0 & \text{if $ h_{ab}^2 \bar A_{ ab } \in \{0, h_{ab}^2 \} $.}
\end{cases}
\end{multline*}
By hypothesis, the conditions of Lemma~\ref{PB-KL} apply for all $ 1 \leq a \leq b \leq k $ and every $ z \in \mathcal{Z}_k $, and so each $g\left( h_{ab}^2 \bar A_{ ab } \right)$ behaves like a chi-square variate on $1$ degree of freedom in terms of its $m$th moment where $m = 1, 2, \ldots$ 
\begin{equation}
\label{eq:momentCondX}
\E \left\{ g\left( h_{ab}^2 \bar A_{ ab } \right)^m \right\} \leq \frac{ \Gamma\left( m + \frac{1}{2} \right) }{ \sqrt{\pi} } \left\{ 1 + \mathcal{O} \left( \frac{ \sqrt{ \log \left( h_{ab}^2 \right) / h_{ab}^2 } }{ \min \left( \bar p_{ab} , 1 - \bar p_{ab} \right) / \sqrt{ \bar p_{ab} } } \right) \right\} .
\end{equation}

Controlling the moments of $g\left( h_{ab}^2 \bar A_{ ab } \right)$ enables us to apply a Bernstein concentration inequality due to \citet[Lemma~8]{birge1998minimum}. To do so requires the existence of constants $v^2$ and $c$ such that
\begin{equation}
\label{eq:BernsMomentCondX}
\tbinom{k+1}{2}^{-1} \sum_{a \leq b} \E \left\{ g\left( h_{ab}^2 \bar A_{ ab } \right)^m \right\} 
\leq \frac{m!}{2} v^2 c^{m-2} , \quad m = 2, 3, \ldots .
\end{equation}
By hypothesis, 
\begin{align*}
\frac{ \Gamma\left( m + \frac{1}{2} \right) }{ \sqrt{\pi} } \left\{ 1 + \mathcal{O} \left( \frac{ \sqrt{ \log \left( h_{ab}^2 \right) / h_{ab}^2 } }{ \min \left( \bar p_{ab} , 1 - \bar p_{ab} \right) / \sqrt{ \bar p_{ab} } } \right) \right\} 
& = \frac{ \Gamma\left( m + \frac{1}{2} \right) }{ \sqrt{\pi} } \left\{ 1 + o(1) \right\}
\\ & < \frac{ 3 }{ 4} + \delta ,
\end{align*}
eventually in $n$, for every $ \delta > 0 $. Thus we fix $ v^2$ arbitrarily close to $ 3/4 $, and write $ v^2 = 3^+/4$. To ensure that~\eqref{eq:BernsMomentCondX} is satisfied for each $m$, we then let $ c = 1 $. 

We can see from~\eqref{eq:momentCondX} that these choices of $v^2,c$ yield
\begin{align*}
\tbinom{k+1}{2}^{-1} \sum_{a \leq b} \E \left\{ g\left( h_{ab}^2 \bar A_{ ab } \right)^m \right\}
& \leq \frac{ \Gamma\left( m + \frac{1}{2} \right) }{ \sqrt{\pi} } \left\{ 1 + o(1) \right\} , \quad m = 2, 3, \ldots
\\ & < \frac{ \Gamma\left( m + 1 \right) }{ \sqrt{\pi} } , \quad \text{ eventually in $n$},
\\ & \leq \frac{m!}{2} v^2 c^{m-2} , \quad m = 2, 3, \ldots ,
\end{align*}
and thus~\eqref{eq:BernsMomentCondX} holds eventually in $n$. Lemma~8 of \cite{birge1998minimum} then shows that for
\begin{equation*}
Y = \sum_{ a \leq b } g\left( h_{ab}^2 \bar A_{ ab } \right) , \quad \text{with $ z \in \mathcal{Z}_k $ fixed},
\end{equation*}
the following concentration inequality holds for any $\epsilon > 0$:
\begin{align}
\nonumber \Pr \left( Y - \E Y \geq \tbinom{k+1}{2} \epsilon \right) 
& \leq \exp \left( -\frac{ \binom{k+1}{2} \epsilon^2 / 2 }{ v^2 + c \epsilon } \right) 
\\ \label{eq:KLBerns} \Rightarrow \Pr \left( Y \geq \epsilon \right) 
& \leq \exp \left( - \frac{ ( \epsilon - \E Y)^2 / 2 }{ \binom{k+1}{2} v^2 + c ( \epsilon - \E Y ) } \right) .
\end{align}

Observe that since $ \E Y \geq 0$,~\eqref{eq:KLBerns} still holds if we replace $ \E Y $ with an upper bound $u$, because for any $ u \geq \E Y \geq 0 $, the event $ Y - u \geq \epsilon $ implies the event $ Y - \E Y \geq \epsilon $, and so $ \Pr \left( Y - u \geq \epsilon \right) \leq \Pr \left( Y - \E Y \geq \epsilon \right) $. Thus, we may substitute the eventual upper bound $ u = ( 1^+ / 2 ) \binom{k+1}{2} \geq \E Y $ from~\eqref{eq:momentCondX} into~\eqref{eq:KLBerns}, where $ ( 1^+ / 2 ) $ is arbitrarily close to $ 1 / 2 $. Substituting $ ( 1^+ / 2 ) \binom{k+1}{2} $ in place of $ \E Y $ in~\eqref{eq:KLBerns}, along with the constants $ v^2 = 3^+ / 4$ and $ c = 1 $, we see that for any $ \epsilon > 0 $, eventually in $n$,
\begin{equation*}
\Pr\left( Y \geq \epsilon \right) 
\leq \exp \left( - \frac{ \left\{ \epsilon - \frac{ 1^+ }{ 2 } \binom{k+1}{2} \right\}^2 / 2 }{ \binom{k+1}{2} \frac{ 3^+ }{ 4 } + \left\{ \epsilon - \frac{ 1^+ }{ 2 } \binom{k+1}{2} \right\} } \right) .
\end{equation*}
Simplifying this expression and applying a union bound over all $z \in \mathcal{Z}_k$ then yields the stated result.
\end{proof}

\begin{lemma}\label{PB-KL}
Let $X_n$ denote a sequence of Poisson--Binomial variates, each with mean $\mu_n$, and define
\begin{equation}
\label{eq:fXn}
g(X_n) = 
\begin{cases}
\mu_n \log \left( \frac{ \mu_n }{ X_n } \right) + ( n - \mu_n ) \log \left( \frac{ n - \mu_n }{ n - X_n }\right) & \text{if $X_n \in \{1, 2, \ldots , n-1\}$,}
\\ 0 & \text{if $X_n \in \{0, n\}$.}
\end{cases}
\end{equation}
If $ \min \left( \mu_n , n - \mu_n \right) = \omega \bigl( \sqrt{ \mu_n \log \{ \max \left( \mu_n , n - \mu_n \right) \} } \bigr) $, then the moments of $ g(X_n) $ satisfy for $m = 1, 2, \ldots$
\begin{align*}
\E \left\{ g(X_n)^m \right\} \leq \frac{ \Gamma\left( m + \frac{1}{2} \right) }{ \sqrt{\pi} } \left\{ 1 + \mathcal{O} \left( \frac{ \sqrt{ \mu_n \log \{ \max \left( \mu_n , n - \mu_n \right) \} } }{ \min \left( \mu_n , n - \mu_n \right) } \right) \right\} .
\end{align*}
\end{lemma}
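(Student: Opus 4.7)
The target $\Gamma(m+\tfrac12)/\sqrt\pi$ equals $\E[(Z^2/2)^m]$ for $Z\sim N(0,1)$, which suggests showing that $g(X_n)$ behaves like $(X_n-\mu_n)^2/(2\sigma_n^2)$ with $\sigma_n^2=\mu_n(1-\mu_n/n)$ and then exploiting near-Gaussianity of the standardised Poisson--Binomial. My plan is to Taylor-expand $g$, split $\E g(X_n)^m$ into a ``bulk'' event and a ``tail'', and bound each piece separately.

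Writing $Y_n=X_n-\mu_n$ and expanding both logarithms in~\eqref{eq:fXn} yields, whenever $|Y_n|\le\tfrac12\min(\mu_n,n-\mu_n)$,
\[
g(X_n)=\frac{Y_n^{2}}{2\sigma_n^{2}}+R(X_n),\qquad |R(X_n)|\le C\,\frac{|Y_n|^{3}}{\min(\mu_n,n-\mu_n)^{2}},
\]
so that in this regime $g(X_n)=\tfrac{Y_n^{2}}{2\sigma_n^{2}}\{1+O(|Y_n|/\min(\mu_n,n-\mu_n))\}$. Setting $t_n=\kappa\sqrt{\mu_n\log\max(\mu_n,n-\mu_n)}$ for a sufficiently large constant $\kappa$, the lemma hypothesis forces $t_n=o(\min(\mu_n,n-\mu_n))$. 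On $\{|Y_n|\le t_n\}$, raising to the $m$th power and taking expectations then gives
\[
\E\bigl[g(X_n)^m\,\I(|Y_n|\le t_n)\bigr]\;\le\;\frac{\E Y_n^{2m}}{(2\sigma_n^{2})^{m}}\,\bigl\{1+O(mt_n/\min(\mu_n,n-\mu_n))\bigr\}.
\]
A classical Rosenthal- or cumulant-based moment bound for sums of independent Bernoulli variables produces $\E Y_n^{2m}\le(2m-1)!!\,\sigma_n^{2m}\{1+o(1)\}$, and since $(2m-1)!!/2^{m}=\Gamma(m+\tfrac12)/\sqrt\pi$, the bulk contribution is bounded by the target value up to the advertised relative error $O(\sqrt{\mu_n\log\max}/\min)$.

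For the complementary event $\{|Y_n|>t_n\}$, I would bound $g(X_n)\le\mu_n\log\mu_n+(n-\mu_n)\log(n-\mu_n)\le 2\max\log\max$ directly from~\eqref{eq:fXn}, and apply Bennett's inequality to the Poisson--Binomial $X_n$:
\[
\Pr(|Y_n|>t_n)\;\le\;2\exp\bigl(-t_n^{2}/(2\sigma_n^{2}+\tfrac23 t_n)\bigr)\;\le\;2\{\max(\mu_n,n-\mu_n)\}^{-c\kappa^{2}}
\]
for an absolute $c>0$, using $\sigma_n^{2}\le\mu_n$. Thus for $\kappa$ large the tail contribution $\le(2\max\log\max)^{m}\cdot 2\max^{-c\kappa^{2}}$ decays faster than any negative power of $\max$, and in particular is easily absorbed into the claimed remainder.

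The main obstacle will be making the moment bound $\E Y_n^{2m}\le(2m-1)!!\,\sigma_n^{2m}\{1+o(1)\}$ rigorous with a remainder at the rate $\sqrt{\mu_n\log\max}/\min$: this requires the leading Gaussian contribution to be cleanly separated from Edgeworth-type corrections driven by the third and higher cumulants of the Bernoulli summands, and those corrections verified not to dominate the Taylor-remainder term from the bulk step. A secondary care point is that for each fixed $m$ the threshold constant $\kappa$ must be chosen large enough that the tail's polynomial ceiling on $g^m$ is dominated by its super-polynomial probability decay; this is routine because $g^m\le(2\max\log\max)^{m}$ while the tail probability has exponent $-c\kappa^{2}\log\max$.
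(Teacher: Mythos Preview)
Your proposal is correct and follows essentially the same route as the paper: the same bulk/tail split at threshold $t_n\asymp\sqrt{\mu_n\log\max(\mu_n,n-\mu_n)}$, the same Taylor expansion of $g$ in the bulk, and the same Chernoff/Bennett control of the tails against the crude extremal values of $g$. For your flagged ``main obstacle''---obtaining $\E Y_n^{2m}\le(2m-1)!!\,\sigma_n^{2m}\{1+o(1)\}$ with the sharp constant---the paper does not use Rosenthal or cumulant expansions, but instead invokes the convex-order inequality $X_n\le_{\mathrm{cx}}\operatorname{Binomial}(n,\mu_n/n)$ of Shaked and Shanthikumar to reduce to a matched-mean Binomial, whose even central moments $(2m-1)!!(\operatorname{var} Y)^m\{1+O(1/\operatorname{var} Y)\}$ are then read off from Romanovsky's recurrence.
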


\begin{proof}
To simplify notation, we suppress the dependence of $X$ and $\mu$ on $n$ throughout; note, however, that $ m \in \{1, 2, \ldots\} $ is fixed and so does not depend on $n$. Using the fact that $ g(0) = g(n) = 0 $, we write
\begin{align}
\nonumber
\E \left\{ g(X)^m \right\}
& = \sum_{k=0}^{n} g(k)^m \Pr\left( X = k \right) , \quad m = 1, 2, \ldots
\\ \nonumber & = \sum_{k=1}^{n-1} g(k)^m \Pr\left( X = k \right)
\\ \label{eq:KLSum} & = \left( \sum_{k=1}^{k_1} + \sum_{k=k_1+1}^{k_2-1} + \sum_{k=k_2}^{n-1} \right) g(k)^m \Pr\left( X = k \right),
\end{align}
with $k_1, k_2$ chosen to balance the contribution of the central sum in~\eqref{eq:KLSum} with that of the tail sums in~\eqref{eq:KLSum}:$\!$
\begin{subequations}
\label{eq:kSpec}
\begin{align}
k_1 & = \max \left\{ 1 , \left\lfloor \mu - \sqrt{ 2 \mu ( m + \delta ) \log \mu } \right\rfloor \right\} ,
\\ k_2 & = \min \left\{ \left\lceil \mu + \sqrt{ 2 \mu ( m + \delta ) \log ( n - \mu ) } \right\rceil , n-1 \right\}
\end{align}
\end{subequations}
for any fixed $ \delta > 0 $. Since $g(k) \geq 0$ for every value of $k$,~\eqref{eq:KLSum} implies that
\begin{align}
\nonumber
& \E \left\{ g(X)^m \right\}\leq \left\{ \max_{ 1 \leq k \leq k_1 } g(k)^m \right\} \sum_{k=1}^{k_1} \Pr\left( X = k \right) + \hskip-.2cm \sum_{k=k_1+1}^{k_2-1} \hskip-.2cm g(k)^m \Pr\left( X = k \right) \\
& \nonumber\qquad + \left\{ \max_{ k_2 < k < n } g(k)^m \right\} \hskip-.05cm \sum_{k=k_2}^{n-1} \hskip-.15cm \Pr\left( X = k \right)
\\ & \leq \left\{ \max_{ 1 \leq k \leq k_1 } g(k)^m \right\} \Pr\left( X \leq k_1 \right) \hskip.425cm + \hskip-.2cm \sum_{k=k_1+1}^{k_2-1} \hskip-.2cm g(k)^m \Pr\left( X = k \right) \nonumber\\
&\qquad + \left\{ \max_{ k_2 < k < n } g(k)^m \right\} \Pr\left( X \geq k_2 \right) .\label{eq:tailTerm} 
\end{align}

We now bound the two tail terms in~\eqref{eq:tailTerm}. From the definitions of $k_1$ and $k_2$ in~\eqref{eq:kSpec}, our hypothesis $ \min \left( \mu , n - \mu \right) = \omega \bigl( \sqrt{ \mu \log \{ \max \left( \mu , n - \mu \right) \} } \bigr) $ implies that eventually in $n$,
\begin{subequations}
\label{eq:kSpecEv}
\begin{align}
k_1 & = \mu - \epsilon_1 , \quad \epsilon_1 \geq \sqrt{ 2 \mu ( m + \delta ) \log ( \mu ) } ,
\\ k_2 & = \mu + \epsilon_2 , \quad \epsilon_2 \geq \sqrt{ 2 \mu ( m + \delta ) \log ( n - \mu ) } .
\end{align}
\end{subequations}

Now recall the standard Chernoff bounds for Poisson--Binomial variates, which hold for any $ \epsilon > 0$:
\begin{align*}
\Pr \left( X \leq \mu - \epsilon \right)
& \leq \exp\left( - \frac{ \epsilon^2 }{ 2 \mu } \right) ,
\\ \Pr \left( X \geq \mu + \epsilon \right)
& \leq \exp\left\{ - \frac{ \epsilon^2 }{ 2 \mu } \, \left( 1 + \frac{ \epsilon }{ 3 \mu } \right)^{-1} \right\} .
\end{align*}
Applying these bounds to $X \leq \mu - \epsilon_1$ and $X \geq \mu + \epsilon_2$, respectively, we conclude that eventually in $n$,
\begin{subequations}
\label{eq:Chernoff}
\begin{align}
\Pr \left( X \leq k_1 \right)
& \leq \mu^{ - ( m + \delta ) } ,
\\ \nonumber \Pr \left( X \geq k_2 \right)
& \leq \textstyle \exp\left\{ - ( m + \delta ) \log ( n - \mu ) \left( 1 + \frac{ \sqrt{ 2 ( m + \delta ) } }{ 3 } \sqrt{ \frac{ \log ( n - \mu ) }{ \mu } } \right)^{-1} \right\} 
\\ & \textstyle = ( n - \mu )^{ - ( m + \delta ) } \left\{ 1 + \mathcal{O} \left( \sqrt{ \frac{ \log ( n - \mu ) }{ \mu } } \right) \right\} ,
\end{align}
\end{subequations}
with the hypothesis $ \min \left( \mu , n - \mu \right) = \omega \bigl( \sqrt{ \mu \log \{ \max \left( \mu , n - \mu \right) \} } \bigr) $ implying that $ \mu = \omega \big( \log ( n - \mu ) \big) $.

This hypothesis also implies that $1 < \mu < n - 1$ eventually in $n$. Since $g(k)$ is strictly decreasing on $1 \leq k < \mu$ and strictly increasing on $\mu < k \leq n-1$, we have for $m = 1, 2, \ldots$ that $\max_{ 1 \leq k \leq k_1 } g(k)^m = g(1)^m \leq \left( \mu \log \mu \right)^m $ and $ \max_{ k_2 < k < n } g(k)^m = g(n-1)^m \leq \left\{ ( n - \mu ) \log ( n - \mu ) \right\}^m$ eventually in $n$.

Combining these two upper bounds with~\eqref{eq:tailTerm} and~\eqref{eq:Chernoff}, we conclude that, eventually in $n$,
\begin{multline}
\label{eq:tailBounds}
\E \left\{ g(X)^m \right\}
\leq \log ( \mu )^m \mu^{ - \delta } + \sum_{k=k_1+1}^{k_2-1} g(k)^m \Pr\left( X = k \right) 
\\ + \log ( n - \mu )^m ( n - \mu )^{ - \delta } \left\{ 1 + \mathcal{O} \left( \sqrt{ \frac{ \log ( n - \mu ) }{ \mu } } \right) \right\} .
\end{multline}
As a final step, we bound $\sum_{k=k_1+1}^{k_2-1} g(k)^m \Pr\left( X = k \right)$ in~\eqref{eq:tailBounds}. Recognizing $g(k)$ from~\eqref{eq:fXn} as a scaled form of a Bernoulli Kullback--Leibler divergence, we have by the Taylor expansion of Lemma~\ref{KLDiv} that 
\begin{multline}
\label{eq:censKLTaylor}
g(k) \leq \frac{ n(k-\mu)^2 }{ 2\mu(n-\mu) } \\ \cdot \left\{ 1 + \tfrac{2}{3} \tfrac{ \left| k - \mu \right| }{ \min( \mu, n - \mu ) } \bigl( 1 - \tfrac{ |k-\mu| }{ \min ( \mu, n - \mu ) } \bigr)^{-3} \right\} , \quad | k - \mu | < \min( \mu , n - \mu ) .
\end{multline}

Now,~\eqref{eq:kSpecEv} implies that for all $n$ sufficiently large, \\
$ | k - \mu | \leq \sqrt{ 2 \mu ( m + \delta ) \log \{ \max( \mu, n - \mu ) \} } + 1 $ whenever $k \in \{k_1, \ldots , k_2\}$, and so
\begin{align}
\nonumber
\frac{ | k - \mu | }{ \min \left( \mu , n - \mu \right) } 
& \leq \sqrt{ 2 ( m + \delta ) } \left[ \frac{ \sqrt{ \mu \log \{ \max( \mu, n - \mu ) \} } }{ \min \left( \mu , n - \mu \right) } \right] \\
&\nonumber \cdot\left[ 1 + \frac{ 1 }{ \sqrt{ 2 \mu ( m + \delta ) \log \{ \max( \mu, n - \mu ) \} } } \right]
\\ \label{eq:kTerm} & = \mathcal{O} \left( \frac{ \sqrt{ \mu \log \{ \max( \mu, n - \mu ) \} } }{ \min( \mu , n - \mu ) } \right) , \quad k_1 \leq k \leq k_2 ,
\end{align}
since the hypothesis $ \min \left( \mu , n - \mu \right) = \omega \bigl( \sqrt{ \mu \log \{ \max \left( \mu , n - \mu \right) \} } \bigr) $ implies that $ \mu = \omega( \log n ) $. From~\eqref{eq:kTerm}, we see that this hypothesis also implies that the Lagrange remainder term in~\eqref{eq:censKLTaylor} is $o(1)$.

Therefore, we may use the Taylor expansion of~\eqref{eq:censKLTaylor} to obtain the upper bound
\begin{align}
\nonumber
\sum_{k=k_1+1}^{k_2-1} & g(k)^m \Pr\left( X = k \right)
\\ \nonumber & \leq \sum_{k=k_1+1}^{k_2-1} \left\{ \frac{ n (k-\mu)^2 }{ 2\mu(n-\mu) } \right\}^m \left\{ 1 + \mathcal{O} \left( \frac{ | k - \mu | }{ \min( \mu , n - \mu ) } \right) \right\}^m \Pr\left( X = k \right) 
\\ \label{eq:middleSum} & = \left\{ \frac{ n }{ 2\mu(n-\mu) } \right\}^m \left\{ 1 + \mathcal{O} \left( \frac{ \sqrt{ \mu \log \{ \max( \mu, n - \mu ) \} } }{ \min( \mu , n - \mu ) } \right) \right\} \\
\nonumber &\cdot \sum_{k=k_1+1}^{k_2-1} (k-\mu)^{2m} \Pr\left( X = k \right) .
\end{align}
Noting that each term appearing in the sum of~\eqref{eq:middleSum} is nonnegative, we see that
\begin{align*}
\sum_{k=k_1+1}^{k_2-1} (k-\mu)^{2m} \Pr\left( X = k \right)
& \leq \left( \sum_{k=0}^{k_1} + \sum_{k=k_1+1}^{k_2-1} + \sum_{k=k_2}^n \right) (k-\mu)^{2m} \Pr\left( X = k \right)
\\ & = \E \left\{ (X-\mu)^{2m} \right\}, \quad m = 1, 2, \ldots,
\end{align*}
with each $\E \left\{ (X-\mu)^{2m} \right\}$ an even-order central moment of the Poisson--Binomial random variable $X$. 

\citet[Theorem~3.A.37]{shaked1994stochastic} show that \\ $Y \sim \operatorname{Binomial}(n,\mu/n)$ is larger than $X$ in the convex order, meaning that $\E \phi(X) \leq \E \phi(Y)$ holds for all convex functions $\phi: \mathbb{R} \rightarrow \mathbb{R}$ for which the expectations exist. Since the even-order central moments $\E (Y-\mu)^{2m}$ exist and are convex for all $m = 1, 2, \ldots$, it follows that
\begin{equation*}
\E \left\{ (X-\mu)^{2m} \right\} \leq \E \left\{ (Y-\mu)^{2m} \right\} , \quad m = 1, 2, \ldots ,
\end{equation*}
where $X$ is the Poisson--Binomial variate under study and the random variable $Y \sim \operatorname{Binomial}(n,\mu/n)$ has a matched mean.

As observed by \citet{romanovsky1923note}, the central moments of the Binomial distribution admit a recurrence relation that allows each of their leading-order terms to be expressed in closed form:
\begin{equation*}
\E \left\{ (Y-\mu)^{2m} \right\} = (2m-1)!! \left( \var Y \right)^m \left\{ 1 + \mathcal{O}\left( \frac{ 1 }{ \var Y } \right) \right\} ,
\end{equation*}
with\vspace{-1.5\baselineskip}%
\begin{align*}
\hskip0.4cm \var Y & = \frac{ \mu ( n - \mu ) }{ n } 
\\ & = \left\{ \frac{ \max \left( \mu , n - \mu \right) }{ n } \right\} \min \left( \mu , n - \mu \right)
\\ & = \Theta \bigl( \min \left( \mu , n - \mu \right) \bigr) .
\end{align*}
Thus we have from~\eqref{eq:middleSum} that
\begin{align}
\nonumber
\sum_{k=k_1+1}^{k_2-1} g(k)^m \Pr\left( X = k \right) 
& \leq \left\{ \frac{ n }{ 2\mu(n-\mu) } \right\}^m \left\{ 1 + \mathcal{O} \left( \frac{ \sqrt{ \mu \log \{ \max( \mu, n - \mu ) \} } }{ \min( \mu , n - \mu ) } \right) \right\} 
\\ \nonumber & \hskip-.25cm \cdot \left[ (2m-1)!! \left\{ \frac{ \mu(n-\mu) }{ n } \right\}^m \left\{ 1 + \mathcal{O}\left( \frac{ 1 }{ \min \left( \mu , n - \mu \right) } \right) \right\} \right]
\\ \label{eq:centreBound} & = \frac{ (2m-1)!! }{ 2^m } \left\{ 1 + \mathcal{O} \left( \frac{ \sqrt{ \mu \log \{ \max( \mu, n - \mu ) \} } }{ \min( \mu , n - \mu ) } \right) \right\} ,
\end{align} 
where the combination of the $ \mathcal{O}( \cdot ) $ terms follows because $ \mu = \omega( \log n ) $ is implied by the hypothesis that $ \min \left( \mu , n - \mu \right) = \omega \bigl( \sqrt{ \mu \log \{ \max \left( \mu , n - \mu \right) \} } \bigr) $. Finally, combining~\eqref{eq:tailBounds} with~\eqref{eq:centreBound}, and noting that $ (2m-1)!! / 2^m = \Gamma(m + 1/2) / \sqrt{\pi}$, we obtain for any choice of $ \delta > 0 $ and every fixed $ m = 1, 2, \ldots $ that
\begin{multline*}
\E \left\{ g(X)^m \right\}
\leq \log ( \mu )^m \mu^{ - \delta } + \frac{ \Gamma(m + 1/2) }{ \sqrt{\pi} } \left\{ 1 + \mathcal{O} \left( \frac{ \sqrt{ \mu \log \{ \max( \mu, n - \mu ) \} } }{ \min( \mu , n - \mu ) } \right) \right\} 
\\ + \log ( n - \mu )^m ( n - \mu )^{ - \delta } \left\{ 1 + \mathcal{O} \left( \sqrt{ \frac{ \log ( n - \mu ) }{ \mu } } \right) \right\} ,
\end{multline*}
eventually in $n$. To complete the proof, observe that $ \delta > 0 $ can be chosen for each $m$ such that the terms $ \log ( \mu )^m \mu^{ - \delta } $ and $ \log ( n - \mu )^m ( n - \mu )^{ - \delta } $ tend to $0$ arbitrarily quickly in $n$, thus yielding the theorem.
\end{proof}

\newcommand{\Graphontheorem}{\ref{absolute}}
\section{Proof of Theorem~\protect\Graphontheorem{} and lemmas}\label{sec:sbs-pf}

\subsection{Proof of Theorem~\protect\Graphontheorem{}}

\begin{proof}
Recall that our aim is to establish~\eqref{eq:min-risk-Holder}, which asserts that $ \min_{ z \in \mathcal{Z}_k } \sum_{ i<j } \D\left( p_{ij} \,\middle\vert\middle\vert\, \bar p_{ z_i z_j } \right) = {\cal O}_P \bigl( \left\{ n^{-\alpha} + ( n / h_\vee )^{-2\alpha} \right\} \cdot \sum_{ i<j } p_{ij} \bigr) $. We will do so by upper-bounding this risk in terms of a random community assignment vector $ \tilde z^* $ that depends on the ordered sample $\{ \xi_{(i)} \}_{i=1}^n $ of $\operatorname{Uniform}(0,1)$ variates that index the graphon $f$. Convergence of this ordered sample to the lattice $ (n+1)^{-1} ( 1, \ldots, n ) $, coupled with the uniform continuity of $f$, as enforced by a H\"older assumption, will yield the result.

We proceed as follows. Let $ z^* $ be any minimizer of $ \sum_{ i<j } \D\left( p_{ij} \,\middle\vert\middle\vert\, \bar p_{ z_i z_j } \right) $ over the set $ \mathcal{Z}_k $ of admissible blockmodel assignment vectors, and define $ \tilde z^*_i = H_{k,z^*}^{-1} \, \{ (i)^{-1} / n \} $, with $(i)^{-1}$ the rank of $\xi_i$ from smallest to largest. Thus $ \tilde z^* = H_{k,z^*}^{-1} \circ (\cdot)^{-1} $, and therefore by construction, condition~\ref{cond:oracle-perm} of the theorem ensures that $ \tilde z^* \in \mathcal{Z}_k $ for any $ z^* \in \mathcal{Z}_k $. Hence we have the following upper bound:
\begin{equation*}
\min_{ z \in \mathcal{Z}_k } \sum_{ i<j } \D\left( p_{ij} \,\middle\vert\middle\vert\, \bar p_{ z_i z_j } \right)
\leq \sum_{ i<j } \D\left( p_{ij} \,\middle\vert\middle\vert\, \bar p_{ {\tilde z}^*_i {\tilde z}^*_j } \right)
= \sum_{ i<j } \D\left( p_{(i)(j)} \,\middle\vert\middle\vert\, \bar p_{ {\tilde z}^*_{(i)} {\tilde z}^*_{(j)} } \right) , 
\end{equation*}
with equality stemming from the fact that the sum over all $ i < j $ is invariant to permutation, and hence we may re-order it in accordance with the ordered sample $\{ \xi_{(i)} \}_{i=1}^n $.

Conditions~\ref{cond:Holder} and~\ref{cond:rho} of the theorem then imply that Lemma~\ref{graphon-oracle} holds, thereby completing the proof. 
\end{proof}

\subsection{Auxiliary lemmas needed for Theorem~\protect\Graphontheorem{}}

\begin{lemma}\label{graphon-oracle}
$\!$If $r_n \rightarrow 0 $ in Lemma~\ref{taylor-div}, then
\begin{equation*}
\frac{ \sum_{i<j} \!\D\!\left( p_{(i)(j)} \middle\vert\middle\vert\bar p_{ {\tilde z}_{(i)} {\tilde z}_{(i)} } \right) }{ \sum_{i<j} \rho_n f\!\left( \xi_i , \xi_j \right)} = {\cal O}_P\bigl( r_n^2 \bigr).
\end{equation*}
\end{lemma}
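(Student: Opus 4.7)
The plan is to reduce the numerator to a weighted sum of squared discrepancies via the Taylor expansion of Lemma~\ref{taylor-div}, control each discrepancy uniformly by combining H\"older continuity of $f$ with Lemma~\ref{lem:pbar-ftilde}, and observe that the denominator concentrates at $\binom{n}{2}\rho_n$.

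First, I would apply Lemma~\ref{taylor-div} summand-wise to write
\[
\D\!\left(p_{(i)(j)}\,\middle\vert\middle\vert\,\bar p_{\tilde z_{(i)}\tilde z_{(j)}}\right) = \frac{\bigl(\rho_n f(\xi_{(i)},\xi_{(j)}) - \bar p_{\tilde z_{(i)}\tilde z_{(j)}}\bigr)^2}{2\,\bar p_{\tilde z_{(i)}\tilde z_{(j)}}\bigl(1-\bar p_{\tilde z_{(i)}\tilde z_{(j)}}\bigr)}\bigl\{1+O_P(r_n)\bigr\},
\]
with the Lagrange remainder being uniformly $o_P(1)$ precisely because $r_n\to 0$. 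Conditions~\ref{cond:Holder} and~\ref{cond:rho} of Theorem~\ref{absolute} (namely $f$ bounded away from $0$ and $\max_n \rho_n f$ bounded away from $1$) keep the denominator of this leading term at order $\rho_n$ uniformly, so each summand is bounded above by $C\,\rho_n^{-1}\bigl(\rho_n f(\xi_{(i)},\xi_{(j)})-\bar p_{\tilde z_{(i)}\tilde z_{(j)}}\bigr)^2\{1+o_P(1)\}$.

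Next, I would bound the squared discrepancy by combining two ingredients. Lemma~\ref{lem:pbar-ftilde} gives
\(
\rho_n^{-1}\bar p_{\tilde z_{(i)}\tilde z_{(j)}}-\bar f(\xi_{(i)},\xi_{(j)};h) = O_P\bigl(n^{-\alpha/2}+(n/h_\vee)^{-\alpha}\bigr)
\)
uniformly in $(i,j)$, while the $\alpha$-H\"older hypothesis and the definition~\eqref{omegaxy} of the block average imply
\(
|f(\xi_{(i)},\xi_{(j)})-\bar f(\xi_{(i)},\xi_{(j)};h)|=O\bigl((h_\vee/n)^{\alpha}\bigr)
\)
via the block diameter. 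Together these yield $|f(\xi_{(i)},\xi_{(j)})-\rho_n^{-1}\bar p_{\tilde z_{(i)}\tilde z_{(j)}}|=O_P(r_n)$ uniformly in $(i,j)$, so each summand of the numerator is $O_P(\rho_n r_n^2)$, whence $\sum_{i<j}\D(\cdots)=O_P\bigl(\binom{n}{2}\rho_n r_n^2\bigr)$.

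For the denominator, a direct second-moment computation in the style of Lemma~\ref{rhohat} — exploiting $\iint f\,dx\,dy=1$ and boundedness of $f^{2}$ — gives $\sum_{i<j}\rho_n f(\xi_i,\xi_j)=\binom{n}{2}\rho_n\bigl\{1+O_P(n^{-1/2})\bigr\}$. Taking the ratio and invoking Slutsky's theorem produces the stated $O_P(r_n^2)$. The main obstacle is the uniformity of the Taylor remainder in Lemma~\ref{taylor-div}: the expansion is applied at $\binom{n}{2}$ points simultaneously, so the $(1+O_P(r_n))$ factor must hold with a single controlling constant valid across all blocks. This uniformity is exactly what the hypothesis $r_n\to 0$ and the global boundedness conditions on $f$ and $\rho_n f$ secure; without them, a single saturated block average $\bar p_{\tilde z_{(i)}\tilde z_{(j)}}$ close to $\{0,1\}$ would cause the Taylor coefficient $\bigl[2\bar p(1-\bar p)\bigr]^{-1}$ to blow up and invalidate the bound.
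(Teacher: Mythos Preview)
Your overall architecture---bound the numerator by $\binom{n}{2}\rho_n r_n^2$, show the denominator concentrates at $\binom{n}{2}\rho_n$ via a second-moment argument, then apply Slutsky---matches the paper exactly (the paper packages these two steps as Lemmas~\ref{comby} and~\ref{sump}). However, your treatment of the numerator differs from the paper's in a substantive way, and you have misread Lemma~\ref{taylor-div}.

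Lemma~\ref{taylor-div} does \emph{not} produce the quadratic approximation you write. Its conclusion is
\[
\rho_n^{-1}\D\bigl(p_{(i)(j)}\,\big\|\,\bar p_{\tilde z_{(i)}\tilde z_{(j)}}\bigr)
=\rho_n^{-1}\D\bigl\{p_{(i)(j)}\,\big\|\,\rho_n\bar f(\xi_{(i)},\xi_{(j)})\bigr\}+\mathcal{O}_P(r_n^2),
\]
i.e.\ it perturbs the \emph{second} argument from the random block mean $\bar p_{\tilde z}$ to the deterministic step average $\rho_n\bar f$, at cost $\mathcal{O}_P(r_n^2)$. The paper then reorders the sum and bounds the residual $\sum_{i<j}\D\{\rho_n f(\xi_i,\xi_j)\,\|\,\rho_n\bar f(\xi_i,\xi_j)\}$ in expectation via Lemma~\ref{divergenceMC2}, finishing by Markov's inequality. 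What you actually invoke is the direct second-order expansion of $\D(p+\delta\,\|\,p)$ from Lemma~\ref{KLDiv}, with $p=\bar p_{\tilde z}$ and $\delta=p_{(i)(j)}-\bar p_{\tilde z}$; your displayed formula and its $(1+\mathcal{O}_P(r_n))$ remainder are correct for that lemma, not for Lemma~\ref{taylor-div}.

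Modulo this misattribution, your one-step route is valid and arguably cleaner than the paper's two-step decomposition. The trade-off: the paper's route isolates the randomness of $\xi$ into the difference $\bar p_{\tilde z}-\rho_n\bar f$ (handled term-wise in expectation inside Lemma~\ref{taylor-div}) and then treats the purely deterministic-in-$z$ divergence $\D(\rho_n f\,\|\,\rho_n\bar f)$ by a clean $L_2$ bound. Your route requires controlling $\max_{i,j}|f(\xi_{(i)},\xi_{(j)})-\rho_n^{-1}\bar p_{\tilde z_{(i)}\tilde z_{(j)}}|$ simultaneously across all $\binom{n}{2}$ pairs before summing; Lemma~\ref{lem:pbar-ftilde} only gives per-pair expectation bounds, so to make your ``uniformly in $(i,j)$'' claim rigorous you need either a Dvoretzky--Kiefer--Wolfowitz bound on $\max_i|\xi_{(i)}-i/(n{+}1)|$ (which does give the same $n^{-1/2}$ rate) or to revert to an expectation-then-Markov argument as the paper does. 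The boundedness hypotheses you cite control the Taylor \emph{coefficients} uniformly, but not the random increments $\delta$ themselves.
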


\begin{proof}
This follows from via Slutsky's theorem, after combining the results of Lemmas~\ref{sump} and~\ref{comby}:
\begin{align*}
\textstyle \tbinom{n}{2}^{-1} \sum_{i<j} f\left( \xi_i , \xi_j \right) 
& = \textstyle \iint_{(0,1)^2} f\left(x,y\right)\,dx\,dy+{\cal O}_P\bigl( n^{-1/2} \bigr) ,
\\ \textstyle \left\{ \rho_n \tbinom{n}{2} \right\}^{-1} \sum_{i<j} \D\left( p_{(i)(j)} \,\middle\vert\middle\vert\, \bar p_{ {\tilde z}_{(i)} {\tilde z}_{(i)} } \right) 
& =\textstyle {\cal O}_P\left( r_n^2 \right) .
\end{align*} 
Since the denominator term converges in probability to a constant, it also converges in law. Thus by Slutsky's theorem, the ratio converges in law to a constant, and hence it also converges in probability.
\end{proof}

\begin{lemma}\label{sump}
Let $f$ be a symmetric measurable function on $(0,1)^2$ with bounded magnitude, and let $\{\xi_{i} \}_{i=1}^n$ be a random sample of $\operatorname{Uniform}(0,1)$ variates. Then
\begin{equation*}
\textstyle \tbinom{n}{2}^{-1} \sum_{i<j} f\left( \xi_i , \xi_j \right) = \iint_{(0,1)^2} f\left(x,y\right)\,dx\,dy+{\cal O}_P\bigl( n^{-1/2} \bigr).
\end{equation*}
\end{lemma}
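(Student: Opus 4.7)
The plan is to recognize $\tbinom{n}{2}^{-1}\sum_{i<j} f(\xi_i,\xi_j)$ as a U-statistic $U_n$ of order~$2$ with symmetric, bounded kernel $f$, and to establish the claimed rate via a direct computation of its mean and variance, followed by Chebyshev's inequality. No appeal to the general Hoeffding decomposition is required, since boundedness already yields a uniform bound on the relevant covariances.

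First I would handle the bias. By independence of $\xi_i$ and $\xi_j$ for $i\neq j$ together with the uniform marginals, $\E f(\xi_i,\xi_j) = \iint_{(0,1)^2} f(x,y)\,dx\,dy$, so $\E U_n$ coincides exactly with the claimed limit. Thus it remains only to control $U_n - \E U_n$ in probability.

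Next I would bound the variance by expanding
\begin{equation*}
\var U_n = \tbinom{n}{2}^{-2} \sum_{i<j}\sum_{k<l} \cov\bigl(f(\xi_i,\xi_j),\, f(\xi_k,\xi_l)\bigr),
\end{equation*}
and split the sum according to the overlap $|\{i,j\}\cap\{k,l\}| \in \{0,1,2\}$. Pairs with zero overlap contribute nothing by independence. Pairs with overlap two number $O(n^2)$, while pairs sharing exactly one index number $O(n^3)$; since $f$ is bounded, each nonzero covariance has magnitude at most $\|f\|_\infty^2<\infty$. Combining with the $\tbinom{n}{2}^{-2} = O(n^{-4})$ prefactor gives $\var U_n = O(n^{-1})$, with the rate driven by the overlap-one contribution.

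Finally, Chebyshev's inequality (equivalently, Markov's inequality applied to $(U_n-\E U_n)^2$) converts this $O(n^{-1})$ variance bound into $U_n - \E U_n = \mathcal{O}_P(n^{-1/2})$, which is exactly the stated conclusion. There is no genuine obstacle here; the argument is essentially standard U-statistics bookkeeping, and the only place to be careful is the accurate count of overlap-one pairs, since these dominate and determine the $n^{-1/2}$ rate. Boundedness of $f$ is convenient but not essential, as a second-moment hypothesis on $f$ would suffice for the same conclusion.
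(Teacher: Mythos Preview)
Your proposal is correct and follows essentially the same approach as the paper: compute the mean directly, expand the variance as a double sum of covariances, use independence to discard non-overlapping index pairs, bound the surviving $O(n^3)$ covariances via boundedness of $f$ to obtain $\var U_n = O(n^{-1})$, and conclude by Chebyshev. The paper does not explicitly invoke the U-statistic terminology, but the argument is identical.
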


\begin{proof}
The result follows from Chebyshev's inequality. We obtain the necessary moments as
\begin{align}
\nonumber
\textstyle\E \tbinom{n}{2}^{-1} \sum_{i<j} f\left( \xi_i , \xi_j \right) 
& = \textstyle\iint_{(0,1)^2} f\left(x,y\right)\,dx\, dy,\\
\textstyle\var \tbinom{n}{2}^{-1} \sum_{i<j} f\left( \xi_i , \xi_j \right)
& = \textstyle\tbinom{n}{2}^{-2} \sum_{i<j} \sum_{k<l} \cov\left\{f\left(\xi_{i},\xi_{j} \right),f\left(\xi_{k},\xi_{l} \right)\right\}.
\label{covarsum}
\end{align}
Since $\left|f\left(x,y\right)\right|$ is bounded by hypothesis, $\left|\cov\left\{f\left(\xi_{i},\xi_{j} \right),f\left(\xi_{k},\xi_{l} \right)\right\}\right|$ is also bounded. 
Furthermore, since elements of $\{\xi_{1},\dots, \xi_{n}\}$ are independent, any individual covariance term appearing in the sum of~\eqref{covarsum} can be nonzero only if $ \left(i=k\right) \cup \left(i=l\right) \cup \left(j=k\right) \cup \left(j=l\right)$. Thus we conclude that
\begin{multline*}
\textstyle\var \tbinom{n}{2}^{-1} \sum_{i<j} f\left( \xi_i , \xi_j \right)
\\ = \textstyle {\cal O}\Bigl(\tbinom{n}{2}^{-2} \sum_{i<j} \sum_{k<l} \left\{ \I\left(i=k\right) + \I\left(i=l\right) + \I\left(j=k\right) + \I\left(j=l\right) \right\} \Bigr).
\end{multline*}
The right-hand side of this expression is ${\cal O}\bigl( n^{-1} \bigr)$, and so Chebyshev's inequality yields the result.
\end{proof}

\begin{lemma}\label{comby}
Whenever $r_n \rightarrow 0 $ in~\eqref{eq:rn-defn} from Lemma~\ref{taylor-div}, we have that 
\begin{equation*}
\textstyle \left\{ \rho_n \tbinom{n}{2} \right\}^{-1} \sum_{i<j} \D\left( p_{(i)(j)} \,\middle\vert\middle\vert\, \bar p_{ {\tilde z}_{(i)} {\tilde z}_{(i)} } \right) = {\cal O}_P\left( r_n^2 \right) .
\end{equation*}
\end{lemma}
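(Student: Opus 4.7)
The plan is to apply Lemma~\ref{taylor-div} termwise to expand each summand $\rho_n^{-1}\D\left(p_{(i)(j)} \,\middle\vert\middle\vert\, \bar p_{{\tilde z}_{(i)}{\tilde z}_{(j)}}\right)$ as a scaled squared deviation $\bigl(p_{(i)(j)} - \bar p_{{\tilde z}_{(i)}{\tilde z}_{(j)}}\bigr)^2$ divided by an effective variance of order $\rho_n$, with the remainder controlled uniformly by a factor $1 + \mathcal{O}(r_n)$. Since $f$ is bounded away from zero by hypothesis (Condition~\ref{cond:rho}) and $\max_n \rho_n f$ is bounded away from unity, the denominator $\bar p_{ab}(1-\bar p_{ab})$ is $\Theta_P(\rho_n)$ uniformly over blocks, so each summand reduces at leading order to $\rho_n^{-1} \bigl(p_{(i)(j)} - \bar p_{{\tilde z}_{(i)}{\tilde z}_{(j)}}\bigr)^2 / \rho_n = \rho_n^{-1}\bigl(f(\xi_{(i)},\xi_{(j)}) - \rho_n^{-1}\bar p_{{\tilde z}_{(i)}{\tilde z}_{(j)}}\bigr)^2 \cdot \Theta(1)$.

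Next, I would invoke the oracle identification already established in the excerpt: since $\tilde z = H^{-1}\circ(\cdot)^{-1}$, the block mean $\rho_n^{-1}\bar p_{{\tilde z}_{(i)}{\tilde z}_{(j)}}$ is precisely the block average $\bar f(\xi_{(i)},\xi_{(j)};h)$ of $f$ over the cell of the partition containing the rank-$(i,j)$ pair, up to the replacement of integration against Lebesgue measure by an average over the ordered sample. Using the H\"older bound from Lemma~\ref{ftilde} together with Lemma~\ref{lem:pbar-ftilde} cited just before Theorem~\ref{absolute}, we obtain the pointwise control
\begin{equation*}
\rho_n^{-1}\bar p_{{\tilde z}_{(i)}{\tilde z}_{(j)}} - f(\xi_{(i)},\xi_{(j)}) = \mathcal{O}_P\bigl(n^{-\alpha/2} + (h_\vee/n)^\alpha\bigr),
\end{equation*}
uniformly in $(i,j)$. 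This is exactly the quantity $r_n$ appearing as the hypothesis of the lemma, so each summand is bounded by $\Theta(1)\cdot r_n^2 \cdot \rho_n$ uniformly.

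Summing over the $\binom{n}{2}$ pairs and dividing by $\rho_n\binom{n}{2}$ yields $\mathcal{O}_P(r_n^2)$ for the normalised risk, provided the error factor $1 + \mathcal{O}(r_n)$ from the Taylor remainder can be brought outside the sum, which is legitimate because $r_n \to 0$ by hypothesis and the uniformity over $(i,j)$ allows a simultaneous bound. I would finish by applying Markov's inequality to the uniform envelope to convert the $\mathcal{O}_P$ bound into the stated order.

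The main obstacle will be the uniformity: to make the termwise Taylor expansion legitimate across all $\binom{n}{2}$ pairs, I need to confirm that the denominator $\bar p_{ab}(1-\bar p_{ab})$ is uniformly $\Theta_P(\rho_n)$ over \emph{all} admissible blocks and that the H\"older-based envelope for $|p_{(i)(j)}-\bar p_{{\tilde z}_{(i)}{\tilde z}_{(j)}}|$ holds with an exceptional set of negligible probability. The condition that $f$ is bounded away from zero and $\max_n \rho_n f$ bounded away from unity, combined with the uniform convergence of ordered uniforms to the equispaced lattice $i/(n+1)$, is what will make this uniform control go through; once that is secured the rest is bookkeeping via Slutsky and Markov, exactly as in Lemma~\ref{graphon-oracle}.
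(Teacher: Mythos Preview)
Your overall strategy is sound and actually a bit more direct than the paper's, but you have misread what Lemma~\ref{taylor-div} delivers. It does \emph{not} give a squared-deviation expansion of $\D(p_{(i)(j)}\,\|\,\bar p_{\tilde z_{(i)}\tilde z_{(j)}})$; rather, it replaces the second argument $\bar p_{\tilde z_{(i)}\tilde z_{(j)}}$ by the deterministic block mean $\rho_n\bar f(\xi_{(i)},\xi_{(j)})$, at cost $\mathcal{O}_P(r_n^2)$. The Taylor expansion you actually want---$\D(p\,\|\,q)\approx (p-q)^2/\{2q(1-q)\}$ with a Lagrange remainder controlled by $|\delta|/\min(q,1-q)$---is Lemma~\ref{KLDiv}. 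Once you cite the right lemma, your one-step argument goes through: bound $|p_{(i)(j)}-\bar p_{\tilde z_{(i)}\tilde z_{(j)}}|\le \rho_n|f-\bar f|+\rho_n|\bar f-\rho_n^{-1}\bar p|$ by Lemmas~\ref{normbound} and~\ref{lem:pbar-ftilde}, note the denominator $\bar p(1-\bar p)=\Theta(\rho_n)$ uniformly over blocks under Condition~\ref{cond:rho}, and conclude each summand is $\mathcal{O}_P(r_n^2)$. (Incidentally, the display $\rho_n^{-1}(p-\bar p)^2/\rho_n=\rho_n^{-1}(f-\rho_n^{-1}\bar p)^2\cdot\Theta(1)$ has a stray $\rho_n^{-1}$ on the right; the correct leading order is just $(f-\rho_n^{-1}\bar p)^2\cdot\Theta(1)$.)

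By contrast, the paper proceeds in two steps: first Lemma~\ref{taylor-div} swaps $\bar p\to\rho_n\bar f$ in the second slot, reducing to $\{\rho_n\binom{n}{2}\}^{-1}\sum_{i<j}\D\{\rho_n f(\xi_i,\xi_j)\,\|\,\rho_n\bar f(\xi_i,\xi_j)\}$ after reordering; then Lemma~\ref{divergenceMC2} bounds this expectation by rewriting it as the double integral $\iint\rho_n^{-1}\D(\rho_n f\,\|\,\rho_n\bar f)$ and applying the H\"older/block-approximation bounds, followed by Markov's inequality. The advantage of the paper's route is that the second step is purely deterministic (an expectation over i.i.d.\ uniforms), so no uniformity argument over ordered pairs is needed; your route is shorter but requires the uniform envelope you describe, which the deterministic numerator bounds in Lemmas~\ref{normbound} and~\ref{lem:pbar-ftilde} do supply.
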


\begin{proof}
The result follows by combining Lemmas~\ref{taylor-div} and~\ref{divergenceMC2}. From Lemma~\ref{taylor-div}, we have directly that
\begin{equation*}
\rho_n^{-1} \D\left( p_{(i)(j)} \,\middle\vert\middle\vert\, \bar p_{ {\tilde z}_{(i)} {\tilde z}_{(i)} } \right)
= \rho_n^{-1} \D\left\{ p_{(i)(j)} \,\middle\vert\middle\vert\, \rho_n \bar f\left(\xi_{(i)},\xi_{(j)} \right) \right\} + {\cal O}_P\left( r_n^2 \right) 
\end{equation*}
under the hypothesis that $r_n \rightarrow 0$, and thus
\begin{align*}
& \textstyle \left\{ \rho_n \tbinom{n}{2} \right\}^{-1} \sum_{i<j} \D\left( p_{(i)(j)} \,\middle\vert\middle\vert\, \bar p_{ {\tilde z}_{(i)} {\tilde z}_{(i)} } \right)
\\ & \quad  = \textstyle\left\{ \rho_n \tbinom{n}{2} \right\}^{-1} \sum_{i<j} \D\left\{ p_{(i)(j)} \,\middle\vert\middle\vert\, \rho_n \bar f\left(\xi_{(i)},\xi_{(j)} \right) \right\} + {\cal O}_P\left( r_n^2 \right) 
\\ & \quad  =\textstyle \left\{ \rho_n \tbinom{n}{2} \right\}^{-1} \sum_{i<j} \D\left\{\rho_n f\left(\xi_{i}, \xi_{j}\right) \,\middle\vert\middle\vert\, \rho_n {\bar f}\left(\xi_{i}, \xi_{j}\right) \right\} \!+\! {\cal O}_P\left( r_n^2 \right)\! ,
\end{align*}
after re-ordering the sum and applying the identity $ p_{ij} = \rho_n f\left(\xi_{i}, \xi_{j}\right)$. The right-hand side of this expression is treated by Lemma~\ref{divergenceMC2}, which shows whenever $ \max_{1 \leq a, b \leq k} \Delta_{ab} = o(1)$ in~\eqref{divergenceMC} that
\begin{multline}
\label{divergenceMC3}
\! \left\{ \rho_n \tbinom{n}{2} \right\}^{-1} \E \sum_{i<j} \D\left\{\rho_n f\left(\xi_{i}, \xi_{j}\right) \,\middle\vert\middle\vert\, \rho_n {\bar f}\left(\xi_{i}, \xi_{j}\right) \right\} 
\\ = \frac{ \rho_n M^2 \textstyle \left( \sqrt{2} \max_{1 \leq a \leq k } h_a / n \right)^{2\alpha} \left\{ 1 + o(1) \right\} }{ \min_{1\le a,b\le k}\left\{\min\left(\rho_n{\bar f}_{ab},1-\rho_n{\bar f}_{ab}\right)\right\} } .
\end{multline}
Since~\eqref{eq:delta-bnd} of Lemma~\ref{divergenceMC2} upper-bounds each $\Delta_{ab} $ by the ratio of terms $ \rho_n M \, \bigl( \sqrt{2} \max_{ a } h_a / n \bigr)^\alpha / \min \left( \rho_n {\bar f}_{ab} , 1 -\rho_n {\bar f}_{ab} \right) $, we see that $ \Delta_{ab} = {\cal{O}}\left( r_n \right)$, and so the hypothesis $r_n \rightarrow 0$ is sufficient to imply that $ \max_{ a, b } \Delta_{ab} = o(1)$. 

We also see that the main term in~\eqref{divergenceMC3} is $ {\cal{O}}\left( r_n^2 \right) $, since the quantity $ \min_{1\le a,b\le k}\left\{\min\left({\bar f}_{ab},\rho_n^{-1}-{\bar f}_{ab}\right)\right\} \leq \sup_{(x,y)\in (0,1)^2}f\left(x,y\right) $, and thus after applying Markov's inequality via~\eqref{divergenceMC3}, we obtain the result.
\end{proof}

\begin{lemma}\label{taylor-div}
Let $ f $ be a symmetric $\operatorname{\textrm{H\"older}}^\alpha(M)$ function on $(0,1)^2$, with $ {\bar f}\left(x,y; h\right) = {\bar f}_{ H^{-1}(x) H^{-1}(y) } $ its stepfunction approximation, and let $\{\xi_{(i)}\}_{i=1}^n$ be an ordered sample of independent $\operatorname{Uniform}(0,1)$ random variables. Assume $\rho_n>0$ and $ 0 < \rho_n f\left(x,y\right) < 1 $ everywhere on $(0,1)^2$. Then for any ${\tilde z}$ such that $ \Pi_{{\tilde z}} = (\cdot)^{-1} $, with $(i)^{-1}$ denoting the rank of $\xi_i$ from smallest to largest, we have
\begin{equation*}
\rho_n^{-1} \D\left( p_{(i)(j)} \,\middle\vert\middle\vert\, \bar p_{ {\tilde z}_{(i)} {\tilde z}_{(i)} } \right)
= \rho_n^{-1} \D\left\{ p_{(i)(j)} \,\middle\vert\middle\vert\, \rho_n \bar f\left(\xi_{(i)},\xi_{(j)} \right) \right\} + {\cal O}_P\left( r_n^2 \right) 
\end{equation*}
whenever\vspace{-\baselineskip}%
\begin{equation}
\label{eq:rn-defn}
r_n = \frac{ \rho_n M \, 2^{\alpha / 2 } \left\{ \frac{ 2^{1 - \alpha } }{ n^{ \alpha / 2 } }
+ \frac{ 2 \left( \max_{1 \leq a \leq k } h_a \right)^\alpha + 1 + 2^\alpha \I\left( {\tilde z}_{(i)} = {\tilde z}_{(j)} \right) }{ n^\alpha } \right\} }{\min_{1\le a,b\le k}\left\{\min\left(\rho_n {\bar f}_{ab},1-\rho_n{\bar f}_{ab}\right)\right\}} \rightarrow 0 .
\end{equation}
\end{lemma}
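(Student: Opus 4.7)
My plan is to prove the identity by a second-order Taylor expansion of the Bernoulli Kullback--Leibler divergence $\D(p_{(i)(j)} \| \cdot)$, with expansion centre $q_2 := \rho_n \bar f(\xi_{(i)}, \xi_{(j)})$ and evaluation point $q_1 := \bar p_{\tilde z_{(i)} \tilde z_{(j)}}$. Writing $p := p_{(i)(j)}$ and using $\partial_q \D(p\|q) = (q-p)/\{q(1-q)\}$ together with $\partial_q^2 \D(p\|q) = p/q^2 + (1-p)/(1-q)^2$, Taylor's theorem with Lagrange remainder yields
\begin{equation*}
\D(p\|q_1) - \D(p\|q_2) = \frac{(q_1-q_2)(q_2-p)}{q_2(1-q_2)} + \frac{(q_1-q_2)^2}{2}\left\{\frac{p}{\tilde q^2} + \frac{1-p}{(1-\tilde q)^2}\right\}
\end{equation*}
for some $\tilde q$ between $q_1$ and $q_2$. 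Multiplying through by $\rho_n^{-1}$, the lemma follows once both summands on the right-hand side are shown to be $O_P(r_n^2)$.

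Next I would bound the two displacements $|q_2-p|$ and $|q_1-q_2|$ using $\alpha$-H\"older continuity of $f$. Because $\bar f(\xi_{(i)},\xi_{(j)})$ is the integral average of $f$ over a block of diameter at most $\sqrt{2}\max_a h_a/n$, a pointwise H\"older bound yields $|q_2 - p| \le \rho_n M (\sqrt{2}\max_a h_a/n)^{\alpha}$. For $|q_1-q_2|$, the hypothesis $\Pi_{\tilde z} = (\cdot)^{-1}$ combined with the representation in~\eqref{eq:pbar-xi} identifies $q_1$ as a sample average of $\rho_n f(\xi_{(k)},\xi_{(l)})$ over ordered points $(\xi_{(k)},\xi_{(l)})$ lying inside the same block $\omega_{ab}$ whose integral average defines $q_2$, subject to the restriction $k<l$ when $a=b$. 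Applying H\"older continuity pointwise on that block then contributes a term of order $\rho_n M(\max_a h_a/n)^\alpha$. A second contribution of order $\rho_n M n^{-\alpha/2}$ is needed because $q_2$ is evaluated at $\xi_{(i)}$ rather than at the deterministic quantile $i/n$: Markov's inequality applied to $\var\xi_{(i)} = O(n^{-1})$, together with $\alpha$-H\"older continuity, produces a displacement at scale $n^{-\alpha/2}$. Summing these two contributions along with the triangular correction $2^\alpha\I(\tilde z_{(i)}=\tilde z_{(j)})$ arising from the diagonal-block restriction $k<l$ then reproduces the explicit numerator $2(\max_a h_a)^\alpha + 1 + 2^\alpha\I(\tilde z_{(i)}=\tilde z_{(j)})$ of~\eqref{eq:rn-defn}.

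To finish, I would use the hypothesis $\min_{a,b}\min(\rho_n \bar f_{ab}, 1-\rho_n\bar f_{ab})>0$ to lower-bound $q_2(1-q_2)$ by a constant multiple of that minimum, and the hypothesis $r_n\to 0$ to conclude that $\tilde q$ is close to $q_2$, so that $\tilde q(1-\tilde q)$ is comparable to $q_2(1-q_2)$. The linear summand in the Taylor expansion then has magnitude at most a constant multiple of $\rho_n M^2(\max_a h_a/n + n^{-1/2})^{2\alpha}/\min$, which is smaller than $r_n^2$ by the factor $\min/\rho_n \le \sup f$, while the quadratic summand contributes the full $r_n^2$ order through the extra $\rho_n^{-1}\min^{-1}$ from the second derivative. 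Markov's inequality then converts the resulting expectation bounds into the claimed $O_P$ statements. \textbf{The main obstacle} is the careful bookkeeping needed to reproduce the exact constants appearing in~\eqref{eq:rn-defn}: the within-block oscillation, the order-statistic fluctuation, and the diagonal-block triangular correction enter $r_n$ with different powers of $n$ and different multiplicative factors, and must be extracted separately, even though each is individually a routine H\"older estimate.
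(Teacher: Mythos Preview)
Your approach is correct and essentially coincides with the paper's: a second-order Taylor expansion of $\D(p_{(i)(j)}\|\cdot)$ about $q_2=\rho_n\bar f(\xi_{(i)},\xi_{(j)})$, with $|q_2-p|$ bounded by the block H\"older estimate (the paper's Lemma~\ref{normbound}) and $|q_1-q_2|$ bounded by order-statistic fluctuation plus within-block oscillation plus the diagonal correction. The paper isolates the $|q_1-q_2|$ bound as a separate result (Lemma~\ref{lem:pbar-ftilde}), proving it by chaining through the deterministic lattice average $\tilde f_{ab}$ of~\eqref{eq:tildef} and then through $\bar f(i/(n+1),j/(n+1))$; this is exactly the ``careful bookkeeping'' you flag as the main obstacle, and your sketch of the ingredients matches what that lemma does.
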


\begin{proof}
We apply Taylor's theorem, after first establishing via Markov's inequality that
\begin{equation}
\delta_n = \frac{ \bar p_{ {\tilde z}_{(i)} {\tilde z}_{(i)} } - \rho_n \bar f\left(\xi_{(i)},\xi_{(j)} \right) }{ \min \left\{ \rho_n \bar f\left( \xi_{(i)} , \xi_{(j)} \right)
, 1 - \rho_n \bar f\left( \xi_{(i)}, \xi_{(j)} \right) \right\} } 
= {\cal O}_P\left( r_n \right) .
\label{rn}
\end{equation}
To show~\eqref{rn}, we lower-bound the denominator of $\delta_n$, and then apply Lemma~\ref{lem:pbar-ftilde} to upper-bound $ \E \left|\delta_n \right|$:
\begin{equation*}
\E \left|\delta_n \right| \leq \E \frac{ \rho_n\left|\rho_n^{-1}\bar p_{ {\tilde z}_{(i)} {\tilde z}_{(i)} } - \bar f\left(\xi_{(i)},\xi_{(j)} \right)\right| }{\min_{1\le a,b\le k}\left\{\min\left(\rho_n{\bar f}_{ab},1-\rho_n{\bar f}_{ab}\right)\right\}} \leq r_n .
\end{equation*}

We now apply Taylor's theorem to expand $ \D\left( p_{(i)(j)} \,\middle\vert\middle\vert\, \bar p_{ {\tilde z}_{(i)} {\tilde z}_{(i)} } \right) $ as a function of $\delta_n$ about the point $ \rho_n \bar f\left(\xi_{(i)},\xi_{(j)} \right) $. Writing $ \bar {\bar p}_{(i)(j)} $ for $ \rho_n \bar f\left(\xi_{(i)},\xi_{(j)} \right) $, we have that if $r_n \rightarrow 0 $, then
\begin{align}
\nonumber
 &\Bigl| \D\left( p_{(i)(j)} \,\middle\vert\middle\vert\, \bar p_{ {\tilde z}_{(i)} {\tilde z}_{(i)} } \right) 
- \D\left( p_{(i)(j)} \,\middle\vert\middle\vert\, \bar {\bar p}_{(i)(j)} \right) \Bigr|
= \left| \left( \bar {\bar p}_{(i)(j)} - p_{(i)(j)} \right)
\frac{ \bar p_{ {\tilde z}_{(i)} {\tilde z}_{(i)} } - \bar {\bar p}_{(i)(j)} }{ \bar {\bar p}_{(i)(j)} \left( 1 - \bar {\bar p}_{(i)(j)} \right) } \right.
\\ \nonumber & \quad \left. + \frac{1}{2} \left\{ p_{(i)(j)} \left( 1 - 2 \bar {\bar p}_{(i)(j)} \right) + \bar {\bar p}_{(i)(j)}^2 \right\}
\biggl\{ \frac{ \bar p_{ {\tilde z}_{(i)} {\tilde z}_{(i)} } - \bar {\bar p}_{(i)(j)} }{ \bar {\bar p}_{(i)(j)} \left( 1 - \bar {\bar p}_{(i)(j)} \right) } \biggr\}^2 
+o_{P}\left( \rho_n r_n^2 \right) \right|
\\ \nonumber & \,\, = \left| \frac{ \bar {\bar p}_{(i)(j)} - p_{(i)(j)} }{ \max \left( \bar {\bar p}_{(i)(j)} , 1 - \bar {\bar p}_{(i)(j)} \right) } \, \delta_n + \frac{ p_{(i)(j)} \left( 1 - 2 \bar {\bar p}_{(i)(j)} \right) + \bar {\bar p}_{(i)(j)}^2 }{ 2 \left\{ \max \left( \bar {\bar p}_{(i)(j)} , 1 - \bar {\bar p}_{(i)(j)} \right) \right\}^2 } \, \delta_n^2 + o_{P}\left( \rho_n r_n^2 \right) \right| 
\\ \label{eq:taylor-final} & \,\, < 2 \rho_n M \textstyle \left( \sqrt{2} \max_{1 \leq a \leq k } h_a / n \right)^\alpha \left| \delta_n \right| + 3\rho_n 
\sup_{(x,y)\in (0,1)^2}f\left(x,y\right)
\delta_n^2 + o_{P}\left( \rho_n r_n^2 \right) ,
\end{align}
where the terms in~\eqref{eq:taylor-final} follow because, by Lemma~\ref{normbound}, $ \left| \bar {\bar p}_{(i)(j)} - p_{(i)(j)} \right| \leq \rho_n M \, \bigl( \sqrt{2} \max_{1 \leq a \leq k } h_a / n \bigr)^\alpha $, since $ f \in \operatorname{\textrm{H\"older}}^\alpha(M)$; also, since $ 0 < \bar {\bar p}_{(i)(j)} < 1 $, we have that $ \left | 1 - 2 \bar {\bar p}_{(i)(j)} \right| / \max \left( \bar {\bar p}_{(i)(j)} , 1 - \bar {\bar p}_{(i)(j)} \right) < 1 $; and likewise we have $ \max \left( \bar {\bar p}_{(i)(j)} , 1 - \bar {\bar p}_{(i)(j)} \right) \geq 1/2 $. Since $ f \in \operatorname{\textrm{H\"older}}^\alpha(M)$ is bounded by hypothesis, the right-hand side of~\eqref{eq:taylor-final} is $ {\cal O}_P\left( \rho_n r_n^2 \right) $. The lemma follows from multiplying both sides of~\eqref{eq:taylor-final} by $ \rho_n^{-1} $.
\end{proof}

\begin{lemma}\label{lem:pbar-ftilde}
Let $ f $ be a symmetric $\operatorname{\textrm{H\"older}}^\alpha(M)$ function on $(0,1)^2$, and let $\{\xi_{(i)}\}_{i=1}^n$ be an ordered sample of independent $\operatorname{Uniform}(0,1)$ variates. Let $ \rho_n > 0 $ and define for $ z_i = H^{-1} \! \left\{ \Pi_z(i) / n \right\} $:
\begin{equation}
\label{eq:barp}
\bar p(z)_{ ab }
= \frac{ 1 }{ h_{ab }^{2} } \sum_{ j = n H(b-1)+1 }^{ n H(b) } \sum_{ i = n H(a-1)+1}^{ n H(a) \I\left( a \neq b \right) + \left( j - 1 \right) \I\left( a = b \right) } \rho_n f\left(\xi_{\Pi_z^{-1}(i)},\xi_{\Pi_z^{-1}(j)} \right) .
\end{equation}
Then for any ${\tilde z}$ such that $ \Pi_{{\tilde z}} = (\cdot)^{-1} $, with $(i)^{-1}$ denoting the rank of $\xi_i$ from smallest to largest, we have
\begin{multline}
\label{eq:pbar-f-bnd}
\E \left| \rho_n^{-1} \, \bar p_{ {\tilde z}_{(i)} {\tilde z}_{(j)} } \! - \! \bar f\left(\xi_{(i)},\xi_{(j)} \right) \right| 
\\ \leq M \, 2^{\alpha / 2 } \left\{ \frac{ 2^{1 - \alpha } }{ n^{ \alpha / 2 } }
\! + \! \frac{ 2 \left( \max_{1 \leq a \leq k } h_a \right)^\alpha \!\! + \! 1 \! + \! 2^\alpha \I\left( {\tilde z}_{(i)} \! = \! {\tilde z}_{(j)} \right) }{ n^\alpha } \right\} \! .
\end{multline}
\end{lemma}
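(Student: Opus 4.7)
The plan is to decompose the difference via a triangle inequality through the intermediate block average $\bar f_{\tilde z_{(i)} \tilde z_{(j)}}$:
\begin{equation*}
\rho_n^{-1} \bar p_{\tilde z_{(i)} \tilde z_{(j)}} - \bar f(\xi_{(i)}, \xi_{(j)}) = \bigl\{ \rho_n^{-1} \bar p_{\tilde z_{(i)} \tilde z_{(j)}} - \bar f_{\tilde z_{(i)} \tilde z_{(j)}} \bigr\} + \bigl\{ \bar f_{\tilde z_{(i)} \tilde z_{(j)}} - \bar f(\xi_{(i)}, \xi_{(j)}) \bigr\}.
\end{equation*}
With $\Pi_{\tilde z} = (\cdot)^{-1}$, the inverse substitution recovers order statistics, so the first bracket is precisely the gap between a discrete average of $f\left(\xi_{(i')}, \xi_{(j')}\right)$ over index pairs $(i',j')$ in block $(a,b) := (\tilde z_{(i)}, \tilde z_{(j)})$ and the continuous block average $\bar f_{ab}$; the second bracket measures the label mismatch that arises when $H^{-1}(\xi_{(i)}) \neq \tilde z_{(i)}$ or $H^{-1}(\xi_{(j)}) \neq \tilde z_{(j)}$, an event that can occur only when $\xi_{(i)}$ or $\xi_{(j)}$ crosses a partition boundary of $H$.

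To control the first bracket, I rewrite $\bar f_{ab}$ as the uniform average over the block's pixels of the mean-value integrals $n^2 \iint_{\text{pixel}(i',j')} f \, dx\, dy$, and apply the pixel-wise triangle inequality through the lattice point $\bigl( \tfrac{i'}{n+1}, \tfrac{j'}{n+1} \bigr)$:
\begin{equation*}
\bigl| f\left( \xi_{(i')}, \xi_{(j')} \right) - n^2 \!\!\iint_{\text{pixel}(i',j')} \!\! f \bigr| \leq \bigl| f\left( \xi_{(i')}, \xi_{(j')} \right) - f\bigl( \tfrac{i'}{n+1}, \tfrac{j'}{n+1} \bigr) \bigr| + \bigl| f\bigl( \tfrac{i'}{n+1}, \tfrac{j'}{n+1} \bigr) - n^2 \!\!\iint_{\text{pixel}(i',j')} \!\! f \bigr|.
\end{equation*}
Hölder continuity controls both terms; for the first I use $\var \xi_{(i')} \leq 1/\{4(n+2)\}$, which together with Jensen's inequality ($\alpha \leq 1$) gives $\E \left| \left( \xi_{(i')}, \xi_{(j')} \right) - \bigl( \tfrac{i'}{n+1}, \tfrac{j'}{n+1} \bigr) \right|^\alpha \leq \left\{ 1/(2(n+2)) \right\}^{\alpha/2}$, producing the $n^{-\alpha/2}$ piece of the bound. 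The second gap is at most $M (\sqrt{2}/n)^\alpha$ from the pixel diameter, yielding the $n^{-\alpha}$ piece. Averaging these pixel-wise bounds over the block preserves the rates.

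For the second bracket, both $\bar f_{ab}$ and $\bar f_{H^{-1}(\xi_{(i)}) H^{-1}(\xi_{(j)})}$ are averages of $f$ over (possibly different) blocks of diameter at most $\sqrt{2} \left( \max_a h_a \right) / n$, so Hölder continuity yields a gap of at most $2 M \bigl( \sqrt{2} \max_a h_a / n \bigr)^\alpha$, contributing the $(\max_a h_a)^\alpha / n^\alpha$ term. On the diagonal ($\tilde z_{(i)} = \tilde z_{(j)}$), the sum defining $\bar p_{aa}$ runs over $\binom{h_a}{2}$ terms and omits the pixels $i' = j'$, while $\bar f_{aa}$ integrates over all of $\omega_{aa}$ including the diagonal strip of width $1/n$; reconciling these gives the extra $2^\alpha/n^\alpha$ contribution governed by the indicator.

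The main obstacle is bookkeeping rather than anything conceptually delicate: carefully tracking the $2^{\alpha/2}$ factors emerging from the 2D Euclidean norm, separately handling the denominator change from $h_a^2$ to $\binom{h_a}{2}$ on the diagonal together with the missing diagonal strip, and verifying that the mismatch event $H^{-1}(\xi_{(i)}) \neq \tilde z_{(i)}$ (although rare) is absorbed cleanly into the worst-case Hölder bound on the block diameters. Once these pieces are assembled and the per-pixel expectations averaged over the block, the stated inequality follows directly.
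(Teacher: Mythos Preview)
Your two-term decomposition through $\bar f_{\tilde z_{(i)}\tilde z_{(j)}}$ is sound and essentially matches the paper's route (the paper inserts one more intermediate, a deterministic lattice average $\tilde f$, but that just splits your first bracket in two). Your treatment of the first bracket is correct, including the diagonal bookkeeping.

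The gap is in your second bracket. You claim that since $\bar f_{\tilde z_{(i)}\tilde z_{(j)}}$ and $\bar f\bigl(\xi_{(i)},\xi_{(j)}\bigr)=\bar f_{H^{-1}(\xi_{(i)})\,H^{-1}(\xi_{(j)})}$ are both block averages over rectangles of diameter at most $\sqrt{2}\,h_\vee/n$, their difference is at most $2M(\sqrt{2}\,h_\vee/n)^\alpha$. This is \emph{not} a valid deterministic bound: the two blocks need not be adjacent. With positive probability $\xi_{(i)}$ lands several partition cells away from $i/n$, and then $|\bar f_{ab}-\bar f_{a'b'}|$ can be of order~$1$, not $(h_\vee/n)^\alpha$. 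Your remark that the mismatch is ``rare'' and ``absorbed cleanly'' does not repair this, because you never actually bound the tail contribution from multiple boundary crossings.

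The fix is to chain through $f$ itself, not through block averages. Since $\|f-\bar f\|_\infty\le M(\sqrt{2}\,h_\vee/n)^\alpha$ holds \emph{pointwise},
\[
\bigl|\bar f(\xi_{(i)},\xi_{(j)})-\bar f(i_n,j_n)\bigr|
\le 2M\Bigl(\tfrac{\sqrt{2}\,h_\vee}{n}\Bigr)^{\!\alpha}
+\bigl|f(\xi_{(i)},\xi_{(j)})-f(i_n,j_n)\bigr|,
\]
where $i_n=i/(n+1)$, and one checks (as the paper does) that $(i_n,j_n)\in\omega_{\tilde z_{(i)}\tilde z_{(j)}}$ so that $\bar f(i_n,j_n)=\bar f_{\tilde z_{(i)}\tilde z_{(j)}}$. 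Taking expectations, the middle term contributes $M\{2(n+2)\}^{-\alpha/2}$ by the order-statistic variance bound you already used. This supplies a \emph{second} $n^{-\alpha/2}$ contribution---which is exactly why the stated constant is $2^{1-\alpha}$ (i.e.\ two copies of $2^{-\alpha/2}$ after factoring out $2^{\alpha/2}$), and why your accounting, with only one such contribution from the first bracket, does not reproduce the lemma's bound.
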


\begin{proof}
Define the $k\times k$ matrix $ \tilde f$ such that $ \rho_n^{-1} \, \bar p( \tilde z )_{ab} = \tilde f( \tilde z )_{ a b } + {\cal O}_P\bigr( n^{-\alpha/2}\bigr) $ when $f$ is $\alpha$-H\"older:
\begin{equation}
\label{eq:tildef}
\tilde f( z )_{ a b } = \frac{ 1 }{h_{a b }^2} \sum_{ j = n H(b-1)+1 }^{ n H(b) } \sum_{ i = n H(a-1)+1}^{ n H(a) \I\left( a \neq b \right) + \left( j - 1 \right) \I\left( a = b \right) } f\left( \tfrac{ \Pi_z^{-1} \left\{ (i)^{-1} \right\} }{n+1} , \tfrac{ \Pi_z^{-1} \left\{ (j)^{-1} \right\} }{n+1} \right) .
\end{equation}
Note that $ \tilde f( \tilde z ) $ is deterministic, since the set of admissible $ \tilde z $ has been chosen such that $ \Pi_z^{-1} \left\{ (i)^{-1} \right\} = i $ for all $ 1 \leq i \leq n $. We will then obtain our claimed result by bounding the expectation of
\begin{multline}
\label{eq:barp-triangle}
\left| \rho_n^{-1} \, \bar p_{ {\tilde z}_{(i)} {\tilde z}_{(j)} } - \bar f\left(\xi_{(i)},\xi_{(j)} \right) \right| 
\\ \leq 
\left| \rho_n^{-1} \, \bar p_{ {\tilde z}_{(i)} {\tilde z}_{(j)} } - {\tilde f}_{ {\tilde z}_{(i)} {\tilde z}_{(j)} } \right| 
+ \left| {\tilde f}_{ {\tilde z}_{(i)} {\tilde z}_{(j)} } - \bar f\left( i_n,j_n\right) \right|
+ \left| \bar f\left( i_n,j_n\right) - \bar f\left(\xi_{(i)},\xi_{(j)} \right) \right| .
\end{multline}

We begin with the final term in~\eqref{eq:barp-triangle}, for which Lemma~\ref{ftilde} immediately yields
\begin{equation}
\label{eq:barp-triangleI}
 \E \, \bigl| \bar f\left( i_n,j_n\right) - \bar f\left(\xi_{(i)},\xi_{(j)} \right) \bigr| \leq M \left\{ 2(n+2) \right\}^{- \alpha / 2 } + 2 \textstyle M \left( \sqrt{2} \max_{1 \leq a \leq k } h_a / n \right)^\alpha . 
\end{equation} 

Next we consider the first term in~\eqref{eq:barp-triangle}. To bound its expectation, note that both $ \rho_n^{-1} \, \bar p( {\tilde z} )_{ a b } $ and $ {\tilde f}( {\tilde z} )_{ a b } $ are averages over the same subset of indices $ (i,j) $. From~\eqref{eq:barp} and~\eqref{eq:tildef}, we then have that
\begin{align}
& \nonumber \E \left| \rho_n^{-1} \, \bar p( {\tilde z} )_{ a b } - {\tilde f}( {\tilde z} )_{ a b } \right| 
\\ & \,\, \leq \frac{ 1 }{h_{a b }^2} \sum_{ j = n H(b-1)+1 }^{ n H(b) } 
\!\!\!\! \sum_{ i = n H(a-1)+1}^{ n H(a) \I\left( a \neq b \right) + \left( j - 1 \right) \I\left( a = b \right) } \!\!\! \E \left| f\left(\xi_{(i)},\xi_{(j)} \right) - f\bigl( \tfrac{i}{n+1} , \tfrac{j}{n+1} \bigr) \right|
\\ \label{eq:barp-triangleII} & \,\, 
\leq 1 \cdot M \left\{ 2(n+2) \right\}^{ - \alpha / 2 } ,
\end{align}
with the final inequality following again from Lemma~\ref{ftilde}. Since~\eqref{eq:barp-triangleII} holds uniformly over all $ {\tilde z} $ and every $ 1 \leq a , b \leq k $, we have bounded $ \E \, \bigl| \rho_n^{-1} \, \bar p_{ {\tilde z}_{(i)} {\tilde z}_{(j)} } - {\tilde f}_{ {\tilde z}_{(i)} {\tilde z}_{(j)} } \bigr| $. 

It remains only to bound $ \E \, \bigl| {\tilde f}_{ {\tilde z}_{(i)} {\tilde z}_{(j)} } - \bar f\left( i_n,j_n\right) \bigr| $. We will do so using the following deterministic upper bound, which we prove below, and which holds uniformly over all $ {\tilde z} $ and every $ 1 \leq a , b \leq k $:
\begin{align}
\label{eq:fbarbarint}
\left| {\tilde f}( {\tilde z} )_{ a b } - {\bar f}( {\tilde z} )_{ab} \right| 
& \leq M \, \bigl\{ \sqrt{2} / ( n + 1 ) \bigr\}^\alpha + M \, \bigl( \sqrt{ 2 } \, h_a / n \bigr)^{\alpha} \left( h_a - 1 \right)^{-1} \I\left( a = b \right)
\\ \label{eq:barp-triangleIII} & \leq M \, 2^{\alpha / 2 } \, n^{ - \alpha } \left\{ 1 + 2^\alpha \I\left( a = b \right) \right\} .
\end{align}
Here the second inequality following because, by definition, any $H(\cdot)$ has $\min_{1\leq a \leq k} h_a \geq 2 $. 

Lemma~\ref{partition} yields $ \left( i_n , j_n \right) \in \omega_{ {\tilde z}_{(i)} {\tilde z}_{(j)} } $ for any $ \tilde z $; thus $ {\bar f}_{ {\tilde z}_{(i)} {\tilde z}_{(j)} } = \bar f\left( i_n , j_n \right) $, and so if~\eqref{eq:fbarbarint} holds, then it applies to $ \bigl| {\tilde f}_{ {\tilde z}_{(i)} {\tilde z}_{(j)} } - \bar f\left( i_n, j_n \right) \bigr| $. Finally, summing~\eqref{eq:barp-triangleI},~\eqref{eq:barp-triangleII} and \eqref{eq:barp-triangleIII} to obtain~\eqref{eq:pbar-f-bnd} completes the proof.

To establish~\eqref{eq:fbarbarint}, let $ i_n = i / (n+1) $, and multiply $ {\tilde f}( {\tilde z} )_{ a b } $ from~\eqref{eq:tildef} by $ 1 = n^2 / n^2 $ to obtain
\begin{align}
\nonumber
{\tilde f}( {\tilde z} )_{ a b } & =
\frac{ n^2 }{h_{a b }^2} 
\sum_{ j = n {H}(b-1)+1}^{ n {H}(b) } 
\sum_{ i = n {H}(a-1)+1}^{ n {H}(a) \I\left( a \neq b \right) + \left( j - 1 \right) \I\left( a = b \right) } 
\left( \frac{ 1 }{ n^2 } \right) 
f\left(i_n,j_n \right), \,\, 1 \leq a < b \leq k,
\\ \nonumber & = 
\frac{ n^2 }{h_{a b }^2} 
\sum_{ j = n {H}(b-1)+1}^{ n {H}(b) } 
\sum_{ i = n {H}(a-1)+1}^{ n {H}(a) \I\left( a \neq b \right) + \left( j - 1 \right) \I\left( a = b \right) } 
\left(
\int_{\frac{j-1}{n}}^{\frac{j}{n}}
\int_{\frac{i-1}{n}}^{\frac{i}{n}} 
\, dx \,dy \right) 
f\left(i_n,j_n \right)
\\ \label{eq:fbar-splitI} & =
\frac{ n^2 }{h_{a b }^2} 
\sum_{ j = n {H}(b-1)+1}^{ n {H}(b) } 
\sum_{ i = n {H}(a-1)+1}^{ n {H}(a) \I\left( a \neq b \right) + \left( j - 1 \right) \I\left( a = b \right) } 
\\ & \qquad \cdot
\int_{\frac{j-1}{n}}^{\frac{j}{n}}
\int_{\frac{i-1}{n}}^{\frac{i}{n}} 
\left[ {f}\left(x,y\right) + \left\{ {f}\left(i_n,j_n \right) - {f}\left(x,y\right) \right\} \right]
\,dx\, dy .
\end{align}
From~\eqref{eq:fbar-splitI} we will obtain the left-hand side of~\eqref{eq:fbarbarint}, plus a remainder term when $ a = b $, by writing
\begin{multline}
\label{eq:fbar-split}
{\tilde f}( {\tilde z} )_{ a b } - \frac{ n^2 }{h_{a b }^2} 
\sum_{ j = n {H}(b-1)+1}^{ n {H}(b) } 
\sum_{ i = n {H}(a-1)+1}^{ n {H}(a) \I\left( a \neq b \right) + \left( j - 1 \right) \I\left( a = b \right) } 
\int_{\frac{j-1}{n}}^{\frac{j}{n}}
\int_{\frac{i-1}{n}}^{\frac{i}{n}} 
f\left(x,y \right) \, dx \, dy 
\\ = {\tilde f}( {\tilde z} )_{ a b } - 
\begin{cases}
\displaystyle
\frac{n^2}{ h_a h_b } 
\int_{ H(b-1)}^{H(b)}
\int_{ H(a-1)}^{H(a)}
f\left(x,y \right) \, dx \, dy 
& a \neq b ,
\\ \displaystyle \frac{ n^2 }{ \binom{h_b}{2} }
\sum_{ j = n H(b-1) + 1}^{ n H(b) }
\int_{\frac{j-1}{n}}^{\frac{j}{n}}
\left(
\int_{ H(a-1)}^{y} 
- 
\int_{\frac{j-1}{n}}^y
\right)
f\left(x,y \right) \, dx \, dy
& a = b .
\end{cases}
\end{multline}
We recognize the first case in~\eqref{eq:fbar-split} as $ {\bar f}( {\tilde z} )_{ab , a \neq b} $. Since $f$ is symmetric, the second case can be written
\begin{align*} 
{\bar f}( {\tilde z} )_{bb} & + 
\sum_{ j = n H(b-1) + 1}^{ n H(b) }
\int_{\frac{j-1}{n}}^{\frac{j}{n}}
\left[ \left\{ \frac{ n^2 }{ \binom{h_b}{2} } - \frac{ 2 n^2 }{ h_b^2 } \right\}
\int_{ H(b-1)}^{y} 
- \frac{ n^2 }{ \binom{h_b}{2} }
\int_{\frac{j-1}{n}}^y
\right]
f\left(x,y \right) \, dx \, dy
\\ & = {\bar f}( {\tilde z} )_{bb} +
\frac{ 1 }{ h_b - 1 }
\sum_{ j = n H(b-1) + 1}^{ n H(b) }
\int_{\frac{j-1}{n}}^{\frac{j}{n}}
\left\{ \frac{ 2 n^2 }{ h_b^2 }
\int_{ H(b-1)}^{y} 
- \frac{ 2 n^2 }{ h_b }
\int_{\frac{j-1}{n}}^y
\right\}
f\left(x,y \right) \, dx \, dy
\\ & = {\bar f}( {\tilde z} )_{bb} +
\frac{ 1 }{ h_b - 1 }
\left\{ {\bar f}( {\tilde z} )_{bb} -
\frac{ 1 }{ h_b }
\sum_{ j = n H(b-1) + 1}^{ n H(b) }
2 n^2 \int_{\frac{j-1}{n}}^{\frac{j}{n}}
\int_{\frac{j-1}{n}}^y
f\left(x,y \right) \, dx \, dy
\right\}
\\ & = {\bar f}( {\tilde z} )_{bb} +
\frac{ 1 }{ h_b - 1 }
\left[
\frac{ 1 }{ h_b }
\sum_{ j = n H(b-1) + 1}^{ n H(b) }
2 n^2 \int_{\frac{j-1}{n}}^{\frac{j}{n}}
\int_{\frac{j-1}{n}}^y
\left\{ {\bar f}( {\tilde z} )_{bb} -
f\left(x,y \right) \right\} \, dx \, dy
\right] .
\end{align*}
Since $ \bar f\left(x,y ; h \right) = {\bar f}( {\tilde z} )_{bb} $ on the domain of interest $ \omega_{bb} = \left[ H(b-1) , H(b) \right)^2 $, we conclude 
\begin{align*}
\frac{ 1 }{ h_b }
\sum_{ j = n H(b-1) + 1}^{ n H(b) }
2 n^2 \int_{\frac{j-1}{n}}^{\frac{j}{n}}
\int_{\frac{j-1}{n}}^y
\left| {\bar f}( {\tilde z} )_{bb} -
f\left(x,y \right) \right| \, dx \, dy
& \leq 1 \cdot 1 \cdot 
\left\| {\bar f} - f\vert_{\omega_{bb}} \right\|_{ L_\infty \left( \omega_{bb} \right) }
\\ & \leq M \, \bigl( \sqrt{ 2 } \, h_b / n \bigr)^{\alpha} ,
\end{align*}
with the latter inequality from~\eqref{eq:lips-omega} of Lemma~\ref{normbound}, since $f \in \operatorname{\textrm{H\"older}}^\alpha(M)$. This yields the upper bound term in~\eqref{eq:fbarbarint} specific to $ a = b $. To derive the main term in~\eqref{eq:fbarbarint}, we return to~\eqref{eq:fbar-splitI}, noting from Lemma~\ref{ftilde}:
\begin{align*} 
\left| \frac{n^2}{h_{a b }^2}
\vphantom{\int_{\frac{j-1}{n}}^{\frac{j}{n}}\int_{\frac{i-1}{n}}^{\frac{i}{n}}}
\right. & \left.
\sum_{ j = n H(b-1) + 1}^{ n H(b) }
\sum_{ i = n H(a-1) + 1}^{ n H(a) \I\left( a \neq b \right) + ( j - 1 ) \I\left( a = b \right) } 
\int_{\frac{j-1}{n}}^{\frac{j}{n}}
\int_{\frac{i-1}{n}}^{\frac{i}{n}} 
\left\{ {f}\left(i_n,j_n \right)-{f}\left(x,y\right)\right\} 
\, dx \, dy\right|
\\ & \le \frac{ 1 }{h_{a b }^2}
\sum_{ j = n H(b-1) + 1}^{ n H(b) }
\sum_{ i = n H(a-1) + 1}^{ n H(a) \I\left( a \neq b \right) + ( j - 1 ) \I\left( a = b \right) } 
\\ & \qquad \cdot
n^2
\int_{\frac{j-1}{n}}^{\frac{j}{n}}
\int_{\frac{i-1}{n}}^{\frac{i}{n}} 
\left| f\left(i_n,j_n \right)-{f}\left(x,y\right) \right|
\, dx \, dy
\\ & \le 1 \cdot 1 \cdot M \, \bigl\{ \sqrt{2} / ( n + 1 ) \bigr\}^\alpha .
\end{align*}
\begin{equation*}
\vspace{-2\baselineskip}
\end{equation*}
\end{proof}

\begin{lemma}\label{normbound}
Let $ f $ be a $\operatorname{\textrm{H\"older}}^\alpha(M)$ function on $(0,1)^2$, with $\bar f\left(x,y; h\right) = {\bar f}_{ H^{-1}(x) H^{-1}(y) } $ its stepfunction approximation. Then for all $ 0 < p \leq \infty $,
\begin{equation*}
\|f-{\bar f}\|_{ L_p \left( ( 0 , 1 )^2 \right) } \leq \textstyle M \left( \sqrt{2} \max_{1 \leq a \leq k } h_a / n \right)^\alpha .
\end{equation*}
\end{lemma}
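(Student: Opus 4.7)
The plan is to bound $\|f-\bar f\|_{L_\infty}$ directly and then use the fact that $(0,1)^2$ has unit Lebesgue measure to deduce the same bound in every $L_p$.

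First I would fix an arbitrary block $\omega_{ab} = [H(a-1),H(a)) \times [H(b-1),H(b))$ and a point $(x,y) \in \omega_{ab}$. Since $\bar f(x,y;h) = \bar f_{ab}$ is the average of $f$ over $\omega_{ab}$, I write
\begin{equation*}
\bigl| f(x,y) - \bar f_{ab} \bigr|
= \Bigl| \tfrac{1}{|\omega_{ab}|} \iint_{\omega_{ab}} \{ f(x,y) - f(x',y') \} \, dx' \, dy' \Bigr|
\leq \tfrac{1}{|\omega_{ab}|} \iint_{\omega_{ab}} M \, \bigl| (x,y) - (x',y') \bigr|^\alpha \, dx' \, dy',
\end{equation*}
where the inequality uses the $\operatorname{\textrm{H\"older}}^\alpha(M)$ hypothesis from~\eqref{eq:Holder-defn}. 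Both $(x,y)$ and $(x',y')$ lie in $\omega_{ab}$, so their Euclidean distance is at most the diameter of the block, which is $\sqrt{(h_a/n)^2 + (h_b/n)^2} \leq \sqrt{2} \max_{1 \leq a \leq k} h_a / n$.

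Next I would take the supremum over all $(x,y) \in (0,1)^2$. Because this diameter bound holds uniformly across blocks (being controlled by $\max_a h_a / n$) and because $\bar f$ is piecewise constant on the collection $\{ \omega_{ab} \}$ which partitions $(0,1)^2$, I obtain
\begin{equation*}
\| f - \bar f \|_{L_\infty\left((0,1)^2\right)} \leq M \bigl( \sqrt{2} \max_{1 \leq a \leq k} h_a / n \bigr)^\alpha.
\end{equation*}
Finally, for any $0 < p < \infty$, the fact that $(0,1)^2$ has unit measure gives
\begin{equation*}
\| f - \bar f \|_{L_p\left((0,1)^2\right)} = \Bigl( \iint_{(0,1)^2} |f - \bar f|^p \Bigr)^{1/p} \leq \| f - \bar f \|_{L_\infty\left((0,1)^2\right)},
\end{equation*}
which completes the argument.

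There is no real obstacle here: the lemma is a direct consequence of Hölder continuity combined with the elementary observation that averaging kills no more than one Hölder modulus of smoothness over each block. The only subtlety worth stating carefully is that the relevant diameter is the Euclidean diameter of the rectangular block $\omega_{ab}$, which introduces the $\sqrt{2}$ factor, and that one bounds by $\max_a h_a$ uniformly since the partition sizes along each axis coincide. The extension from $p = \infty$ to $0 < p < \infty$ is automatic on a probability space.
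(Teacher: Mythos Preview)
Your proposal is correct and follows essentially the same approach as the paper: write the pointwise error as an average of H\"older increments over the block, bound by the block's Euclidean diameter $\sqrt{h_a^2+h_b^2}/n \le \sqrt{2}\max_a h_a/n$, take the supremum to get the $L_\infty$ bound, and then pass to $L_p$ using unit measure. The only cosmetic difference is that the paper records the intermediate per-block bound $\|\bar f - f|_{\omega_{ab}}\|_{L_\infty(\omega_{ab})} \le M(\sqrt{h_a^2+h_b^2}/n)^\alpha$ explicitly before maximizing.
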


\begin{proof}
Let $ \omega_{ab} = \left[ H(a-1) , H(a) \right) \times \left[ H(b-1) , H(b) \right) \subseteq (0,1)^2 $, and denote by $f\vert_{\omega_{ab}}$ the restriction of $f$ to $\omega_{ab}$. By the definitions of ${\bar f}_{ab}$ and $\bar f\left(x,y\right)$,
\begin{align}
\nonumber
\left| {\bar f}_{ab} - f\left(x,y\right) \right| & = \left| \frac{1}{|\omega_{ab}|} \iint_{ \omega_{ab} } f\left(x',y'\right) \, dx' \,dy' - f\left(x,y\right) \right| , \,\, \left(x,y\right) \in (0,1)^2
\\ \nonumber
\Rightarrow \, \left| {\bar f}\left(x,y\right)-f\left(x,y\right)\right|
&\leq \frac{1}{|\omega_{ab}|} \iint_{ \omega_{ab} } \left| f\left(x',y'\right) - f\left(x,y\right) \right| \, dx' \,dy' , \quad \left(x,y\right) \in \omega_{ab} 
\\ \nonumber
\Rightarrow \, \left\| {\bar f} - f\vert_{\omega_{ab}} \right\|_{ L_\infty \left( \omega_{ab} \right) } 
&\leq \frac{1}{|\omega_{ab}|} \iint_{\omega_{ab}} \left| f\left(x',y'\right) - f\left(x,y\right)\right| \, dx' \,dy' , \quad \left(x,y\right) \in \omega_{ab} 
\\ \label{eq:lips-omega-pre}
&\leq \frac{M}{|\omega_{ab}|} \iint_{\omega_{ab}} \left| (x, y)-(x', y') \right|^{\alpha} \, dx' \,dy' , \quad \left(x,y\right) \in \omega_{ab} ,
\end{align}
since $ \left| f\left( x,y\right)-f\left(x',y'\right)\right| \leq M \left| \left(x,y\right) - \left(x',y'\right) \right|^\alpha = M \, \{ (x-x')^2 + (y-y')^2 \}^{\alpha/2} $ holds on $(0,1)^2$.

To simplify~\eqref{eq:lips-omega-pre}, note that the diameter $ \sup_{(x,y),(x',y') \in \omega_{ab}} \left| (x, y) - (x', y') \right| $ of the rectangular domain $\omega_{ab}$ evaluates to $ \sqrt{ h_a^2 + h_b^2 } / n $, where $ h_a = H(a) - H(a-1) $. Thus~\eqref{eq:lips-omega-pre} implies
\begin{equation}
\label{eq:lips-omega}
\left\| {\bar f} - f\vert_{\omega_{ab}} \right\|_{ L_\infty \left( \omega_{ab} \right) } \leq M \, \bigl( { \textstyle \sqrt{ h_a^2 + h_b^2 } } / n \bigr)^{\alpha},\quad 1\le a,b\le k,
\end{equation}
and so we immediately conclude $ \| \bar f - f \|_{ L_\infty \left( ( 0 , 1 )^2 \right) } \leq M \left( \sqrt{2} \max_a h_a / n \right)^\alpha $. Thus for any $0<p<\infty$,
\begin{align*}
\| \bar f - f \|_{ L_p \left( ( 0 , 1 )^2 \right) }^p 
& = \iint_{(0,1)^2} \left| \bar f\left(x,y\right)- \left(x,y\right)\right|^p\, dx \,dy
\\ & \leq\iint_{(0,1)^2} \left\{ \| \bar f - f \|_{ L_\infty \left( ( 0 , 1 )^2 \right) } \right\}^p \, dx \,dy .%
\vspace{-\baselineskip}%
\end{align*}%
\end{proof}

\begin{lemma}\label{ftilde}
Let $ f $ be a $\operatorname{\textrm{H\"older}}^\alpha(M)$ function on $(0,1)^2$, and let $\{\xi_{(i)}\}_{i=1}^n$ be an ordered sample of independent $\operatorname{Uniform}(0,1)$ random variables. Then, recalling that $\E\xi_{(i)}=i/(n+1)$, we have for $ 1 \leq i, j \leq n $:
\begin{alignat*}{2}
\E \left| f\left( \xi_{(i)},\xi_{(j)}\right) - f\bigl( \tfrac{i}{n+1} , \tfrac{j}{n+1} \bigr) \right|^{\beta} & \leq M^{\beta} \left\{ 2(n+2) \right\}^{ - \alpha\beta / 2 } , \hskip2cm 0<\beta\leq 2 ;
\\ \E \left| \bar f\left( \xi_{(i)},\xi_{(j)}\right) - \bar f\bigl( \tfrac{i}{n+1} , \tfrac{j}{n+1} \bigr) \right| & \leq M \left\{ 2(n+2) \right\}^{ - \alpha / 2 } + 2 M \textstyle \left( \sqrt{2} \max_{1 \leq a \leq k } h_a / n \right)^\alpha \!\! , \,\,
\end{alignat*}
where $ {\bar f}\left(x,y; h\right) = {\bar f}_{ H^{-1}(x) H^{-1}(y) } $ is the stepfunction approximation of $f$. Furthermore, we have for $ 1 \leq i, j \leq n $ that
\begin{equation*}
\left| {f}\bigl( \tfrac{i}{n+1} , \tfrac{j}{n+1} \bigr) -{f}\left(x,y\right)\right| \le M \, \bigl\{ \sqrt{2} / ( n + 1 ) \bigr\}^\alpha, \qquad (x,y)\in \left( \tfrac{i-1}{n},\tfrac{i}{n} \right) \times \left( \tfrac{j-1}{n},\tfrac{j}{n} \right).
\end{equation*} 
\end{lemma}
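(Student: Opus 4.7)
The plan is to handle the three claims in reverse order of difficulty, exploiting that each reduces either to a variance bound for uniform order statistics, a deterministic distance computation, or a triangle-inequality combination of the two.

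I would start with claim~3, which is purely deterministic. For $(x,y) \in \bigl( \tfrac{i-1}{n}, \tfrac{i}{n} \bigr) \times \bigl( \tfrac{j-1}{n}, \tfrac{j}{n} \bigr)$, direct evaluation gives $\tfrac{i-1}{n} - \tfrac{i}{n+1} = \tfrac{i - n - 1}{n(n+1)}$ and $\tfrac{i}{n} - \tfrac{i}{n+1} = \tfrac{i}{n(n+1)}$, whence the coordinate-wise bound $\bigl| x - \tfrac{i}{n+1} \bigr| \leq \max(i, n+1-i)/[n(n+1)] \leq 1/(n+1)$, and analogously in the $y$-coordinate. Thus the Euclidean distance from $(x,y)$ to $\bigl( \tfrac{i}{n+1}, \tfrac{j}{n+1} \bigr)$ is at most $\sqrt{2}/(n+1)$, and the claim is immediate from the H\"older condition~\eqref{eq:Holder-defn}.

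For claim~1, I would invoke H\"older continuity to obtain the almost-sure bound $\bigl| f( \xi_{(i)},\xi_{(j)}) - f( \tfrac{i}{n+1}, \tfrac{j}{n+1} ) \bigr|^{\beta} \leq M^{\beta} \bigl\{ ( \xi_{(i)} - \tfrac{i}{n+1} )^{2} + ( \xi_{(j)} - \tfrac{j}{n+1} )^{2} \bigr\}^{\alpha\beta/2}$. Since $\alpha\beta/2 \in (0,1]$ whenever $\alpha \in (0,1]$ and $\beta \in (0,2]$, Jensen's inequality applied to the concave map $u \mapsto u^{\alpha\beta/2}$ bounds the expectation by $M^{\beta} \bigl\{ \var \xi_{(i)} + \var \xi_{(j)} \bigr\}^{\alpha\beta/2}$. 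Using the fact that $\xi_{(i)}$ is $\operatorname{Beta}(i, n-i+1)$-distributed, I would compute $\var \xi_{(i)} = i(n-i+1)/[(n+1)^{2}(n+2)] \leq 1/[4(n+2)]$ by AM--GM on $i$ and $n-i+1$, so the two variances sum to at most $1/[2(n+2)]$, yielding the stated constant $\{2(n+2)\}^{-\alpha\beta/2}$.

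Claim~2 then follows from the triangle-inequality decomposition $\bar f(\xi_{(i)}, \xi_{(j)}) - \bar f( \tfrac{i}{n+1}, \tfrac{j}{n+1} ) = [\bar f - f](\xi_{(i)}, \xi_{(j)}) + [f(\xi_{(i)},\xi_{(j)}) - f(\tfrac{i}{n+1}, \tfrac{j}{n+1})] + [f - \bar f](\tfrac{i}{n+1}, \tfrac{j}{n+1})$: in expected absolute value, the two outer terms are each bounded by $\|\bar f - f\|_{L_\infty((0,1)^2)} \leq M(\sqrt{2}\max_a h_a / n)^\alpha$ via Lemma~\ref{normbound}, while the middle term is controlled by claim~1 with $\beta = 1$. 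I do not anticipate any serious obstacle; the argument is essentially bookkeeping around H\"older continuity, a single Jensen step, and a standard variance computation for uniform order statistics. The only point warranting a little care is the sharp $1/(n+1)$ (rather than $1/n$) factor in claim~3, which arises because $\tfrac{i}{n+1}$ happens to lie \emph{inside} the interval $(\tfrac{i-1}{n}, \tfrac{i}{n})$, so both deviations $\tfrac{i-1}{n} - \tfrac{i}{n+1}$ and $\tfrac{i}{n} - \tfrac{i}{n+1}$ contribute symmetrically.
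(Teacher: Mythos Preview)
Your proposal is correct and follows essentially the same approach as the paper's own proof: the paper likewise uses the H\"older condition combined with Jensen's inequality and the variance bound $\var \xi_{(i)} = i_n(1-i_n)/(n+2) \leq 1/\{4(n+2)\}$ for claim~1, the same three-term triangle-inequality chaining through $f$ together with Lemma~\ref{normbound} for claim~2, and a direct distance computation for claim~3. Your treatment of claim~3 is in fact cleaner than the paper's, which writes the supremum in a slightly awkward form but arrives at the same $\sqrt{2}/(n+1)$ bound.
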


\begin{proof}
Let $ i_n = \E\xi_{(i)} = i / (n+1) $. Since $f \in \operatorname{\textrm{H\"older}}^\alpha(M)$, it holds everywhere on $(0,1)^2$ that
\begin{equation*}
\label{eq:lips-ij}
\left| f\left( \xi_{(i)},\xi_{(j)}\right)-f\left( i_n,j_n\right)\right|^{\beta}\leq \left\{M\left| (
\xi_{(i)} , \xi_{(j)})-(
i_n, j_n)\right|^{\alpha}\right\}^{\beta}, \quad 1 \leq i, j \leq n ,
\end{equation*}
where $\left|\cdot\right|$ is the Euclidean metric on ${\mathbb{R}}^2$. By Jensen's inequality, we have for any $0 < \alpha \beta \leq 2 $ that for $ 1 \leq i, j \leq n $,
\begin{align*}
\E \left\{ ( \xi_{(i)} - i_n )^2 + ( \xi_{(j)} - j_n )^2 \right\}^{ \alpha\beta / 2 }
& \le \left( \var \xi_{(i)}+\var \xi_{(j)} \right)^{ \alpha \beta/ 2 }
\leq \left\{ 2(n+2) \right\}^{ - \alpha \beta/ 2 },
\end{align*}
with the latter inequality via $ \var \xi_{(i)} = i_n ( 1 - i_n ) / (n+2) \leq (1/4) / (n+2)$. This proves the first result. For the second, we use Lemma~\ref{normbound} and a chaining argument, since $\bar f $ is piecewise-constant on blocks:
\begin{align*}
\left| {\bar f}\left(\xi_{(i)}, \xi_{(j)}\right) - {\bar f}\left(i_n,j_n \right) \right|
& \leq \left| \left( \bar f - f \right) \left(\xi_{(i)}, \xi_{(j)}\right) \right|
+ \left| f\left(\xi_{(i)},\xi_{(j)}\right) - f\left(i_n,j_n \right) \right|
\\ & \quad + \left| \left( f - \bar f \right) \left(i_n,j_n \right) \right|
\\ & \leq \left| f\left(\xi_{(i)},\xi_{(j)}\right) - f\left(i_n,j_n \right) \right| + \textstyle 2 M \left( \sqrt{2} \max_{1 \leq a \leq k } h_a / n \right)^\alpha .
\end{align*}
Finally, $f \in \operatorname{\textrm{H\"older}}^\alpha(M)$ implies for $ (x,y)\in \left( \frac{i-1}{n},\frac{i}{n} \right) \times \left( \frac{j-1}{n},\frac{j}{n} \right) $ the uniform upper bound for $1 \leq i , j \leq n$:
\begin{align*}
\left| {f}\left(i_n,j_n \right)-{f}\left(x,y\right)\right|&\le M \sup_{ (x,y)\in \left( \frac{i-1}{n},\frac{i}{n} \right) \times \left( \frac{j-1}{n},\frac{j}{n} \right) }
\bigl\{ \left(i_n-x\right)^2+\left(j_n-y\right)^2\bigr\}^{\alpha/2}
\\ & \le M \left[\max_{ 1 \le i \le n} \left\{ 2 \max \left( \frac{ (i_n)^2 }{ n^2 } , \frac{ (1-i_n)^2 }{ n^2 } \right) \right\} \right]^{\alpha/2} .
\end{align*}
\begin{equation*}
\vspace{-2\baselineskip}
\end{equation*}
\end{proof}

\begin{lemma}\label{divergenceMC2}
Let $ f $ be a symmetric $\operatorname{\textrm{H\"older}}^\alpha(M)$ function on $(0,1)^2$, with stepfunction approximation $ {\bar f}\left(x,y; h\right) = {\bar f}_{ H^{-1}(x) H^{-1}(y) } $, and let $\{\xi_{(i)}\}_{i=1}^n$ be an ordered sample of independent $\operatorname{Uniform}(0,1)$ random variables. Then whenever $\rho_n>0$ and $ 0 < \rho_n f\left(x,y\right) < 1 $ everywhere on $(0,1)^2$, 
\begin{multline}
\label{divergenceMC}
\left\{ \rho_n \tbinom{n}{2} \right\}^{-1} \E_{\xi} \sum_{i<j} \D\left\{\rho_n f\left(\xi_{i}, \xi_{j}\right) \,\middle\vert\middle\vert\, \rho_n {\bar f}\left(\xi_{i}, \xi_{j}\right) \right\} \\ \leq 
\frac{ \rho_n M^2 \textstyle \left( \sqrt{2} \max_{1 \leq a \leq k } h_a / n \right)^{2\alpha} }{ \min_{1\le a,b\le k}\left\{\min\left( \rho_n {\bar f}_{ab},1- \rho_n {\bar f}_{ab}\right)\right\} }
\cdot \max_{ 1 \leq a, b \leq k } \left[ 1 + \Delta_{ab} \left\{ 1 + \frac{2}{3} \frac{ 1 + 2 \Delta_{ab} }{ \left( 1 - \Delta_{ab} \right)^3 } \right\} \right] ,
\end{multline}
where for $f\vert_{\omega_{ab}}$ the restriction of $f$ to $\omega_{ab} = \left[ H(a-1) , H(a) \right) \times \left[ H(b-1) , H(b) \right) $, we define
\begin{equation}
\label{eq:delta-bnd} 
\Delta_{ab} = \frac{ \rho_n \left\| f\vert_{\omega_{ab}} - {\bar f}_{ab} \right\|_{ L_\infty \left( \omega_{ab} \right) } }{ \min \left( \rho_n {\bar f}_{ab} , 1 - \rho_n {\bar f}_{ab} \right) } 
\leq \frac{ \rho_n M \left( \sqrt{2} \max_{1 \leq a \leq k } h_a / n \right)^\alpha }{ \min \left( \rho_n {\bar f}_{ab} , 1 - \rho_n {\bar f}_{ab} \right) } , \quad 1 \leq a , b \leq k .
\end{equation}
\end{lemma}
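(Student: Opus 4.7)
The plan is to obtain a pointwise (in $\xi$) upper bound on each summand by Taylor-expanding the Bernoulli Kullback--Leibler divergence $\D(\rho_n f(\xi_i,\xi_j)\,\|\,\rho_n\bar f(\xi_i,\xi_j))$ about its block-mean argument $\rho_n\bar f_{ab}$. Fix a realization of $\xi$ and a pair $(i,j)$ with $i<j$; let $(a,b)$ be the unique block with $(\xi_i,\xi_j)\in\omega_{ab}$, so that $\bar f(\xi_i,\xi_j)=\bar f_{ab}$, and write $p=\rho_n f(\xi_i,\xi_j)$ and $p'=\rho_n\bar f_{ab}$. Lemma~\ref{normbound} applied to $f|_{\omega_{ab}}$ gives $|p-p'|\leq\rho_n\|f|_{\omega_{ab}}-\bar f_{ab}\|_{L_\infty(\omega_{ab})}\leq\rho_n M(\sqrt{2}\max_a h_a/n)^\alpha$; the definition of $\Delta_{ab}$ in~\eqref{eq:delta-bnd} then simultaneously yields the claimed upper bound on $\Delta_{ab}$ and the convenient restatement $|p-p'|\leq\Delta_{ab}\min(p',1-p')$.

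The second step is the Taylor expansion of $p\mapsto\D(p\,\|\,p')$ about $p=p'$: the first derivative vanishes, the second equals $1/\{p'(1-p')\}$, and successive derivatives involve increasing negative powers of $p'$ and $1-p'$. For any intermediate $\tilde p$ between $p$ and $p'$, $|\tilde p-p'|\leq|p-p'|\leq\Delta_{ab}\min(p',1-p')$, hence $\min(\tilde p,1-\tilde p)\geq(1-\Delta_{ab})\min(p',1-p')$. Carrying the expansion far enough with Lagrange remainder then produces the pointwise estimate
\begin{equation*}
\D(p\,\|\,p')\leq\frac{(p-p')^2}{2p'(1-p')}\left[1+\Delta_{ab}\left\{1+\frac{2}{3}\frac{1+2\Delta_{ab}}{(1-\Delta_{ab})^3}\right\}\right].
\end{equation*}
Substituting $(p-p')^2\leq\rho_n^2 M^2(\sqrt{2}\max_a h_a/n)^{2\alpha}$ from Lemma~\ref{normbound} and using $p'(1-p')\geq\min(p',1-p')/2$ to switch the denominator from $2p'(1-p')$ to $\min(\rho_n\bar f_{ab},1-\rho_n\bar f_{ab})$ produces, after dividing by $\rho_n$, precisely the $(a,b)$th summand of the right-hand side of~\eqref{divergenceMC}.

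The pointwise bound just derived depends on $\xi$ only through the block index $(a,b)$, so replacing it by its maximum over $(a,b)$ renders it deterministic; summing over the $\binom{n}{2}$ pairs and taking expectation over $\xi$ then yields~\eqref{divergenceMC} after the factor $\binom{n}{2}$ cancels the normalization $\rho_n\binom{n}{2}$ on the left. The main technical obstacle is handling the Taylor remainder carefully enough to obtain the exact correction factor $[1+\Delta_{ab}\{1+(2/3)(1+2\Delta_{ab})/(1-\Delta_{ab})^3\}]$: this can be achieved by a fourth-order expansion with Lagrange remainder, consolidating third- and fourth-order contributions via $\min(\tilde p,1-\tilde p)\geq(1-\Delta_{ab})\min(p',1-p')$, or equivalently by the integral representation $\D(p\,\|\,p')=\int_{p'}^p(p-t)/\{t(1-t)\}\,dt$ combined with a multiplicative lower bound on $t(1-t)$ in terms of $p'(1-p')$ and $\Delta_{ab}$. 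The factor $(1-\Delta_{ab})^{-3}$ requires $\Delta_{ab}<1$; this is the implicit regime of validity of the bound, and is enforced by the rate hypothesis $r_n\to 0$ used by the callers (Lemmas~\ref{taylor-div} and~\ref{comby}) of this lemma.
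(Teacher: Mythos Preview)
Your proposal is correct and follows essentially the same route as the paper: both apply the second Taylor bound of Lemma~\ref{KLDiv} (with $p=\rho_n\bar f_{ab}$, $\delta=\rho_n(f-\bar f)$) to get the pointwise estimate with the exact correction factor, then invoke Lemma~\ref{normbound} to control $|f-\bar f|$ and use $p'(1-p')\geq\tfrac12\min(p',1-p')$ for the denominator. The only cosmetic difference is that the paper first converts the expected sum into the integral $\iint_{(0,1)^2}\rho_n^{-1}\D\{\rho_n f\,\|\,\rho_n\bar f\}\,dx\,dy$ (by i.i.d.\ uniformity and symmetry of $f$) and then decomposes blockwise, whereas you bound each summand pointwise and maximize over blocks before taking expectation; both arrive at the same inequality.
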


\begin{proof}
Since $\{ \xi_i \}_{i=1}^n$ is a random sample of $\operatorname{Uniform}(0,1)$ variates, and $f$ is symmetric, we have
\begin{multline}
\label{eq:KLscaledgraphon}
\left\{ \rho_n \tbinom{n}{2} \right\}^{-1} \E_{\xi} \sum_{i<j} \D\left\{\rho_n f\left(\xi_{i}, \xi_{j}\right) \,\middle\vert\middle\vert\, \rho_n {\bar f}\left(\xi_{i}, \xi_{j}\right) \right\} 
\\ = \iint_{ (0,1)^2 } \hskip-0.6cm \rho_n^{-1} \D\left\{\rho_n f(x, y) \,\middle\vert\middle\vert\, \rho_n {\bar f}\left(x, y\right) \right\}\,dx\,dy . \!
\end{multline}
Let $p=\rho_n \bar f$ and $\delta= \rho_n (f-\bar f)$ pointwise on $(0,1)^2$, in order to apply Lemma~\ref{KLDiv} to the integrand of~\eqref{eq:KLscaledgraphon}, and define the following ratio: $ \Delta_{ab} = \rho_n \left\| f\vert_{\omega_{ab}} - {\bar f}_{ab} \right\|_{ L_\infty \left( \omega_{ab} \right) } / \min \left(\rho_n {\bar f}_{ab} , 1 - \rho_n{\bar f}_{ab} \right)$. 
We may then write
\begin{align*}
& \iint_{ (0,1)^2 } \rho_n^{-1} \D\left\{\rho_n f(x, y) \,\middle\vert\middle\vert\, \rho_n {\bar f}\left(x, y\right) \right\}\,dx\,dy 
\\  & = \sum_{a=1}^k \sum_{b=1}^k \iint_{\omega_{ab}}
\rho_n^{-1} \D\left\{\rho_n f(x, y) \,\middle\vert\middle\vert\, \rho_n \bar f_{ab} \right\}\,dx\,dy 
\\ & \leq \sum_{a=1}^k \sum_{b=1}^k \iint_{\omega_{ab}} \rho_n^{-1}
\frac{ \left| \rho_n f(x, y) - \rho_n \bar f_{ab} \right|^2 }{ 2 \rho_n \bar f_{ab} \left( 1 - \rho_n \bar f_{ab} \right) } 
\left[ 1 + \Delta_{ab} \left\{ 1 + \frac{2}{3} \frac{ 1 + 2 \Delta_{ab} }{ \left( 1 - \Delta_{ab} \right)^3 } \right\} \right]
\,dx\,dy 
\\ &\leq \max_{ 1 \leq a, b \leq k } \left[ \frac{ 1 + \Delta_{ab} \left\{ 1 + \frac{2}{3} \frac{ 1 + 2 \Delta_{ab} }{ \left( 1 - \Delta_{ab} \right)^3 } \right\} }{ 2 \rho_n \bar f_{ab} \left( 1 - \rho_n \bar f_{ab} \right) } \right] \rho_n \| f - \bar f\|^2_{ L_2 \left( ( 0 , 1 )^2 \right) } .
\end{align*}
Our final step is to control the norms $ \left\| f\vert_{\omega_{ab}} - {\bar f}_{ab} \right\|_{ L_\infty \left( \omega_{ab} \right) } $ and $ \| f - \bar f\|^2_{ L_2 \left( ( 0 , 1 )^2 \right) } $ in this bound. To do so, we apply Lemma~\ref{normbound}, which asserts that whenever $ f \in \operatorname{\textrm{H\"older}}^\alpha(M)$, we have for all $1 \leq a , b \leq k$ that
\begin{equation}
\label{eq:normbound}
\left\| f\vert_{\omega_{ab}} - {\bar f}_{ab} \right\|_{ L_\infty \left( \omega_{ab} \right) } \leq \| f - \bar f\|_{ L_2 \left( ( 0 , 1 )^2 \right) } \leq M \textstyle \left( \sqrt{2} \max_{1 \leq a \leq k } h_a / n \right)^\alpha .
\end{equation}
The result follows from~\eqref{eq:normbound}, since by hypothesis $ \max \left( \rho_n {\bar f}_{ab} , 1 - \rho_n {\bar f}_{ab} \right) \geq 1/2 $ for every $(a , b )$, and so
\begin{equation*}
\frac{ \rho_n \| f - \bar f\|^2_{ L_2 \left( ( 0 , 1 )^2 \right) } }{ 2 \rho_n \bar f_{ab} \left( 1 - \rho_n \bar f_{ab} \right) } 
\leq \frac{ \rho_n \| f - \bar f\|^2_{ L_2 \left( ( 0 , 1 )^2 \right) } }{ \min \left( \rho_n {\bar f}_{ab} , 1 - \rho_n {\bar f}_{ab} \right) }
\leq \frac{ \rho_n M^2 \textstyle \left( \sqrt{2} \max_{1 \leq a \leq k} h_a / n \right)^{2\alpha} }{ \min \left( \rho_n {\bar f}_{ab} , 1 - \rho_n {\bar f}_{ab} \right) } . %
\vspace{-\baselineskip}%
\end{equation*}
\end{proof}

\begin{lemma}\label{KLDiv}
Consider the Bernoulli Kullback--Leibler divergence quantities $\D\left(p \,\middle\vert\middle\vert\, p+\delta \right)$ and $\D\left(p+\delta \,\middle\vert\middle\vert\, p \right)$,
where $0 < p < 1$ and $ -p \le \delta \le 1 - p $. If $ \left|\delta\right| < \min\left( p, 1 - p \right) $, then 
the following bounds hold:
\begin{align*}
\tfrac{\left|\D\left(p \,\middle\vert\middle\vert\, p +\delta \right)-\tfrac{\delta^2}{2p(1-p)}\right|}{\delta^2/\left\{2p(1-p)\right\}}&\le 
\tfrac{2}{3}\tfrac{|\delta|}{\min_{}\left(p,1-p\right)}\left(1-\tfrac{|\delta|}{\min_{}\left(p,1-p\right)}\right)^{-3},
\\
\tfrac{\left|\D\left(p+\delta \,\middle\vert\middle\vert\, p \right)-\tfrac{\delta^2}{2p(1-p)}\right|}{\delta^2/\left\{2p(1-p)\right\}}&\le \tfrac{|\delta|}{\min_{}\left(p,1-p\right)}\left\{ 1+
\tfrac{2}{3}
\left(1+
\tfrac{2|\delta|}{\min_{}\left(p,1-p\right)}\right)\left(1-\tfrac{|\delta|}{\min_{}\left(p,1-p\right)}\right)^{-3}\right\}.
\end{align*}
Now consider $\rho_n, f, g > 0$ such that $ 0 < \rho_n f , \rho_n g < 1 $. Then $ \left| f - g \right|^2 \leq 2 f \rho_n^{-1} \D\left( \rho_n f \,\middle\vert\middle\vert\, \rho_n g \right) $.
\end{lemma}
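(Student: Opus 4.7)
The plan is to treat each of the three bounds by Taylor-expanding the Bernoulli KL divergence about its diagonal and controlling the cubic remainder via an explicit bound on the third derivative.

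For the first inequality, I let $f(q) = \D(p \,\|\, q)$ with $p$ fixed. Direct computation gives $f(p) = 0$, $f'(p) = 0$, $f''(p) = [p(1-p)]^{-1}$, and $f'''(q) = -2p/q^3 + 2(1-p)/(1-q)^3$, so Taylor's theorem with integral remainder yields
\[
\D(p \,\|\, p+\delta) - \frac{\delta^2}{2p(1-p)} = \int_0^\delta \frac{(\delta-t)^2}{2}\, f'''(p+t)\, dt.
\]
Writing $m = \min(p, 1-p)$ and WLOG assuming $p = m \leq 1/2$ (the argument is symmetric in $p \leftrightarrow 1-p$), I have for $q$ in the range of integration that $q \geq m(1 - |\delta|/m)$ and $1-q \geq (1-m)(1 - |\delta|/m)$, the latter using $|\delta|/(1-m) \leq |\delta|/m$ since $1-m \geq m$. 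These give
\[
|f'''(q)| \leq \frac{2}{(1-|\delta|/m)^3}\bigl[m^{-2} + (1-m)^{-2}\bigr] \leq \frac{2}{m^2(1-m)(1-|\delta|/m)^3},
\]
where the second step is the elementary inequality $m^{-2} + (1-m)^{-2} \leq [m^2(1-m)]^{-1}$ for $m \leq 1/2$ (which rearranges to $m(2m-1) \leq 0$). Since $\int_0^{|\delta|} (|\delta|-u)^2 du = |\delta|^3/3$, the remainder is bounded by $|\delta|^3/[3m^2(1-m)(1-|\delta|/m)^3]$, and dividing by the leading term $\delta^2/[2m(1-m)]$ yields exactly the claimed ratio.

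The second inequality follows an analogous pattern with $g(x) = \D(x \,\|\, p)$, using $g'(p) = 0$, $g''(p) = [p(1-p)]^{-1}$, and $g'''(x) = -x^{-2} + (1-x)^{-2}$. The slightly more elaborate form of the stated bound, with its additive ``1'' and extra factor $(1 + 2|\delta|/m)$, comes from a term-by-term bound on the power-series expansion $\D(p+\delta\,\|\,p) = \sum_{k\geq 2}\delta^k/[k(k-1)]\cdot[(-1)^k p^{-(k-1)} + (1-p)^{-(k-1)}]$, derived via $(1+x)\log(1+x) = x + \sum_{k \geq 2}(-1)^k x^k/[k(k-1)]$; one $|\delta|/m$ factor is extracted to pair with the ``1'' term, and the residual tail collapses using $(1-x)^{-3} = \sum_{k \geq 0}\binom{k+2}{2} x^k$ to produce the cubic denominator.

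For the third inequality, setting $p = \rho_n f$ and $q = \rho_n g$ reduces the claim to $(p-q)^2 \leq 2p\,\D(p\,\|\,q)$. This follows from the first bound, which gives $\D(p\,\|\,q) \geq \delta^2/[2p(1-p)]\cdot\{1 - \tfrac{2|\delta|}{3m}(1-|\delta|/m)^{-3}\}$, together with $(1-p)^{-1} \geq 1$, so that $\D(p\,\|\,q) \geq \delta^2/(2p)$ in the regime where the tracked perturbation is controlled. The main obstacle will be the algebraic bookkeeping to produce exactly the stated constants in the second bound; the first and third bounds then follow cleanly once the uniform third-derivative bound and the identity $m^{-2} + (1-m)^{-2} \leq [m^2(1-m)]^{-1}$ are in hand.
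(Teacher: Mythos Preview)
Your treatment of the first inequality is essentially the paper's argument: a Taylor expansion with control of the cubic remainder. The paper uses the Lagrange form applied separately to each logarithmic summand (producing the two remainder terms in their displayed expansion), whereas you use the integral form with a uniform bound on $|f'''|$; both routes are correct and yield the stated constant.

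For the second inequality you take a detour through the power series of $(1+x)\log(1+x)$, apparently to reproduce the exact constants in the statement. This is unnecessary: the same third-derivative argument you used for the first part applies here, since $g'''(x)=-x^{-2}+(1-x)^{-2}$ satisfies $|g'''(x)|\le x^{-2}+(1-x)^{-2}\le [m^2(1-m)]^{-1}(1-|\delta|/m)^{-2}$, giving a relative error at most $\tfrac{|\delta|}{3m}(1-|\delta|/m)^{-2}$. That is \emph{stronger} than the stated bound, so it certainly implies it. The paper simply says the second result ``follows similarly'' by a Taylor expansion; your proposed power-series bookkeeping is more labor than either approach requires.

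The genuine gap is in the third inequality. You attempt to deduce $(p-q)^2\le 2p\,\D(p\,\|\,q)$ from the first bound, but that bound only holds under the restriction $|\delta|<\min(p,1-p)$, which is not assumed here, and even within that regime the conclusion does not follow: the first bound gives $\D(p\,\|\,q)\ge \tfrac{\delta^2}{2p(1-p)}(1-E)$ with $E>0$, and using $(1-p)^{-1}\ge 1$ leaves $\D(p\,\|\,q)\ge \tfrac{\delta^2}{2p}(1-E)$, which is strictly weaker than what you need. The paper takes a different and more direct route: writing $h(d)=\D(\rho_n(g+d)\,\|\,\rho_n g)$, one has $h(0)=h'(0)=0$ and $h''(d)=\rho_n\,/\,[(g+d)(1-\rho_n(g+d))]$; bounding this second derivative below by $\rho_n/f$ and integrating twice yields $h(f-g)\ge \rho_n(f-g)^2/(2f)$, which is the claim. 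You should adopt this convexity-and-integration argument rather than trying to bootstrap from the perturbative first bound.
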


\begin{proof}
The first result follows by manipulating a Taylor series expansion of $\D\left(p \,\middle\vert\middle\vert\, p+\delta \right)$ using the Lagrange form of the remainder.
For some $\delta',\delta''$ satisfying $0<\left|\delta'\right|<\left|\delta\right|$ and $0<\left|\delta''\right|<\left|\delta\right|$, we have
\begin{equation}
\label{eq:taylorKLr}
\D\left(p \,\middle\vert\middle\vert\, p+\delta \right)=\tfrac{\delta^2}{2p(1-p)}\left[1+\tfrac{2}{3} \tfrac{\delta}{\min_{}\left(p,1-p\right)} 
\left\{\tfrac{p^2\bigl(1-\tfrac{\delta''}{1-p} \bigr)^{-3}-
(1-p)^2\bigl(1+\tfrac{\delta'}{p} \bigr)^{-3}}{\max_{}\left(p,1-p\right)} \right\}\right]\!.
\end{equation}
The first result then follows by controlling the scaled difference of the remainder terms appearing in~\eqref{eq:taylorKLr}, both of which are non-negative. We upper-bound this difference by the maximum of these two quantities, writing \begin{multline*}
\max\bigl\{p^2\bigl(1-\frac{\delta''}{1-p} \bigr)^{-3} , (1-p)^2\bigl(1+\frac{\delta'}{p} \bigr)^{-3}\bigr\} \\ \leq 
\left\{\max_{}\left(p,1-p\right)\right\}^2 \left\{ 1 - \left|\delta\right| / \min_{}\left(p,1-p\right) \right\}^{-3}.
\end{multline*}
The second result follows similarly, by manipulating a Taylor series expansion of $\D\left(p+\delta \,\middle\vert\middle\vert\, p \right)$. 

The final result follows from rewriting $ \D\left( \rho_n f \,\middle\vert\middle\vert\, \rho_n g \right) $ as $ \D\left( \rho_n (g+d) \,\middle\vert\middle\vert\, \rho_n g \right) $, with $d = f-g$.  We first bound the second derivative of $ \D\left( \rho_n (g+d) \,\middle\vert\middle\vert\, \rho_n g \right) $ in $d$ below by $ \rho_n / f $, and then integrate twice, using that $ \D\left( \rho_n (g+d) \,\middle\vert\middle\vert\, \rho_n g \right) = 0 $ if $ d = 0 $.
\end{proof}

\begin{lemma}\label{partition}
Let $i_n=i/(n+1)$ and $j_n=j/(n+1)$. Then $(i_n,j_n)\in \omega_{a_i b_j}$, where $a_i$ and $b_j$ are defined by
\begin{equation*}
a_i = H^{-1}\left( i/n\right), \quad b_j = H^{-1}\left( j/n\right), \quad 1 \leq a,b \leq k, \,\, 1 \leq i,j \leq n .
\end{equation*}
\end{lemma}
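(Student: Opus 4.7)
The plan is to unpack the definition of the generalized inverse $H^{-1}$ from~\eqref{Hinv} and verify that $(i_n,j_n)$ falls into the claimed rectangle directly. Since the asserted inclusion factors into independent conditions on each coordinate, and the argument on the $x$-axis is identical to that on the $y$-axis, it suffices to prove the one-dimensional statement $H(a_i-1)\le i_n < H(a_i)$ for $a_i=H^{-1}(i/n)$.

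First I would translate the generalized inverse into an explicit two-sided inequality. Because $H$ is a non-decreasing step function taking values in $\{0,h_1/n,(h_1+h_2)/n,\ldots,1\}$ that is right-continuous and constant on each half-open interval $[a-1,a)$, the infimum in~\eqref{Hinv} is attained at an integer. Hence $a_i=H^{-1}(i/n)$ is the smallest integer in $\{1,\ldots,k\}$ satisfying $H(a_i)\ge i/n$, which is equivalent to
\begin{equation*}
H(a_i-1) < \tfrac{i}{n} \le H(a_i).
\end{equation*}

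Next I would convert these bounds into bounds on $i_n=i/(n+1)$. For the upper bound, the trivial inequality $i/(n+1)<i/n\le H(a_i)$ yields $i_n<H(a_i)$. For the lower bound, note that $H(a_i-1)\cdot n$ is an integer (since each $h_a$ is), so $H(a_i-1)<i/n$ implies $nH(a_i-1)\le i-1$, i.e., $H(a_i-1)\le (i-1)/n$. A short algebraic check shows $(i-1)/n < i/(n+1)$ whenever $i\le n$, since this reduces to $i<n+1$. Combining the two gives $H(a_i-1)\le i_n<H(a_i)$, so $i_n\in[H(a_i-1),H(a_i))$.

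Applying the same argument to $j$ with $b_j=H^{-1}(j/n)$ gives $j_n\in[H(b_j-1),H(b_j))$, and the product of the two one-dimensional inclusions is precisely $(i_n,j_n)\in\omega_{a_ib_j}$, as defined in~\eqref{omegaxy}. There is no real obstacle here; the only point requiring a moment's care is the strictness of the lower inequality, which uses the fact that $nH(a_i-1)$ is an integer together with $i\le n$, ruling out the edge case where $i_n$ could coincide with $H(a_i-1)$.
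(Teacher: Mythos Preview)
Your proof is correct and follows essentially the same route as the paper's: both arguments reduce to the one-dimensional inclusion, extract from the definition of $H^{-1}$ the two-sided bound $H(a_i-1)<i/n\le H(a_i)$, and then compare $i/(n+1)$ to $i/n$ and to $(i-1)/n$ via elementary algebra, using that $nH(a_i-1)$ is an integer. Your presentation is slightly more streamlined than the paper's explicit case split, but the content is the same.
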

\begin{proof}
From the definition of $a_i$ we may directly compute
\begin{align*}
H\left\{ a_i \right\}&=H\left\{ H^{-1}\left( i/n\right) \right\} = n^{-1} \!\!\!\! \!\!\!\!\!\!\!\! \sum_{a=1}^{\min\left\{ H^{-1}\left( i/n\right) ,k\right\}} \!\!\!\! h_{a}
\,\,\begin{cases}
=i/n & {\mathrm{if}} \, \sum_{a=1}^{a_i} h_a=i,\\
\ge (i+1)/n & {\mathrm{if}} \, \sum_{a=1}^{a_i} h_a\neq i.
\end{cases}
\end{align*}
We also have that
\begin{align*}
H\left( a_i -1\right) & = H\left\{ H^{-1}\left( i/n\right) -1\right\} 
\\ & = n^{-1} \!\!\!\! \!\!\!\!\!\!\!\! \sum_{a=1}^{\min\left\{ H^{-1}\left( i/n\right)-1,k\right\}} \!\!\!\! h_{a}
\,\,\begin{cases}
=(i-1)/n & {\mathrm{if}} \, \sum_{a=1}^{a_i-1} h_{a}=i-1,\\
\le (i-2)/n & {\mathrm{if}} \, \sum_{a=1}^{a_i-1} h_a\neq i-1 .
\end{cases}
\end{align*}
We have by definition that
$\omega_{a_i b_j}=\left[H\left\{ H^{-1}\left( i/n\right)-1
\right\},H\left\{ H^{-1}\left( i/n\right) \right\}\right)\times
\left[H\left\{ H^{-1}\left( j/n\right)-1
\right\},H\left\{ H^{-1}\left( j/n\right) \right\}\right)$. Since $H(\cdot)$ and its inverse $H^{-1}(\cdot)$ are non-decreasing functions, it follows that $ H\left\{H^{-1}\left( i/n\right)\right\} \geq i/n \geq i/(n+1) = i_n $. Thus the claimed upper bound is respected. Furthermore, for the lower limit, $ H\left\{ H^{-1}\left( i/n\right) -1\right\} \leq (i-1) / n \leq i_n$, as $ (i-1)/n \leq i/(n+1) = i_n \Leftrightarrow i \leq n+1$. Thus the claimed lower bound is also respected, and so by symmetry, we conclude that $(i_n,j_n)\in \omega_{a_i b_j}$.
\end{proof}

\section*{Acknowledgements}
We thank David Choi for helpful insight into blockmodels. Work supported in part by the US Army Research Office under PECASE Award W911NF-09-1-0555 and MURI Award 58153-MA-MUR; by the UK EPSRC under Mathematical Sciences Leadership Fellowship EP/I005250/1, Established Career Fellowship EP/K005413/1 and Developing Leaders Award EP/L001519/1; by the UK Royal Society under a Wolfson Research Merit Award; and by Marie Curie FP7 Integration Grant PCIG12-GA-2012-334622 within the 7th European Union Framework Program.

\bibliographystyle{imsart-nameyear}
\bibliography{nonparametric-graphon-estimation}

\end{document}